\newtheorem{definition}{Definition}
\newtheorem{lemma}{Lemma}
\newtheorem{proposition}{Proposition}
\newtheorem{corollary}{Corollary}
\newtheorem{theorem}{Theorem}
\newcommand{\PreserveBackslash}[1]{\let\temp=\\#1\let\\=\temp}
\newcolumntype{C}[1]{>{\PreserveBackslash\centering}p{#1}}
\newcolumntype{R}[1]{>{\PreserveBackslash\raggedleft}p{#1}}
\newcolumntype{L}[1]{>{\PreserveBackslash\raggedright}p{#1}}
\newcommand{\nn}{\mathbb{N}}
\newcommand{\rr}{\mathbb{R}}
\newcommand{\cc}{\mathbb{C}}
\newcommand{\G}{\ensuremath{\mathcal{G}}}
\renewcommand{\H}{\ensuremath{\mathcal{H}}}
\newcommand{\emp}{\ensuremath{\emptyset}}
\newcommand{\N}{\ensuremath{\mathbb{N}}}
\newcommand{\F}{\ensuremath{\mathcal{F}}}
\newcommand{\A}{\ensuremath{\mathcal{A}}}
\newcommand{\B}{\ensuremath{\mathcal{B}}}
\newcommand{\convhull}{\mathsf{conv.hull}}
\newcommand{\cls}[1]{\overline{#1}}
\newcommand{\inn}[1]{{#1}^\circ}
\newcommand{\lsk}{\mathsf{LS}}
\newcommand{\ld}{\mathsf{LH}}
\newcommand{\dpr}[2]{{#1}{}^T{#2}}
\newcommand{\Rbar}{\overline{\rr}}
\newcommand{\rbar}{\overline{\rr}}
\newcommand{\dual}[2]{\|#2\|_{#1}}
\newcommand{\duall}[2]{\|#2\|_{\color{white} \overline{\color{black} #1}}}
\newcommand{\indi}[1]{\mathbf{1}_{#1}}
\newcommand{\lam}{\lambda}
\newcommand{\di}{\mathrm{d}}
\newcommand{\lm}[2]{\frac{\di{#1}}{\di{#2}}}
\newcommand{\clf}{\mathsf{clf}}
\newcommand{\Sph}{\mathsf{Sph}_{\rr^n}}
\newcommand{\vx}{\vec{x}}
\newcommand{\vy}{\vec{y}}
\newcommand{\vz}{\vec{z}}
\newcommand{\va}{\vec{a}}
\newcommand{\vb}{\vec{b}}
\newcommand{\vc}{\vec{c}}
\newcommand{\vv}{\vec{v}}
\newcommand{\vxs}{\vec{x}^*}
\newcommand{\llsb}{[\![}
\newcommand{\rrsb}{]\!]}
\newcommand{\mprod}[2]{{#1}{#2}}
\newcommand{\veps}{\varepsilon}
\newcommand{\ignore}[1]{}
\newcommand{\heading}[1]{\vspace{2ex}\noindent\textsc{#1} }
\newcommand{\eqn}{\begin{eqnarray}}
\newcommand{\uneq}{\end{eqnarray}}
\newcommand{\eqns}{\begin{eqnarray*}}
\newcommand{\uneqs}{\end{eqnarray*}}
\renewcommand{\Re}{\mathrm{Re}}
\renewcommand{\Im}{\mathrm{Im}}
\newcommand{\Finf}{\ensuremath{F_{\rm{inf}}}}
\newcommand{\myprime}{\mkern-1.5mu\raise1.3ex\hbox{$\scriptstyle\prime$}}
\newcommand{\frpex}[1]{\mkern-4mu\raise2.5ex\hbox{$\scriptstyle{#1}$}}
\newcommand{\primeone}{\mkern-10mu\raise0.85ex\hbox{$\scriptstyle\prime$}}
\newcommand{\primetwo}{\mkern-9mu\raise0.85ex\hbox{$\scriptstyle\prime$}}
\newcommand{\primei}{\mkern-10mu\raise0.85ex\hbox{$\scriptstyle\prime$}}
\newcommand{\primeM}{\mkern-4mu\raise0.85ex\hbox{$\scriptstyle\prime$}}
\newcommand{\primeQ}{\mkern-6mu\raise0.85ex\hbox{$\scriptstyle\prime$}}
\newcommand{\primeR}{\mkern-6mu\raise0.85ex\hbox{$\scriptstyle\prime$}}
\newcommand{\primeL}{\mkern-7mu\raise0.85ex\hbox{$\scriptstyle\prime$}}
\newcommand{\vmu}{\vec{\mu}}
\newcommand{\pmeq}{\operatorname{\lower.2em\hbox{$\overset{\mathrm{pm}}{\sim}$}}}
\newcommand{\vdm}[2]{\vec{D}^{#1}_{#2}}
\newcommand{\vdamu}{\vdm{\valpha}{\mu}}
\newcommand{\vdaa}{\vdm{\valpha}{|\valpha|}}
\newcommand{\vdbb}{\vdm{\vbeta}{|\vbeta|}}
\newcommand{\cwpf}[1]{{\odot}^{#1}}
\newcommand{\cwp}{\cwpf{n}}
\newcommand{\valpha}{\vec{\alpha}}
\newcommand{\vbeta}{\vec{\beta}}
\newcommand{\vgamma}{\vec{\gamma}}
\newcommand{\vtau}{\vec{\tau}}
\newcommand{\LZ}{L^{\mkern-1.5mu\raise0.4ex\hbox{\tiny{$\scriptstyle{\leftarrow}$}}}}
\newcommand{\FinfZ}{\Finf^{\mkern0mu\raise0.4ex\hbox{\tiny{$\scriptstyle{\leftarrow}$}}}}
\newcommand{\mcr}[1]{\langle #1 \rangle}
\newcommand{\ctimes}{\boxtimes}
\newcommand{\ccwpf}[1]{{\ctimes}^{#1}}
\newcommand{\ccwp}{\ccwpf{2n}}
\newcommand{\domc}[3]{\llsb {#1} #2 {#3}\hspace{0.1ex}\rrsb}
\newcommand{\Dins}[2]{\domc{\dpr{\vxs}{#1}}{#2}{0}}
\newcommand{\Diamu}[1]{\Dins{\vdamu}{#1}}
\newcommand{\Diaa}[1]{\Dins{\vdaa}{#1}}
\newcommand{\vdbnu}{\vdm{\vbeta}{\nu}}
\newcommand{\vdabmunu}{\vdm{\valpha\times\vbeta}{\mu\times\nu}}
\newcommand{\vdrabmunu}{\vdm{\mcr{\valpha\times\vbeta}}{\mu\times\nu}}
\newcommand{\vdramu}{\vdm{\mcr{\valpha}}{\mu}}
\newcommand{\vdrbnu}{\vdm{\mcr{\vbeta}}{\nu}}
\newcommand{\dabmunu}{\vdm{\alpha\times\beta}{\mu\times\nu}}
\newcommand{\damu}{\vdm{\alpha}{\mu}}
\newcommand{\dbnu}{\vdm{\beta}{\nu}}
\newcommand{\dhaus}{d_{\mathrm{H}}}
\newcommand{\cube}{\mathsf{Cube}}
\newcommand{\Unit}{{\mathrm{U}^n_{\scriptscriptstyle{\|\|_1}}}}
\newcommand{\vu}{\vec{u}}
\newcommand{\vlambda}{\vec{\lambda}}
\newcommand{\vomega}{\vec{\omega}}
\newcommand{\Scon}{S_{\rm{con}}}
\newcommand{\Satm}{S_{\rm{atm}}}
\newcommand{\Tcon}{T_{\rm{con}}}
\newcommand{\Tatm}{T_{\rm{atm}}}
\newcommand{\Sconp}{S'_{\rm{con}}}
\newcommand{\Satmp}{S'_{\rm{atm}}}
\newcommand{\Tconp}{T'_{\rm{con}}}
\newcommand{\Tatmp}{T'_{\rm{atm}}}
\renewcommand{\vec}[1]{{\bm{#1}}}
\author{John P. Steinberger\footnote{Chief Scientist, Auki Labs.}, Zhe Zhang\footnote{IIIS, Tsinghua University.}}
\title{Mathematical Foundations of Lorenz Hulls}
\title{Measure- and Convexity-Theoretic Foundations of Lorenz Hulls}
\title{Convexity-Theoretic Foundations of Lorenz Hulls}
\title{Measure-Theoretic Foundations of \\Lorenz Hulls}
\title{A Theory of \\Lorenz Hulls}
\title{Lorenz Hulls}
\title{A Measure-Theoretic Treatment \\ of Lorenz Hulls}
\title{Theoretical Underpinnings of\\ Lorenz Hulls}
\title{On The Measure-Theoretic Underpinnings of \\ Lorenz Curves and Lorenz Hulls}
\title{Measure Theory of \\ Lorenz Hulls}
\title{On The Measure-Theoretic Underpinnings of Lorenz Hulls}
\title{A Measure Theory of \\ Lorenz Hulls}
\title{A Self-Contained Theory of \\ Lorenz Hulls}
\title{Mathematical Underpinnings of \\ Lorenz Hulls}
\title{A Mathematical Theory of \\Lorenz Hulls}
\title{Lorenz Curves and Lorenz Hulls: Measure Theory and Mathematical Unpderpinnings}
\title{A Distributive and Associative Product for Zonoids Based on Product Measures}
\title{A Product on Zonoids and Lorenz Hulls}
\title{A Product on Lorenz Hulls, Zonoids, and Vector Measure Ranges}
\begin{document}
\begin{spacing}{1.1}
\maketitle \thispagestyle{empty}

\pagestyle{plain}


\begin{abstract}
A \emph{Lorenz hull} is the convex hull of the range of an $n$-dimensional
vector of finite signed measures defined on a common measurable
space. We show that the set of $n$-dimensional Lorenz hulls
is endowed with a natural product that is commutative, associative,
and distributive over Minkowski sums. The same holds with “zonoid”
in place of “Lorenz hull” as the two concepts give rise to the
same set of subsets of $\rr^n$.
The product is defined via the common notion of a product measure.
\end{abstract}

\section{Introduction}

Let $\vmu = (\mu_1, \ldots, \mu_n)$ an $n$-tuple of
signed finite (and hence bounded) measures on a common
space $(S, \F)$ where $S$ is a ground set and $\F$
is a $\sigma$-algebra of subsets of $S$.
We refer to $\vmu$ as a \emph{finite signed vector measure}.
The \emph{Lorenz hull} of $\vmu$, denoted $\ld(\vmu)$, is the convex hull of the range of $\vmu$;
i.e.,
$$
\ld(\vmu) := \convhull(\{\vmu(A): A \in \F\})
$$
where $\convhull(F)$ is the convex hull of a bounded set $F \subseteq \rr^n$.
We also refer to the range of $\vmu$ itself as the \emph{Lorenz skeleton}
of $\vmu$, denoted $\lsk(\vmu)$; i.e., $\lsk(\vmu) := \{\vmu(A): A \in \F\}$
and $\ld(\vmu) = \convhull(\lsk(\vmu))$.

In relation to previous terminology,
Harremoës \cite{Harremoes2004}
refers
to $2$-dimensional Lorenz hulls as \emph{Lorenz diagrams}.
Moreover the Lorenz skeleton of a tuple of continuous measures is well-known
as a \emph{zonoid} \cite{Bolker1969Class, Schneider1983Zonoids}.
In the latter case, by a classical theorem of Lyapunov \cite{Lyapunov1940Ser},
the Lorenz skeleton is convex, and, therefore, the Lorenz hull and the Lorenz
skeleton coincide.
Moreover, while a Lorenz hull is a nominally broader concept
than a zonoid one can show that for every finite signed vector
measure $\vmu$ there is some finite signed vector measure $\vmu'$ with continuous
elements such that $\ld(\vmu) = \lsk(\vmu')$,
i.e.,
every Lorenz hull also happens to be a zonoid \cite{Bolker1969Class}.
For the type of
theorem that we present below, however,
the strongest statement is obtained by
allowing the largest possible class of vector measures to be counted
as part of the “preimage” of the hull/zonoid, so we
enlarge the scope of the discussion to Lorenz hulls.
We note that the “Lorenz”
moniker enters into the scene via the \emph{Lorenz curve}, popular
in the social sciences, that can be characterized as the lower boundary of a two-dimensional Lorenz hull,
more specifically the lower boundary of the Lorenz hull of a pair of nonnegative
(i.e., unsigned) measures on a common space.\footnote{The
\emph{Gini coefficient} of a Lorenz curve can also be given an elegant
description in terms of Lorenz hull, being the area of the hull.}
Koshevoy \cite{Koshevoy1995} seems to be the first author to
draw a connection between zonoids and Lorenz curves.

Our main result is to show the existence of a natural product,
based off of the common notion of a product measure, on
the set of $n$-dimensional Lorenz hulls (zonoids).
The construction is implicit in the following statement:

\begin{theorem}\label{thm:mainintro}
    Let $H_1$, $\dots$, $H_k$ be Lorenz hulls.
    Let $\vmu^1$, $\ldots$, $\vmu^k$ where
    $\vmu^j = (\mu_1^j, \dots, \mu_n^j)$ be such that
    $H_j = \ld(\vmu^j)$ for $j = 1, \dots, k$.
    Then $\ld(\vmu^{\times})$ depends only on $H_1$, $\dots$,
    $H_k$ and not on the choice of $\vmu^1$, $\dots$, $\vmu^k$
    where $\vmu^\times := (\mu_1^\times, \dots, \mu_n^\times)$,
    where $\mu_i^\times := \mu_i^1 \times \dots \times \mu_i^k$
    for $i = 1, \dots, n$.
\end{theorem}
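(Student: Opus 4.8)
The plan is to reduce to the binary case and then settle it with a support-function computation. First I would note that because the product of finite measures is associative (under the canonical identification of iterated product spaces), one has $\mu_i^1\times\cdots\times\mu_i^k=(\mu_i^1\times\cdots\times\mu_i^{k-1})\times\mu_i^k$ for each $i$, so that $\vmu^\times=\vmu^{[1,k-1]}\times\vmu^k$ with $\vmu^{[1,k-1]}:=(\mu_i^1\times\cdots\times\mu_i^{k-1})_{i}$. Granting well-definedness for two factors, an induction on $k$ finishes the job: $\ld(\vmu^{[1,k-1]})$ depends only on $H_1,\dots,H_{k-1}$, and then $\ld(\vmu^\times)$ depends only on that hull and on $H_k$. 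Thus it suffices to prove the following: if $\ld(\vmu)=\ld(\tilde\vmu)$ and $\ld(\vnu)=\ld(\tilde\vnu)$ (where $\vmu,\tilde\vmu$ live on $(S_1,\F_1)$ and $\vnu,\tilde\vnu$ on $(S_2,\F_2)$), then $\ld(\vmu\times\vnu)=\ld(\tilde\vmu\times\tilde\vnu)$.

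Since a compact convex body is determined by its support function $h_H(\vx)=\sup_{\vy\in H}\dpr{\vx}{\vy}$, I would work entirely with support functions. Picking control measures $\lambda=\sum_i|\mu_i|$ and $\rho=\sum_i|\nu_i|$ and Radon--Nikodym densities $\vec f=d\vmu/d\lambda$, $\vec g=d\vnu/d\rho$ (with $|f_i|,|g_i|\le 1$), the identity $\dpr{\vx}{\vmu(A)}=\int_A\dpr{\vx}{\vec f}\,d\lambda$ is maximized over $A\in\F_1$ at $A=\{\dpr{\vx}{\vec f}>0\}$, giving $h_{\ld(\vmu)}(\vx)=\int_{S_1}(\dpr{\vx}{\vec f})^+\,d\lambda$, and similarly for $\vnu$. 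Componentwise, $\mu_i\times\nu_i$ has density $(s,t)\mapsto f_i(s)g_i(t)$ against the finite product $\lambda\times\rho$, so the vector density of $\vmu\times\vnu$ is the coordinatewise (Hadamard) product $\vec f\odot\vec g$ and
\[h_{\ld(\vmu\times\vnu)}(\vx)=\int_{S_1\times S_2}\big(\dpr{\vx}{\vec f(s)\odot\vec g(t)}\big)^+\,d(\lambda\times\rho)(s,t).\]

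The engine is the elementary identity $\dpr{\vx}{\vec f\odot\vec g}=\dpr{\vx\odot\vec g}{\vec f}=\dpr{\vx\odot\vec f}{\vec g}$ (each equal to $\sum_i x_if_ig_i$), together with Fubini, which applies since the integrand is bounded by $\sum_i|x_i|$ and $\lambda\times\rho$ is finite. Integrating in $s$ first, the inner integral is exactly $h_{\ld(\vmu)}(\vx\odot\vec g(t))$, whence
\[h_{\ld(\vmu\times\vnu)}(\vx)=\int_{S_2}h_{\ld(\vmu)}\big(\vx\odot\vec g(t)\big)\,d\rho(t),\]
and, symmetrically, $h_{\ld(\vmu\times\vnu)}(\vx)=\int_{S_1}h_{\ld(\vnu)}(\vx\odot\vec f(s))\,d\lambda(s)$. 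These two displays let me change the factors one at a time: the first shows the value is unchanged when $\vmu$ is replaced by any $\tilde\vmu$ with $\ld(\tilde\vmu)=\ld(\vmu)$ (only $h_{\ld(\vmu)}$ enters, while $\vec g,\rho$ are fixed), and the second shows invariance when $\vnu$ is replaced by $\tilde\vnu$ (now holding the $\vmu$-factor fixed and letting only $h_{\ld(\vnu)}$ vary). Chaining the two replacements gives $h_{\ld(\vmu\times\vnu)}=h_{\ld(\tilde\vmu\times\tilde\vnu)}$, which is the binary claim.

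I expect the only genuinely delicate point to be the reduction to support functions: it needs that $\ld(\vmu)$ --- a priori just the convex hull of a bounded, possibly non-closed range --- is a compact convex set, so that its support function determines it. This is where I would lean on the earlier discussion (relative compactness of the range of a bounded vector measure, and the coincidence of Lorenz hulls with zonoids) to pass safely to closed convex hulls. The remaining ingredients --- the Radon--Nikodym representation, the Hadamard-product identity, and Fubini --- are routine, and the $k$-fold reduction requires nothing beyond associativity of product measures.
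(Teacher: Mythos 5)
Your proposal is correct and is essentially the paper's own argument: reduce to $k=2$ via associativity of product measures, compute the reach (support) function of a hull from Radon--Nikodym densities with respect to the total variation as an integral of a positive part, use the Hadamard-product identity together with Fubini to express the reach of the product hull as an iterated integral in which only the inner factor's support function appears, swap factors one at a time by symmetry, and conclude via the fact that closed convex sets are determined by their support functions. The only (cosmetic) difference is that the paper runs this identical computation with an inequality in the middle, proving the stronger inclusion-preservation statement (\autoref{thm:inclusion2intro}, i.e.\ \autoref{prp:containpresprodnd}) and then obtaining equality by mutual inclusion, whereas you specialize directly to the equality case.
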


\noindent
To paraphrase, the hull of a tuple of product measures—all of
same arity, named $k$ above—is determined by the $k$ individual
“index slice” hulls, i.e., the hulls that arise by keeping only
the $j$-th term inside each product, $j = 1, \dots, k$.

Whether a similar theorem holds
for Lorenz skeletons is left as a main open problem.

It is easy to check that the case $k = 2$ of
\autoref{thm:mainintro} implies the general case
and that the binary product defined by the case
$k = 2$
is associative and yields the same $k$-ary product via said
associativity as when the $k$-ary product is defined directly
as it is in \autoref{thm:mainintro}.

We refer to the product defined by \autoref{thm:mainintro} as
the \emph{Lorenz product}, writing $H_1H_2$ for the product of
hulls $H_1$ and $H_2$.

One can easily check from the
definition that the Lorenz product is commutative.
Moreover, writing $H_1 + H_2$ for the Minkowski sum
of Lorenz hulls $H_1$ and $H_2$, and noting that the set
of Lorenz hulls is closed under such sums, we also show:

\begin{theorem}\label{thm:distributionintro}
    $\mprod{H_1}{(H_2 + H_3)} = \mprod{H_1}{H_2} + \mprod{H_1}{H_3}$ for all
    Lorenz hulls $H_1$, $H_2$, $H_3 \subseteq \rr^n$.
\end{theorem}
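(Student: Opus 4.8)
The plan is to translate both operations appearing in the identity into purely measure-theoretic constructions on the underlying spaces, thereby reducing the claim to the elementary fact that product measures distribute over direct sums of measures. The first ingredient is a dictionary between Minkowski sums of hulls and \emph{direct sums} of vector measures. Suppose $H_2 = \ld(\vnu)$ and $H_3 = \ld(\vlambda)$, with $\vnu = (\nu_1, \dots, \nu_n)$ on $(S_2, \F_2)$ and $\vlambda = (\lambda_1, \dots, \lambda_n)$ on $(S_3, \F_3)$. Form the direct-sum vector measure $\vnu \oplus \vlambda$ on the disjoint union $S_2 \sqcup S_3$ by $(\vnu\oplus\vlambda)(A) := \vnu(A \cap S_2) + \vlambda(A \cap S_3)$. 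As $A$ ranges over measurable subsets, the pieces $A \cap S_2$ and $A \cap S_3$ range independently, so $\lsk(\vnu\oplus\vlambda) = \lsk(\vnu) + \lsk(\vlambda)$ (a Minkowski sum of sets); taking convex hulls and using $\convhull(F + G) = \convhull(F) + \convhull(G)$ yields $\ld(\vnu\oplus\vlambda) = H_2 + H_3$. (This is also the representation witnessing closure of the class of Lorenz hulls under Minkowski sums.)

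With this in hand, fix any representative $H_1 = \ld(\vmu)$, $\vmu = (\mu_1, \dots, \mu_n)$ on $(S_1, \F_1)$. By \autoref{thm:mainintro} the Lorenz product is independent of the chosen preimages, so I may compute $\mprod{H_1}{(H_2 + H_3)}$ using the representatives $\vmu$ and $\vnu\oplus\vlambda$ just constructed:
\[
\mprod{H_1}{(H_2 + H_3)} = \ld\bigl(\vmu \times (\vnu \oplus \vlambda)\bigr),
\]
the coordinatewise product vector measure whose $i$-th entry is $\mu_i \times (\nu_i \oplus \lambda_i)$ on $S_1 \times (S_2 \sqcup S_3)$.

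The crux is the distributivity of the product measure over the direct sum. Under the canonical measurable isomorphism $S_1 \times (S_2 \sqcup S_3) \cong (S_1 \times S_2) \sqcup (S_1 \times S_3)$, I claim that for each $i$,
\[
\mu_i \times (\nu_i \oplus \lambda_i) \;=\; (\mu_i \times \nu_i) \oplus (\mu_i \times \lambda_i).
\]
Both sides are finite signed measures that agree on the measurable rectangles $E \times F$ (each evaluating to $\mu_i(E)\,(\nu_i(F\cap S_2) + \lambda_i(F\cap S_3))$), and these rectangles form a $\pi$-system generating the product $\sigma$-algebra; equality on all measurable sets then follows from uniqueness of extension, applied to each part of the Jordan decomposition to handle the signs. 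Granting the claim coordinatewise, the vector measure $\vmu\times(\vnu\oplus\vlambda)$ is, up to an isomorphism of the base space that preserves the range and hence the hull, the direct sum $(\vmu\times\vnu) \oplus (\vmu\times\vlambda)$. Applying the dictionary of the first paragraph in reverse,
\[
\ld\bigl((\vmu\times\vnu) \oplus (\vmu\times\vlambda)\bigr) = \ld(\vmu\times\vnu) + \ld(\vmu\times\vlambda) = \mprod{H_1}{H_2} + \mprod{H_1}{H_3},
\]
which is the desired identity.

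The conceptual content is light; the care lies in the measure-theoretic bookkeeping. The main obstacle I anticipate is verifying the displayed distributivity identity rigorously for \emph{signed} measures—ensuring the product-measure construction behaves well under Jordan decomposition and that agreement on rectangles genuinely extends to the full product $\sigma$-algebra—together with confirming that the canonical isomorphism $S_1 \times (S_2 \sqcup S_3) \cong (S_1 \times S_2) \sqcup (S_1 \times S_3)$ is measurable in both directions, so that ranges, and therefore Lorenz hulls, are preserved. Once these routine facts are settled, the theorem follows by the chain of equalities above.
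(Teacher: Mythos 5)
Your proposal is correct and takes essentially the same route as the paper: the paper likewise represents the Minkowski sum via the direct sum of measures on disjoint ground sets (\autoref{prp:ldsum}, proved exactly as you do via $\lsk(\valpha \oplus \vbeta) = \lsk(\valpha) + \lsk(\vbeta)$ and \autoref{prp:hullsum}), proves the measure-level distributivity $(\valpha \oplus \vbeta) \times \vtau = (\valpha \times \vtau) \oplus (\vbeta \times \vtau)$ by checking agreement on rectangles and invoking \autoref{lem:signedmeasuresagree} (\autoref{prp:measuredistrb}), and concludes by well-definedness of the Lorenz product. The only cosmetic differences are that your signed-measure uniqueness step is precisely the paper's \autoref{lem:signedmeasuresagree} (your ``Jordan decomposition'' phrasing needs the standard rearrangement into positive measures, which that lemma packages), and your ``canonical isomorphism'' $S_1 \times (S_2 \sqcup S_3) \cong (S_1 \times S_2) \sqcup (S_1 \times S_3)$ is in fact a literal equality of ground sets, so nothing further needs verifying there.
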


\begin{theorem}\label{thm:inclusionintro}
    $H_1 \subseteq H_3$, $H_2 \subseteq H_4 \implies \mprod{H_1}{H_2} \subseteq \mprod{H_3}{H_4}$
    for all Lorenz hulls $H_1$, $H_2$, $H_3$, $H_4 \subseteq \rr^n$.
\end{theorem}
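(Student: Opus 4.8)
The plan is to pass to support functions and reduce the two–sided statement to a one–sided (single–argument) monotonicity. Recall that every Lorenz hull is a compact convex set (indeed a zonoid), so it is determined by its support function $h_K(\vu) := \sup_{\vec{x} \in K}\langle \vu, \vec{x}\rangle$, and for such sets $K_1 \subseteq K_2$ holds iff $h_{K_1}(\vu) \le h_{K_2}(\vu)$ for every $\vu \in \rr^n$. Using commutativity of the Lorenz product and transitivity of inclusion, it suffices to prove monotonicity in a single fixed factor, namely that $H \subseteq H' \implies \mprod{G}{H} \subseteq \mprod{G}{H'}$ for all Lorenz hulls $G, H, H'$; granting this, the claim follows from the chain
\[
\mprod{H_1}{H_2} = \mprod{H_2}{H_1} \subseteq \mprod{H_2}{H_3} = \mprod{H_3}{H_2} \subseteq \mprod{H_3}{H_4},
\]
where the first inclusion applies single–factor monotonicity with isolated factor $G=H_2$ and $H_1 \subseteq H_3$, and the second with $G = H_3$ and $H_2 \subseteq H_4$.

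The heart of the argument is an integral representation of the support function of a product that isolates one factor. Fix a representation $\vmu = (\mu_1,\dots,\mu_n)$ of $G$ on $(S,\F)$ and representations $\vnu,\vnu'$ of $H,H'$ on a common space $(T,\G)$; then $\mprod{G}{H} = \ld(\vmu\times\vnu)$ and $\mprod{G}{H'} = \ld(\vmu\times\vnu')$ on $(S\times T, \F\otimes\G)$ with $(\vmu\times\vnu)_i = \mu_i\times\nu_i$, and by \autoref{thm:mainintro} these are exactly the products, independent of the choices. Set $\mu := \sum_i |\mu_i|$, write $f_i = \lmf{\mu_i}{\mu}$ for the Radon–Nikodym densities, and collect $\vec{f}(s) = (f_1(s),\dots,f_n(s))$. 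For $C \in \F\otimes\G$ with sections $C_s = \{t : (s,t)\in C\}$, Fubini for finite signed measures gives
\[
\langle \vu,\, (\vmu\times\vnu)(C)\rangle \;=\; \sum_i u_i (\mu_i\times\nu_i)(C) \;=\; \int_S \langle \vu\odot\vec{f}(s),\; \vnu(C_s)\rangle \,\di\mu(s),
\]
where $\odot$ is the coordinatewise product. Optimizing the integrand over the independently choosable sections and reassembling near–optimal sections into a single $C$ by measurable selection yields
\[
h_{\mprod{G}{H}}(\vu) \;=\; \int_S h_H\!\left(\vu\odot\vec{f}(s)\right)\,\di\mu(s),
\]
and identically for $H'$ with the \emph{same} data $(\mu,\vec{f})$.

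Single–factor monotonicity is now immediate: since $H \subseteq H'$ gives $h_H(\vec{w})\le h_{H'}(\vec{w})$ for every $\vec{w}\in\rr^n$, applying this with $\vec{w} = \vu\odot\vec{f}(s)$ and integrating yields $h_{\mprod{G}{H}}(\vu)\le h_{\mprod{G}{H'}}(\vu)$ for all $\vu$, hence $\mprod{G}{H}\subseteq\mprod{G}{H'}$. Combined with the reduction of the first paragraph, this proves the theorem.

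The one genuinely nontrivial point is the lower bound implicit in the displayed formula. The upper bound $h_{\mprod{G}{H}}(\vu)\le \int_S h_H(\vu\odot\vec{f}(s))\,\di\mu(s)$ is free, from the sectionwise estimate $\langle \vu\odot\vec{f}(s),\vnu(C_s)\rangle \le h_H(\vu\odot\vec{f}(s))$; the matching lower bound requires building one measurable $C$ whose sections are simultaneously near–maximizing, i.e.\ a measurable selection of near–optimal $C_s \in \G$. This is precisely the selection/approximation step already underlying the well–definedness of the product (\autoref{thm:mainintro}), so I expect to invoke the representation established there rather than redo it. I note that the more naive route—realizing $H\subseteq H'$ by a restriction $\nu_i = \nu_i'\,\indi{B}$ of a common representation, which would make the inclusion trivial already at the level of Lorenz skeletons—fails in general (e.g.\ a disk inscribed in a square is not a restriction–subhull of the square), which is why the argument is routed through support functions.
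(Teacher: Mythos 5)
Your proposal is correct and follows essentially the same route as the paper: there, \autoref{thm:inclusionintro} is a direct corollary of \autoref{prp:containpresprodnd}, whose proof likewise reduces to one-sided monotonicity (fixing $\vbeta = \vbeta'$ and arguing symmetrically, as your commutativity chain does) and then establishes precisely your integral identity, in the form
$\dual{\ld(\valpha \times \vbeta)}{\vxs} = \int_T \dual{\ld(\valpha)}{\cwp(\vxs, \vdbnu(t))} \,\nu(\di t)$,
before integrating the pointwise inequality supplied by \autoref{prp:suppconju}. One clarification on the step you flag as the ``one genuinely nontrivial point'': no measurable-selection argument and no appeal to the machinery behind \autoref{thm:mainintro} is needed, because by \autoref{prp:threecases} and \autoref{prp:reachofld} the supremum defining the reach is \emph{attained} at the explicit set $\Dins{\vdabmunu}{>} = \{(s,t) : \dpr{\vxs}{\vdabmunu(s,t)} > 0\}$, which is measurable by construction and whose sections are exactly the per-point optimal sets; the lower bound in your displayed formula then falls out of Fubini with no approximation. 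This is also the logically cleaner dependency, since in the paper \autoref{thm:mainintro} is itself a corollary of this same computation rather than a prior input to it.
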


\noindent
I.e., the Lorenz product is distributive and “inclusion-preserving”.

One can also easily check that the Lorenz product
has a (necessarily unique) multiplicative
identity $\convhull(\{\mathbf{0}, \mathbf{1}\})$ where $\mathbf{0} = (0, \dots, 0)$,
$\mathbf{1} = (1, \dots, 1) \in \rr^n$ and that the only Lorenz
hulls with a multiplicative inverse are those of the form
$\convhull(\{\mathbf{0}, (x_1, \dots, x_n)\})$ where
$x_i \ne 0$ for $i = 1, \dots, n$. (More precisely, one can easily argue
that the presence of
non-codirectional points in a hull $H_1$ implies the presence
of non-codirectional points in any hull of the form $H_1H_2$
such that $H_1H_2 \ni \mathbf{1}$, e.g..)

Interestingly,
our results also extend to complex-valued
measures, as sketched in \autoref{sec:complex}, but even further generalizations
are not pursued here.

\heading{Historical work on zonoids.}
In a famous theorem, Lyapunov \cite{Lyapunov1940Ser} (1940) proved that
Lorenz skeletons are closed and that Lorenz hulls of tuples of continuous (a.k.a., “non-atomic”)
vector measures are convex.
A simplified proof of Lyapunov's theorem was presented in English by Halmos \cite{Halmos1948TheRange}, published in 1948.
Lindenstrauss \cite{Lindenstrauss1966AShort} also gave an elegant short proof
of the closedness and convexity of Lorenz skeletons of continuous vector measures
in 1966 from a functional analysis perspective, cited by Rudin \cite{Rudin1976Principles}.
Rickert \cite{Rickert1967TheRange} showed
a bijection between zonoids and measures on certain “standard” spaces,
specifically the projective space.
In 1969, Bolker \cite{Bolker1969Class} wrote a first survey of results on zonoids
(also introducing the term), where in particular it is shown that every Lorenz hull\footnote{There is no dedicated term to denote the concept of a Lorenz
hull, however, until the afore-mentioned paper by Harremoës \cite{Harremoes2004}.}
is a zonoid.
A survey including work up to the early 1980s is given by Schneider and Weil \cite{Schneider1983Zonoids}.

After the late 1970s one research theme concerned the approximability
of zonoids by zonotopes, their discrete counterparts, culminating
in a result by Talagrand \cite{Figiel1977TheDimension, Schechtman1987More, Bourgain1989Approximation, Talagrand1990Embedding}.

Two-dimensional zonoids and zonotopes,
and in particular inclusion relationships between these,
can also be related to the important topic of \emph{majorization}
\cite{HLP1929, HLP1934Inequalities, Marshall2011Inequalities}.
See for example works Foster \cite{Foster1985}, Harremoës and Harremoës and van Erden \cite{Harremoes2004, van2010renyi},
and by Koshevoy and Koshevoy and Mosler \cite{Koshevoy1995, Koshevoy1996LorenzZonoid, Koshevoy1997LorenzZonotope}.

\heading{More recent work.}
Work on zonoids has continued apace in recent decades.
See \cite{Aubrun2015Zonoids, Hendrych2022ANote, Lonke2020Characterisation, Lerario2021OnTameness}
for some recent references.
We note that one recurring question has been the issue of
proving that a given set is not a zonoid \cite{Goodey1993Zonoids, Nazarov2008OnTheLocal, Lonke2020Characterisation}.






\heading{Proof Summary.} We establish \autoref{thm:mainintro} by way of the
following more general inclusion-preservation result, that also establishes
\autoref{thm:inclusionintro}:

\begin{theorem}\label{thm:inclusion2intro}
    Let $\valpha$, $\vbeta$, $\valpha'$, $\vbeta'$
    be four $n$-dimensional signed vector measures
    such that $\ld(\valpha) \subseteq \ld(\valpha')$,
    $\ld(\vbeta) \subseteq \ld(\vbeta')$. Then
    $\ld(\valpha \times \vbeta) \subseteq \ld(\valpha' \times \vbeta')$
    where $\valpha \times \vbeta$ is the $n$-dimensional signed vector measure
    whose $i$-th coordinate is the direct product of the $i$-th
    coordinates of $\valpha$ and $\vbeta$, and likewise for
    $\valpha' \times \vbeta'$.
\end{theorem}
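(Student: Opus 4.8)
The plan is to pass to support functions and reduce the set inclusion to a pointwise inequality that can be checked direction by direction. Recall that a Lorenz hull is compact and convex (indeed a zonoid), so for Lorenz hulls $K_1, K_2$ we have $K_1 \subseteq K_2$ if and only if $h_{K_1}(\vx) \le h_{K_2}(\vx)$ for all $\vx \in \rr^n$, where $h_K(\vx) := \sup_{\vec y \in K}\langle \vx, \vec y\rangle$ is the support function. Thus it suffices to establish $h_{\ld(\valpha\times\vbeta)} \le h_{\ld(\valpha'\times\vbeta')}$ pointwise.

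First I would record the generating-measure formula for the support function of a Lorenz hull. Given $\vmu = (\mu_1,\dots,\mu_n)$ on $(S,\F)$, put $\lambda := \sum_i|\mu_i|$ (a finite positive dominating measure) and $\vec f := (\lm{\mu_1}{\lambda},\dots,\lm{\mu_n}{\lambda})$. Because the supremum of a linear functional over a convex hull is attained on the generating set, and $\sup_{A\in\F}\nu(A)$ equals the total positive variation $\nu^+(S)$ for any finite signed measure $\nu$ (Hahn decomposition),
\[
h_{\ld(\vmu)}(\vx) \;=\; \sup_{A\in\F}\langle \vx, \vmu(A)\rangle \;=\; \Big(\sum_i x_i\mu_i\Big)^{\!+}\!(S) \;=\; \int_S \big(\langle \vx, \vec f(s)\rangle\big)^{+}\,\di\lambda(s),
\]
where $(\cdot)^+ = \max(\cdot,0)$.

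The crux is a Tonelli factorization of this formula for a product. Writing $\vec a$ for the $\lambda$-densities of $\valpha$ and $\vec b$ for the $\kappa$-densities of $\vbeta$ (with $\lambda, \kappa$ the dominating measures on the two ground spaces), the $i$-th coordinate $\alpha_i \times \beta_i$ of $\valpha\times\vbeta$ has density $a_i(s)\,b_i(t)$ against $\lambda\times\kappa$. Since $\sum_i x_i a_i(s) b_i(t) = \langle \vx \odot \vec b(t),\, \vec a(s)\rangle$, where $\odot$ is the coordinatewise product, and the integrand below is nonnegative, Tonelli gives
\[
h_{\ld(\valpha\times\vbeta)}(\vx) \;=\; \iint \big(\langle \vx\odot\vec b(t),\,\vec a(s)\rangle\big)^{+}\di\lambda(s)\,\di\kappa(t) \;=\; \int h_{\ld(\valpha)}\big(\vx\odot\vec b(t)\big)\,\di\kappa(t),
\]
the inner $s$-integral being exactly $h_{\ld(\valpha)}$ evaluated at the twisted direction $\vx\odot\vec b(t)$. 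I expect recognizing this inner integral as a support-function value of the factor hull, together with justifying the product density $a_i b_i$ and the use of Tonelli, to be the only delicate point.

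With the factorization in hand the theorem follows in two monotone steps joined by transitivity of $\subseteq$. The hypothesis $\ld(\valpha)\subseteq\ld(\valpha')$ gives $h_{\ld(\valpha)}(\vec y)\le h_{\ld(\valpha')}(\vec y)$ for every $\vec y$; taking $\vec y = \vx\odot\vec b(t)$ and integrating against $\kappa$ yields $h_{\ld(\valpha\times\vbeta)} \le h_{\ld(\valpha'\times\vbeta)}$, i.e.\ $\ld(\valpha\times\vbeta)\subseteq\ld(\valpha'\times\vbeta)$. Performing the same factorization in the opposite order (integrating out the first coordinate first) expresses $h_{\ld(\valpha'\times\vbeta)}(\vx) = \int h_{\ld(\vbeta)}(\vx\odot\vec a'(s))\,\di\lambda'(s)$, with $\vec a'$ the densities of $\valpha'$ against its dominating measure $\lambda'$; then $\ld(\vbeta)\subseteq\ld(\vbeta')$ gives $\ld(\valpha'\times\vbeta)\subseteq\ld(\valpha'\times\vbeta')$. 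Chaining the two inclusions proves $\ld(\valpha\times\vbeta)\subseteq\ld(\valpha'\times\vbeta')$.
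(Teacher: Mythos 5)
Your proposal is correct and follows essentially the same route as the paper's own proof: the paper likewise reduces the inclusion to a comparison of reach (support) functions via a separating-hyperplane fact, expresses the support function of a product hull through the product density $\cwp(\vdamu,\vdbnu)$, applies Fubini to recognize the inner integral as $\dual{\ld(\valpha)}{\vxs_t}$ with $\vxs_t = \cwp(\vxs,\vdbnu(t))$ the twisted direction, and concludes in two symmetric monotone steps (first replacing $\valpha$ by $\valpha'$, then $\vbeta$ by $\vbeta'$). Your positive-part formula $\int_S (\dpr{\vxs}{\vec f})^{+}\,\di\lambda$ is just the paper's integral over the positivity set $\Diamu{>}$ written in different notation, so the two arguments coincide in substance.
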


\noindent
\autoref{thm:mainintro} is easily seen to be a corollary of \autoref{thm:inclusion2intro}
by application of the principle that sets $S$ and $T$ are equal if
and only if $S \subseteq T$ and $T \subseteq S$.

To establish the conclusion of
\autoref{thm:inclusion2intro} we use a separating hyperplane
argument. Because the Lorenz hulls $\ld(\valpha \times \vbeta)$,
$\ld(\valpha' \times \vbeta')$ are closed convex sets, specifically,
it suffices to show that
\begin{equation}\label{eqn:mainsummary}
\sup_{\vz\in \ld(\valpha \times \vbeta)} \dpr{\vxs}{\vz} \leq \sup_{\vz\in \ld(\valpha' \times \vbeta')} \dpr{\vxs}{\vz}
\end{equation}
for every $\vxs \in \rr^n$ in order to show $\ld(\valpha \times \vbeta) \subseteq \ld(\valpha' \times \vbeta')$.
In turn, \eqref{eqn:mainsummary}
is established by
characterizing, for each $\vxs \in \rr^n$,
the optimal $\vz$ for which
the left-hand side
achieves its supremum.
Knowing this characterisation allows us to write the left-hand side
as an integral that can be rewritten as an iterated
integral, given the product structure of the measure space. The assumption
that $\ld(\valpha) \subseteq \ld(\valpha')$
gives an inequality (in fact, the direct analog of \eqref{eqn:mainsummary})
that can be applied to the inner
integral, effectively replacing $\valpha$ by $\valpha'$ at that stage
in the computation, while introducing an inequality; proceeding
symmetrically, one can rewind, reverse the order of integration, and
replace $\vbeta$ by $\vbeta'$. One can also rely on symmetry to reduce
the theorem to the case $\vbeta = \vbeta'$ as a preamble. (As we actually
choose to do.)


\heading{Follow-up work.} We will show in a separate paper
that $m$-th roots of nonnegative Lorenz hulls under the product introduced by this paper
are unique when such roots exist, i.e., “uniqueness of roots”.

\heading{Organization.} We have tried to keep the paper friendly to
non-mathema\-ticians (in the hopes of accommodating computer scientists in particular\footnote{Note that
products of two-dimensional Lorenz diagrams, in the sense defined by this paper,
crop up whenever the distinguishability or “divergence” (according to any
standard metric) of two vectors $(X_1, \dots, X_k)$, $(X_1', \dots, X_k')$, each
consisting of independently sampled random variables, comes under discussion:
The two-dimensional Lorenz hull for the pair of measures induced by the pair $((X_1, \dots, X_k), (X_1', \dots, X_k'))$
is the product of the $k$ two-dimensional Lorenz hulls associated
to the respective pairs $(X_1, X_1')$, $\ldots$, $(X_k, X_k')$.}), resulting
in preliminary background sections on measure theory and convex analysis. We also include
a separate introduction to signed measures in \autoref{sec:signedvectormeasures}.
Familiar readers should be able to start in \autoref{sec:lorenzhulls} and skim backwards
as necessary to find the definitions of nonstandard notations.

\section{Measure Theory} \label{sec:measuretheorey}

We recall the standard elements of measure theory. More details
may be found in Durrett \cite{Durrett2005Probability} and Rudin \cite{Rudin1987Real}.

\begin{definition}\label{def:measpace}
A \emph{measurable space} is a pair $(S, \F)$ where $S$ is a set and $\F$ is a $\sigma$-algebra on $S$,
i.e., $\F$ is a set of subsets of $S$ such that
    \begin{enumerate}[(i)]
        \item $S \in \F$,
        \item if $A \in \F$ then $S \backslash A \in \F$, and
        \item $\bigcup_{i = 1}^{\infty}A_i \in \F$ for any countable collection $\{A_i\}_{i \in \N}$ of elements of $\F$.
    \end{enumerate}
\end{definition}

\begin{definition}\label{def:measure}
    A \emph{measure} on a measurable space $(S, \F)$ is a function $\mu: \F \rightarrow \rr \cup \{\infty\}$ such that
    \begin{enumerate}[(i)]
        \item $\mu(A) \geq 0$ for all $A \in \F$, and
        \item $\mu\big(\bigcup_{i = 1}^{\infty}A_i\big) = \sum_{i=1}^{\infty}\mu(A_i)$ for any collection $\{A_i\}_{i \in \N}$ of pairwise disjoint elements of $\F$.
    \end{enumerate}
    Moreover, $\mu$ is \emph{finite} if $\mu(S)<\infty$ and is \emph{$\sigma$-finite}
    if there exists a sequence $A_1$, $A_2$, $\ldots$ of elements of $\F$
    such that $\mu(A_n) < \infty$ and $\bigcup_n A_n = S$.
\end{definition}

\begin{definition}
    A \emph{measure space} is a triple $(S, \F, \mu)$ where $(S, \F)$ is a measurable space
    and $\mu$ is a measure on $(S, \F)$.
\end{definition}

\begin{definition}\label{def:meafunction}
    Let $(S, \F)$ and $(S', \F')$ be two measurable spaces.
    A function $g : S \rightarrow S'$ is \emph{measurable
    (with respect to the $\sigma$-algebras $\F$ and $\F'$)}
    if for any $A \in \F'$, $g^{-1}(A) \coloneqq \{ s \in S \,:\, g(s) \in A \} \in \F$.
\end{definition}

\begin{definition}\label{def:generated}
    A set system $\G$ of subsets of $S$ \emph{generates} a $\sigma$-algebra $\F$
    of $S$ if $\F$ is the smallest $\sigma$-algebra of $S$ containing $\G$.
\end{definition}

\noindent
As commonly pointed out, the notion of a ``smallest'' $\sigma$-algebra appearing in \autoref{def:generated} is
well-defined since the intersection of an arbitrary collection of $\sigma$-algebras
of $S$ is a $\sigma$-algebra of $S$.

\begin{definition}\label{def:prodspace}
    The \emph{product} $(S, \F) \times (T, \G)$ of two measurable spaces $(S, \F)$, $(T, \G)$
    is the measurable space $(S \times T, \H)$ where $\H$ is the $\sigma$-algebra on $S \times T$
    generated by sets of the form $F \times G$, $F \in \F$, $G \in \G$.
\end{definition}

\noindent
Given a set $S$, a measurable space $(T, \G)$, and a function $f: S \to T$,
one can check that $f^{-1}(\G) \coloneqq \{f^{-1}(B) \,:\, B \in \G\}$ is a $\sigma$-algebra on $S$.
Moreover, if a set system $\A$ of subsets of $T$ generates $\G$, then $f^{-1}(\A)$ generates $f^{-1}(\G)$.
The next lemma follows from this observation.

\begin{lemma}\label{lem:preimagegeneratespreimage}
    Let $(S, \F)$ and $(T, \G)$ be two measurable spaces.
    Let $\A$ be a subset of $\G$ which generates $\G$.
    If a function $f: S \to T$ is such that $f^{-1}(A) \in \F$ for any $A \in \A$,
    then $f$ is measurable with respect to $\F$ and $\G$.
\end{lemma}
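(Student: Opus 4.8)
The plan is to deduce the lemma directly from the two-part observation recorded just before its statement, namely that $f^{-1}(\G) = \{f^{-1}(B) : B \in \G\}$ is a $\sigma$-algebra on $S$ and that $f^{-1}(\A)$ generates $f^{-1}(\G)$ whenever $\A$ generates $\G$. Granting these, the argument is purely set-theoretic and short, so most of the work is really bookkeeping.

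First I would unwind the hypothesis: the assumption that $f^{-1}(A) \in \F$ for every $A \in \A$ says precisely that $f^{-1}(\A) \subseteq \F$. Next, by the observation, $f^{-1}(\A)$ generates $f^{-1}(\G)$; that is, $f^{-1}(\G)$ is the smallest $\sigma$-algebra on $S$ containing $f^{-1}(\A)$, in the sense of \autoref{def:generated}. Since $\F$ is itself a $\sigma$-algebra on $S$ that contains $f^{-1}(\A)$, minimality forces $f^{-1}(\G) \subseteq \F$. Finally, for an arbitrary $B \in \G$ we have $f^{-1}(B) \in f^{-1}(\G) \subseteq \F$, which is exactly the measurability of $f$ with respect to $\F$ and $\G$ demanded by \autoref{def:meafunction}.

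The only genuine content hides inside the cited observation, and that is where I would expect the one subtle point. Verifying that $f^{-1}(\G)$ is a $\sigma$-algebra is routine, since preimage commutes with complementation and countable unions. The claim that $f^{-1}(\A)$ generates $f^{-1}(\G)$ deserves more care and is best handled by a \emph{good sets} argument. One inclusion is immediate: $f^{-1}(\A) \subseteq f^{-1}(\G)$ and the latter is a $\sigma$-algebra, so the $\sigma$-algebra generated by $f^{-1}(\A)$ is contained in $f^{-1}(\G)$. For the reverse inclusion, I would consider the collection $\{B \subseteq T : f^{-1}(B) \in \sigma(f^{-1}(\A))\}$, where $\sigma(f^{-1}(\A))$ denotes the $\sigma$-algebra generated by $f^{-1}(\A)$, check that it is a $\sigma$-algebra containing $\A$, conclude that it therefore contains $\G$, and read off that $f^{-1}(B) \in \sigma(f^{-1}(\A))$ for every $B \in \G$. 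Since the excerpt already grants this observation, I would simply invoke it and keep the proof of the lemma itself to the three-line chain above.
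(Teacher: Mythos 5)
Your proposal is correct and follows the paper's own route exactly: the paper derives the lemma from the preceding observation that $f^{-1}(\G)$ is a $\sigma$-algebra and that $f^{-1}(\A)$ generates it, and your three-line minimality argument (together with the good-sets verification of the observation) is precisely the intended deduction.
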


\noindent
As an application of \autoref{lem:preimagegeneratespreimage}, continuous functions are measurable with respect to Borel (see below) sets.
For another example that will be used later, if functions $g_i$ from $(S, \F)$ to $(T_i, \G_i)$, $1 \leq i \leq n$, are measurable,
then $f: S \to \bigtimes_{i = 1}^n T_i$ defined by $f(s) = (g_1(s), g_2(s), \ldots, g_n(s))$ for $s \in S$ is measurable
with respect to the product $\sigma$-algebra in the sense of \autoref{def:prodspace} on $\bigtimes_{i = 1}^n T_i$,
since for any product set $A = \bigtimes_{i = 1}^n A_i$ where $A_i \in \G_i$, $1 \leq i \leq n$,
one has $f^{-1}(A) = \bigcap_{i = 1}^n g_i^{-1}(A_i) \in \F$.

The next lemma establishes the existence and uniqueness of a measure on a product space
that is na\"ively compatible with the measures on the component spaces.

\begin{lemma}\label{lem:prodmeasure}
    Let $(S \times T, \H)$ be defined as in \autoref{def:prodspace}
    and let $\mu$, $\nu$ be $\sigma$-finite measures on $(S, \F)$ and $(T, \G)$, respectively.
    Then there is a unique measure $\lam$ on $(S \times T, \H)$ such that
    $\lam(F \times G) = \mu(F) \nu(G)$ for all $F \in \F, G \in \G$.
    Moreover, $\lam$ is $\sigma$-finite.
\end{lemma}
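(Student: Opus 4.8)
\noindent
The plan is to use the classical ``section-and-integrate'' construction for existence and a generating-$\pi$-system argument for uniqueness, isolating the single genuinely delicate measurability point. (One could instead build $\lam$ from the rectangle ``pre-measure'' by Carath\'eodory extension, but the section approach is more self-contained and simultaneously furnishes the machinery behind the Fubini--Tonelli interchange.)

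First I would record the standard fact about sections. For $E \subseteq S \times T$, $s \in S$, and $t \in T$, write $E_s := \{t \in T : (s,t) \in E\}$ and $E^t := \{s \in S : (s,t) \in E\}$. I claim $E \in \H$ forces $E_s \in \G$ for every $s$ and $E^t \in \F$ for every $t$. Indeed, for fixed $s$ the collection of all $E \subseteq S \times T$ whose $s$-section lies in $\G$ is a $\sigma$-algebra---taking sections commutes with complements and countable unions---and it contains every rectangle $F \times G$, whose $s$-section is $G$ when $s \in F$ and $\emptyset$ otherwise; hence it contains the generated $\sigma$-algebra $\H$. The argument for $E^t$ is symmetric.

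The crux, and the step I expect to be the main obstacle, is to show that for every $E \in \H$ the map $s \mapsto \nu(E_s)$ is $\F$-measurable, the map $t \mapsto \mu(E^t)$ is $\G$-measurable, and
\begin{equation}\label{eqn:twointegrals}
\int_S \nu(E_s)\,\di\mu(s) \;=\; \int_T \mu(E^t)\,\di\nu(t).
\end{equation}
I would establish this by a Dynkin $\pi$--$\lam$ (monotone class) argument, which is precisely where $\sigma$-finiteness enters. Suppose first that $\mu$ and $\nu$ are both \emph{finite} and let $\mathcal{D}$ be the family of $E \in \H$ satisfying all three assertions. The rectangles form a $\pi$-system inside $\mathcal{D}$: for $E = F \times G$ one has $\nu(E_s) = \nu(G)\,\indi{F}(s)$ and $\mu(E^t) = \mu(F)\,\indi{G}(t)$, so both integrals equal $\mu(F)\nu(G)$. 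Moreover $\mathcal{D}$ is a $\lam$-system---it contains $S \times T$, and closure under proper differences and increasing countable unions follows from finite additivity and the monotone convergence theorem. Finiteness is essential here, as it makes $\nu(B_s \setminus A_s) = \nu(B_s) - \nu(A_s)$ a legitimate (finite) subtraction that preserves measurability in $s$. Hence $\mathcal{D} = \H$ by the $\pi$--$\lam$ theorem. To drop finiteness I would use $\sigma$-finiteness to pick $S_m \uparrow S$ and $T_n \uparrow T$ with $\mu(S_m), \nu(T_n) < \infty$, apply the finite case to the truncated measures $\mu(\cdot \cap S_m)$ and $\nu(\cdot \cap T_n)$, and recover the general statement by monotone convergence as $m, n \to \infty$.

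Granting \eqref{eqn:twointegrals}, I would define $\lam(E) := \int_S \nu(E_s)\,\di\mu(s)$. Countable additivity is inherited from $\nu$ via monotone convergence: for pairwise disjoint $A_1, A_2, \ldots \in \H$ the sections $(A_i)_s$ are pairwise disjoint, so $\nu\big((\bigcup_i A_i)_s\big) = \sum_i \nu((A_i)_s)$ pointwise in $s$, and integration may be carried out term by term. Specializing to $E = F \times G$ gives $\lam(F \times G) = \int_S \nu(G)\,\indi{F}\,\di\mu = \mu(F)\nu(G)$, the required compatibility. Finally, both $\sigma$-finiteness and uniqueness follow from the generating rectangles: the sets $S_m \times T_m$ are rectangles of finite $\lam$-measure $\mu(S_m)\nu(T_m)$ exhausting $S \times T$, which gives $\sigma$-finiteness directly; and since any competing measure $\lam'$ with $\lam'(F \times G) = \mu(F)\nu(G)$ agrees with $\lam$ on the $\pi$-system of rectangles and on the finite-measure exhausting sequence $S_m \times T_m$, the uniqueness-of-extension half of the $\pi$--$\lam$ theorem forces $\lam' = \lam$ on all of $\H$.
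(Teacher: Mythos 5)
Your proof is correct, but there is nothing in the paper to compare it against step by step: the paper states \autoref{lem:prodmeasure} as a classical fact without proof, remarking only that the proof uses \autoref{lem:signedmeasuresagree} (uniqueness of measures agreeing on a generating, intersection-closed family containing an exhausting sequence), which is itself delegated to the $\pi$--$\lambda$ theorem and Durrett. Your section-and-integrate construction is the standard textbook existence argument, and you correctly isolate its crux: the $\F$-measurability of $s \mapsto \nu(E_s)$ for general $E \in \H$, settled by a $\pi$--$\lambda$ argument in the finite case and lifted to the $\sigma$-finite case by truncation and monotone convergence. Your uniqueness step is precisely the mechanism the paper points to: rectangles form a $\pi$-system generating $\H$, the sets $S_m \times T_m$ give an exhausting sequence of finite $\lam$-measure inside that $\pi$-system, and the uniqueness half of the $\pi$--$\lambda$ theorem finishes. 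One small point worth making explicit, since you invoke finiteness only for measurability of $s \mapsto \nu(B_s \setminus A_s)$: the same finiteness is also what legitimizes subtracting the two integral identities when verifying that proper differences stay in your class $\mathcal{D}$ (otherwise one risks $\infty - \infty$ at the level of $\int_S$ and $\int_T$); your reduction to finite $\mu$, $\nu$ covers this, but the proof reads better if both uses are named.
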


\noindent
We write $\mu \times \nu$ for the measure $\lam$ of \autoref{lem:prodmeasure}.

We note that the proof of \autoref{lem:prodmeasure} uses the following lemma, that we will
reuse, together with \autoref{lem:prodmeasure}, when it comes time to extend the
definition of product measures to signed measures (cf$.$ \autoref{prp:prodoffinitesignedmeasures}
in \autoref{sec:lorenzprod});
this next lemma can be proved via the $\pi$-$\lambda$ theorem (cf. Durrett \cite{Durrett2005Probability}):

\begin{lemma}\label{lem:signedmeasuresagree}
    If finite signed measures $\alpha$, $\alpha'$ on $(S, \F)$ agree on $\G \subseteq \F$
    where $\G$ generates $\F$, where $\G$ is closed under intersection,
    and where there exists a sequence of $G_i \in \G$ such that $S = \bigcup_{i = 1}^\infty G_i$,
    then $\alpha$, $\alpha'$ agree on $\F$.
\end{lemma}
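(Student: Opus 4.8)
The plan is to invoke the $\pi$-$\lambda$ (Dynkin) theorem, whose hypotheses are almost in place: $\G$ is already a $\pi$-system (being closed under intersection) that generates $\F$, so it remains only to exhibit a $\lambda$-system on which $\alpha$ and $\alpha'$ agree and which contains $\G$. The one wrinkle is that $S$ need not belong to $\G$ — we are handed only a countable cover $\{G_i\}$ — so the naive collection $\{A \in \F : \alpha(A) = \alpha'(A)\}$ is not obviously a $\lambda$-system, since both the axiom $S \in \mathcal{D}$ and closure under proper differences implicitly want $\alpha(S) = \alpha'(S)$, which we do not yet know. I would therefore first localize.

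Fix an arbitrary $G \in \G$ and set $\mathcal{D}_G := \{A \in \F : \alpha(A \cap G) = \alpha'(A \cap G)\}$. I would then verify that $\mathcal{D}_G$ is a $\lambda$-system: $S \in \mathcal{D}_G$ because $\alpha(S \cap G) = \alpha(G) = \alpha'(G)$ by hypothesis; closure under proper differences $B \setminus A$ (for $A \subseteq B$) follows by subtracting $\alpha(A \cap G)$ from $\alpha(B \cap G)$, which is legitimate \emph{precisely because} $\alpha, \alpha'$ are finite (so there is no $\infty - \infty$); and closure under increasing unions $A_n \uparrow A$ follows from continuity from below, which holds for finite signed measures via their Jordan decomposition into finite positive measures. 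Since $\G$ is closed under intersection, for any $G' \in \G$ we have $G' \cap G \in \G$ and hence $\alpha(G' \cap G) = \alpha'(G' \cap G)$, i.e. $\G \subseteq \mathcal{D}_G$. The $\pi$-$\lambda$ theorem then gives $\F = \sigma(\G) \subseteq \mathcal{D}_G$, so that
$$
\alpha(A \cap G) = \alpha'(A \cap G) \qquad \text{for every } A \in \F \text{ and every } G \in \G.
$$

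It remains to remove the intersection with $G$, using the cover. I would set $H_n := \bigcup_{i=1}^n G_i$ and expand $\alpha(A \cap H_n)$ by inclusion-exclusion; since every nonempty finite intersection $\bigcap_{i \in J} G_i$ again lies in $\G$, each resulting term $\alpha\big(A \cap \bigcap_{i \in J} G_i\big)$ equals its primed counterpart by the displayed identity, whence $\alpha(A \cap H_n) = \alpha'(A \cap H_n)$ for all $n$. As $H_n \uparrow S$, a final appeal to continuity from below yields $\alpha(A) = \lim_n \alpha(A \cap H_n) = \lim_n \alpha'(A \cap H_n) = \alpha'(A)$, which is the claim.

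The main obstacle is structural rather than computational: it is exactly that $S \notin \G$, which forces the localization to a fixed $G$ before Dynkin's theorem can be applied, and then a second inclusion-exclusion pass over the countable cover to globalize. Throughout, finiteness of the signed measures is the load-bearing hypothesis — it is what legitimizes the subtraction in the $\lambda$-system axioms and the passage to the limit — so I would flag finiteness (together with the Jordan decomposition that supplies continuity from below) as the crux.
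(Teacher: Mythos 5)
Your proof is correct and takes exactly the route the paper itself indicates: the paper disposes of this lemma by invoking the $\pi$-$\lambda$ theorem (citing Durrett), and your argument is a fully detailed instantiation of that same Dynkin-class strategy, including the standard localization to a fixed $G \in \G$ (needed since $S \notin \G$) and the globalization along the countable cover. No gaps.
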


\noindent
The set of \emph{extended real numbers} is the set $\Rbar = \rr\, \cup\, \{\infty, -\infty\}$
where $\infty$,
(also written ``$+\infty$''),
$-\infty$ are designated
symbols.
$\Rbar$ is endowed with a standard topology and arithmetic \cite{Rudin1976Principles}.
In particular, the topology of $\Rbar$ is isomorphic to the topology of the
closed interval $[-1,1]$ via the 1-to-1 mapping from $\Rbar$ to $[-1, 1]$ given by
$$
x \rightarrow \begin{cases} x / (1 + |x|) & \textrm{if } x \in \rr, \\
1 & \textrm{if } x = \infty, \\
-1 & \textrm{if } x = -\infty.
\end{cases}
$$
Arithmetic-wise, one defines
$0 \cdot \pm \infty = 0$, whereas $\infty - \infty$, $\pm \infty / \pm \infty$ as well as $a/0$ are undefined for
all $a \in \Rbar$.
One can also note that $\Rbar$ is totally ordered.

The \emph{Borel $\sigma$-algebra} of $\Rbar$ (resp$.$ $\rr$) is the $\sigma$-algebra generated
by all open subsets of $\Rbar$ (resp$.$ $\rr$).
Without ambiguity, the symbol $\B$ will denote the Borel $\sigma$-algebra of either $\Rbar$ or $\rr$.
Given a measurable space $(S, \F)$, a
function $f : S \rightarrow \Rbar$
is \emph{measurable} if
it is a measurable function from $(S, \F)$ to $(\Rbar, \B)$
in the sense of \autoref{def:meafunction}.
The Borel $\sigma$-algebra on $\rr^n$
is similarly defined as the $\sigma$-algebra generated by all open subset
of $\rr^n$.
It also coincides with the $\sigma$-algebra on $\rr^n$ generated by “rectangles”, i.e., sets
of the form $\bigtimes_{1 \leq i \leq n} A_i$ where each $A_i$ is a Borel subset of $\rr$.

Given a measure space $(S, \F, \mu)$,
a measurable function $f : S \rightarrow \Rbar$,
and $A \in \F$, the expression \phantom{aaaaaaaaaaaaaaaaaaaaaaaaaaaaaaaaaaa}
$$
\int_A f \,\di\mu
$$
denotes the Lebesgue integral of $f$ on $A$ with respect to the measure $\mu$.
One may also write this integral as \phantom{aaaaaaaaaaaaaaaaaaaaaaaaaaaa}
$$
\int_A f(s) \,\mu(\di s)
$$
which offers the possibility of specifying
the function $f$ on the fly in terms of an algebraic expression of $s$.

The value of the Lebesgue integral
is an element of $\Rbar$, or else is undefined. We recall that the Lebesgue integral is
defined as the difference of its positive and negative parts, being undefined
if and only if the positive and negative parts are both infinite.
In particular, the integral is guaranteed to exist if $f$ is nonnegative, and is guaranteed
to exist and to be finite if $f$ is bounded\footnote{A function $f : S \rightarrow \Rbar$
is \emph{bounded} if there exists an $M > 0$, $M \ne \infty$, such that $|f(s)| \leq M$
for all $s \in S$.} and $\mu(S) < \infty$. Furthermore, the integral is linear (provided
the linear-combination-of-functions evaluates to a well-defined function
from $S$ to $\Rbar$ and the linear-combination-of-integrals evaluates to a
well-defined element of $\Rbar$), and
$$
\mu(A) = \int_S \indi{A} \,\di\mu = \int_A \di\mu
$$
for any $A \in \F$, where $\indi{A}$ is the indicator function of $A$ on $S$.

\begin{definition}
    Given a measure $\mu$ on $(S, \F)$ and a boolean property $P$ of elements of $S$
    we say $P$ holds \emph{$\mu$-almost everywhere} if the set of $s \in S$ for which $P(s)$ is false
    is contained in a set $N \in \F$ such that $\mu(N) = 0$.
\end{definition}


\begin{lemma}\label{lem:almosteverywhererange}
    \emph{(Theorem 1.40, Rudin \cite{Rudin1987Real})}
    Let $\mu$ be a finite measure on $(S, \F)$ and let $f: S \to \rr$ be such that $\int_S |f| \,\di\mu < \infty$.
    Let $E$ be a closed subset of $\rr$. If
    $$
    \frac{1}{\mu(A)}\int_A f \,\di\mu \in E
    $$
    for every $A \in \F$ such that $\mu(A) > 0$, then $f \in E$ $\mu$-almost everywhere on $S$.
\end{lemma}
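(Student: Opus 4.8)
The plan is to show that the ``bad set'' $f^{-1}(\rr \setminus E)$ is $\mu$-null by covering the open set $\rr \setminus E$ with countably many closed intervals, on each of which an averaging argument yields a contradiction. Since $E$ is closed, $\rr \setminus E$ is open, and every open subset of $\rr$ is a countable union of closed intervals contained in it: for each $x \in \rr \setminus E$ there is an interval $[c - r, c + r] \subseteq \rr \setminus E$ with $x \in [c-r,c+r]$ and $c, r \in \mathbb{Q}$, and the collection of all such rational pairs $(c, r)$ with $[c-r, c+r] \subseteq \rr \setminus E$ is countable and covers $\rr \setminus E$. Enumerate these intervals as $\cls{B}_n = [c_n - r_n, c_n + r_n]$ and set $A_n := f^{-1}(\cls{B}_n)$, which lies in $\F$ because $f$ is measurable and each $\cls{B}_n$ is Borel.

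The key step is the averaging estimate on a single $A_n$. Suppose toward a contradiction that $\mu(A_n) > 0$. Since $\int_S |f| \, \di\mu < \infty$ and $\mu$ is finite, the average $\frac{1}{\mu(A_n)} \int_{A_n} f \, \di\mu$ is a well-defined real number. On $A_n$ one has $|f(s) - c_n| \leq r_n$ pointwise, so by linearity of the integral and the triangle inequality,
$$
\left| \frac{1}{\mu(A_n)} \int_{A_n} f \, \di\mu - c_n \right| = \frac{1}{\mu(A_n)} \left| \int_{A_n} (f - c_n) \, \di\mu \right| \leq \frac{1}{\mu(A_n)} \int_{A_n} |f - c_n| \, \di\mu \leq r_n .
$$
Thus the average lies in $\cls{B}_n \subseteq \rr \setminus E$, contradicting the hypothesis that $\frac{1}{\mu(A_n)} \int_{A_n} f \, \di\mu \in E$. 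Hence $\mu(A_n) = 0$ for every $n$.

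Finally I would assemble the pieces: because the $\cls{B}_n$ cover $\rr \setminus E$, any $s$ with $f(s) \notin E$ satisfies $f(s) \in \cls{B}_n$ for some $n$, so $f^{-1}(\rr \setminus E) = \bigcup_n A_n$. This is a countable union of null sets and is therefore null, which is precisely the assertion that $f \in E$ holds $\mu$-almost everywhere on $S$.

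I expect the only genuinely delicate point to be the topological reduction, namely arranging the closed intervals $\cls{B}_n$ so that they are \emph{contained} in $\rr \setminus E$ (rather than merely meeting it); this is what upgrades the open-complement membership to the non-strict pointwise bound $|f - c_n| \leq r_n$, which is exactly what is needed to land the average back inside $\rr \setminus E$ and trigger the contradiction. The averaging inequality and the measurability of each $A_n$ are routine given the finiteness hypotheses, and the final passage to a countable union is immediate once each $A_n$ is shown to be null.
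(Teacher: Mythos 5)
Your proof is correct, and it is essentially the same argument as the one the paper relies on: the paper cites this as Theorem 1.40 of Rudin, whose proof proceeds exactly as yours does, covering the open complement of $E$ by countably many closed discs (closed intervals in the real case) with rational data, applying the averaging estimate $\bigl|\frac{1}{\mu(A_n)}\int_{A_n} f \,\di\mu - c_n\bigr| \leq r_n$ to force each preimage to be null, and concluding via a countable union. Your handling of the one delicate point—choosing the closed intervals \emph{contained} in $\rr \setminus E$ so the average lands strictly outside $E$—matches the standard treatment, so nothing further is needed.
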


\noindent
The following lemma, commonly known as ``Fubini's theorem'',
singles out sufficient conditions under which an integral over a product space can be evaluated via iterated
integration. This will be central to our work:

\begin{lemma}\label{lem:fubini}
    \emph{(Fubini's theorem)}
    Let $(S, \F, \mu)$ and $(T, \G, \nu)$
    be two $\sigma$-finite measure spaces. 
    If $f \geq 0$ or $\int_{S \times T} |f| \,\di(\mu \times \nu) < \infty$ then
    $$
    \int_S \int_T f(s, t) \,\nu(\di t) \,\mu(\di s) = \int_{S \times T} f \,\di(\mu \times \nu)
        = \int_T \int_S f(s, t) \,\mu(\di s) \,\nu(\di t).
    $$
\end{lemma}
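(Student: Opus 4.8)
The plan is to follow the classical layering argument: establish the identity first for indicators of measurable sets, then extend to nonnegative simple functions by linearity, to nonnegative measurable functions by monotone convergence, and finally to integrable $f$ by splitting into positive and negative parts. I would begin by reducing to the case of finite measures. Since $\mu$ and $\nu$ are $\sigma$-finite, I can write $S = \bigcup_m S_m$ and $T = \bigcup_k T_k$ as increasing unions of sets of finite measure; every statement proved below for finite $\mu,\nu$ then transfers to the $\sigma$-finite setting by a monotone-convergence passage to the limit over the exhausting rectangles $S_m \times T_k$.

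The crux is the indicator case, i.e.\ showing for every $E \in \H$ that the section $E_s := \{t \in T : (s,t) \in E\}$ lies in $\G$, that the map $s \mapsto \nu(E_s)$ is $\F$-measurable, and that
\[
\int_S \nu(E_s)\,\mu(\di s) = (\mu \times \nu)(E),
\]
together with the symmetric statement integrating first over $S$. I would prove section-measurability by a ``good-sets'' argument: the family of $E \in \H$ all of whose $s$-sections lie in $\G$ is a $\sigma$-algebra (sections commute with complements and countable unions) and contains every rectangle $F \times G$ (whose $s$-section is $G$ or $\emptyset$), hence equals $\H$ since the rectangles generate $\H$ by \autoref{def:prodspace}. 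For the measurability of $s \mapsto \nu(E_s)$ and the displayed identity, I would let $\M$ be the collection of $E \in \H$ for which both hold (with finite measures in force) and verify that $\M$ is a $\lambda$-system containing the $\pi$-system of rectangles; the $\pi$-$\lambda$ theorem (as already invoked for \autoref{lem:signedmeasuresagree}) then gives $\M = \H$. The cleanest route is to observe that $E \mapsto \int_S \nu(E_s)\,\mu(\di s)$ is a measure on $\H$ agreeing with $\mu \times \nu$ on rectangles, so the identity is forced by the uniqueness clause of \autoref{lem:prodmeasure}.

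With the indicator case in hand, linearity of the integral extends the identity to nonnegative simple functions. For a general nonnegative measurable $f$ I would choose simple functions $0 \le \varphi_1 \le \varphi_2 \le \cdots$ with $\varphi_j \uparrow f$ pointwise and apply the monotone convergence theorem to pass the limit through the inner and outer integrations on both sides as well as through the product integral; this yields the $f \ge 0$ half of the statement (the Tonelli part), where all three expressions are well-defined in $\Rbar$ with no integrability caveat. Finally, for $f$ with $\int_{S \times T} |f|\,\di(\mu \times \nu) < \infty$, I would write $f = f^+ - f^-$ and apply the nonnegative case to $f^+$ and $f^-$ separately; finiteness of $\int|f|$ forces the inner integrals $\int_T f^\pm(s,\cdot)\,\di\nu$ to be finite for $\mu$-almost every $s$, so the inner difference is well-defined a.e.\ and the two outer integrals subtract cleanly to give the claimed equalities.

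The step I expect to be the main obstacle is the indicator case---specifically verifying that $s \mapsto \nu(E_s)$ is measurable and satisfies the integral identity for every $E \in \H$, since this is where $\sigma$-finiteness is genuinely needed (the $\lambda$-system closure under proper differences requires finiteness of the measures, so one must localize to the finite pieces $S_m \times T_k$ before passing to the limit) and where the monotone-class machinery does the real work; the remaining extensions are routine applications of linearity and monotone convergence.
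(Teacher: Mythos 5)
Your proposal is correct, but there is nothing in the paper to compare it against: the paper states Fubini's theorem as a background fact in its measure-theory preliminaries and gives no proof, deferring (as announced at the top of \autoref{sec:measuretheorey}) to the standard references of Durrett and Rudin. What you have written out is precisely the canonical proof found there: the good-sets argument for section-measurability, the $\pi$-$\lambda$ (monotone class) step for the indicator case, the layering through simple functions and monotone convergence to get the Tonelli half, and the $f = f^+ - f^-$ splitting with the almost-everywhere finiteness of the inner integrals to get the integrable half. You also flag the two genuinely delicate points correctly: the $\lambda$-system closure under proper differences is where finiteness (hence the localization to $S_m \times T_k$ before passing to the limit) is really needed, and the inner integral in the signed case is only defined $\mu$-almost everywhere, which is exactly the caveat that makes the iterated integral meaningful. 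One small remark: your ``cleanest route'' shortcut---observing that $E \mapsto \int_S \nu(E_s)\,\mu(\di s)$ is a measure agreeing with $\mu \times \nu$ on rectangles and invoking the uniqueness clause of \autoref{lem:prodmeasure}---still requires the measurability of $s \mapsto \nu(E_s)$ for all $E \in \H$ as a prerequisite, so it does not let you bypass the monotone-class step; it only replaces the verification of the integral identity, not of the measurability. With that understood, the argument is complete and is the proof the paper implicitly relies on.
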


\noindent
We note that the condition
$\int_{S \times T} |f| \,\di(\mu \times \nu) < \infty$
of \autoref{lem:fubini} is automatically fulfilled if
$\mu$, $\nu$ are finite and $f$ is bounded.

\begin{lemma}\label{lem:domconverge}
    \emph{(dominated convergence theorem)}
    Let $\mu$ be a $\sigma$-finite measure on $(S, \F)$.
    Let $f$ and $f_n$, $n \geq 1$ be measurable functions from $S$ to $\rr$
    such that $f_n \to f$ $\mu$-almost everywhere.
    If there exists a measurable function $g$ from $S$ to $\rr$ such that $|f_n| \leq g$ for all $n$
    and such that  $\int_S g \,\di\mu < \infty$,
    then $\int_S f_n \,\di\mu \to \int_S f \,\di\mu$.
\end{lemma}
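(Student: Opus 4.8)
The plan is to deduce the statement from Fatou's lemma. Since neither Fatou's lemma nor the monotone convergence theorem figures among the facts recalled above, as a preamble I would either cite them from Durrett \cite{Durrett2005Probability} or establish them in the usual order: first the monotone convergence theorem (by approximating a nonnegative measurable function from below by simple functions and invoking countable additivity of $\mu$), and then Fatou's lemma (by applying monotone convergence to the increasing sequence $h_n := \inf_{m \geq n} f_m$). In what follows I treat Fatou's lemma---$\int_S \liminf_n h_n \,\di\mu \leq \liminf_n \int_S h_n \,\di\mu$ for measurable $h_n \geq 0$---as available.

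First I would settle integrability and dispose of the almost-everywhere caveats. Because $|f_n| \leq g$ everywhere and $f_n \to f$ $\mu$-almost everywhere, taking limits gives $|f| \leq g$ $\mu$-almost everywhere; since $\int_S g \,\di\mu < \infty$ this makes each $f_n$ and $f$ integrable, their positive and negative parts being dominated by $g$. As modifying a function on a $\mu$-null set alters none of the integrals in play, I may assume $f_n \to f$ pointwise and $|f_n| \leq g$ pointwise on all of $S$. The core of the argument is then to apply Fatou's lemma to the two nonnegative sequences $g + f_n$ and $g - f_n$ (both well defined and real-valued, as $f, f_n, g$ map into $\rr$). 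For the first, Fatou gives $\int_S (g + f)\,\di\mu \leq \liminf_n \int_S (g + f_n)\,\di\mu$; subtracting the finite quantity $\int_S g\,\di\mu$ from both sides---legitimate by linearity since all terms are finite---yields $\int_S f\,\di\mu \leq \liminf_n \int_S f_n\,\di\mu$. For the second, Fatou gives $\int_S (g - f)\,\di\mu \leq \liminf_n \int_S (g - f_n)\,\di\mu = \int_S g\,\di\mu - \limsup_n \int_S f_n\,\di\mu$, whence $\limsup_n \int_S f_n\,\di\mu \leq \int_S f\,\di\mu$. Chaining the two inequalities forces $\limsup_n \int_S f_n\,\di\mu \leq \int_S f\,\di\mu \leq \liminf_n \int_S f_n\,\di\mu$, so all three coincide and $\int_S f_n\,\di\mu \to \int_S f\,\di\mu$.

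The main obstacle is not the short Fatou computation but the fact that the proof is only as self-contained as its upstream tools: the real work lives in the monotone convergence theorem and Fatou's lemma, neither of which is on hand. If the section is meant to be genuinely self-contained, that is where the effort belongs; if a citation suffices, the argument above is complete. As a minor remark, the $\sigma$-finiteness hypothesis on $\mu$ is not in fact used here---finiteness of $\int_S g\,\di\mu$ already confines all the action to $\{g > 0\}$, a countable union of sets of finite measure---but I would retain it in the statement for uniformity with the neighbouring lemmas.
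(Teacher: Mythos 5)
Your proof is correct. Note, however, that the paper never proves this lemma: it is recalled in \autoref{sec:measuretheorey} as a standard background fact, with Durrett \cite{Durrett2005Probability} and Rudin \cite{Rudin1987Real} cited for details, so there is no in-paper argument to compare against. Your Fatou-based argument (applied to $g+f_n$ and $g-f_n$ after discarding a null set) is exactly the classical textbook proof given in those references, and your two side observations are also accurate: the everywhere-domination $|f_n|\leq g$ makes the null-set modification harmless, and the $\sigma$-finiteness hypothesis is not needed for the conclusion---it appears only for uniformity with the neighbouring lemmas (Fubini, Radon--Nikodym), where it genuinely matters.
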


%
%


\noindent
A measure $\nu$ on $(S, \F)$ is said to be \emph{absolutely continuous}
with respect to a measure $\mu$ on the same measurable space
if $\nu(A) = 0$ for any $A$ such that $\mu(A) = 0$.
We also say that \emph{$\mu$ dominates $\alpha$}.
The following classical theorem (c.f. \cite{Durrett2005Probability}, Theorem A.4.8 on Page 417)
shows that a measure $\nu$ that is absolutely continuous with respect to $\mu$ can
be expressed in terms of integration with respect to $\mu$.

\begin{lemma}\label{lem:RadonNikodym}
    \emph{(Radon-Nikodym theorem)}
    Let $\mu$, $\nu$ be $\sigma$-finite measures on $(S, \F)$.
    If $\nu$ is absolutely continuous with respect to $\mu$,
    then there exists a measurable $g: S \to [0, \infty)$ such that\phantom{aaaaaaaaaaaaaa}
    $$
    \nu(A) = \int_A g \,\di\mu
    $$
    for all $A \in \F$. Moreover, if $h$ is another such function then $h = g$ $\mu$-almost everywhere.
\end{lemma}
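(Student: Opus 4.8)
The plan is to establish the theorem first for finite measures, then lift it to the $\sigma$-finite case by a gluing argument, and close with a short uniqueness argument powered by \autoref{lem:almosteverywhererange}. For the reduction, I would use $\sigma$-finiteness of both $\mu$ and $\nu$ to write $S$ as a countable disjoint union $S = \bigsqcup_k S_k$ with $\mu(S_k), \nu(S_k) < \infty$; proving the existence of a density $g_k$ for the pair restricted to each $S_k$ and then setting $g = \sum_k g_k \indi{S_k}$ produces a density on all of $S$, the exchange of sum and integral being justified by nonnegativity (and countable additivity of $\nu$).

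The heart of the matter is the finite case, which I would handle by a supremum/exhaustion argument. Consider the family
$$
\mathcal{D} = \Big\{ g : S \to [0,\infty) \text{ measurable} : \textstyle\int_A g \,\di\mu \leq \nu(A) \text{ for all } A \in \F \Big\},
$$
which contains $g \equiv 0$ and is closed under the pointwise maximum of two members (splitting any $A$ along $\{g_1 \geq g_2\}$ shows $\max(g_1,g_2) \in \mathcal{D}$). Since every $g \in \mathcal{D}$ obeys $\int_S g\,\di\mu \leq \nu(S) < \infty$, the quantity $M := \sup_{g \in \mathcal{D}} \int_S g\,\di\mu$ is finite; choosing $g_n \in \mathcal{D}$ with $\int_S g_n\,\di\mu \to M$ and replacing $g_n$ by $\max(g_1,\dots,g_n)$ yields an increasing sequence whose pointwise limit $g$ lies in $\mathcal{D}$ (monotone convergence gives $\int_A g\,\di\mu = \lim_n \int_A \max(g_1,\dots,g_n)\,\di\mu \leq \nu(A)$) and attains $\int_S g\,\di\mu = M$.

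It then remains to upgrade the inequality $\int_A g\,\di\mu \leq \nu(A)$ to equality. Setting $\lambda(A) := \nu(A) - \int_A g\,\di\mu$, one checks that $\lambda$ is a finite nonnegative measure that is absolutely continuous with respect to $\mu$; if $\lambda \not\equiv 0$ I would pick $\eps > 0$ with $(\lambda - \eps\mu)(S) > 0$ and take a positive set $P$ for the signed measure $\lambda - \eps\mu$. Absolute continuity forces $\mu(P) > 0$ (else $\nu(P)=0$ and $\lambda(P)=0$, contradicting positivity of $P$), and then $g + \eps\indi{P}$ is seen to lie in $\mathcal{D}$ yet to have integral $M + \eps\mu(P) > M$, a contradiction; hence $\lambda \equiv 0$, i.e. $\nu(A) = \int_A g\,\di\mu$. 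This is where I expect the real obstacle to sit: the step hinges on the Hahn decomposition of $\lambda - \eps\mu$ into positive and negative parts, which is not among the tools assembled in the excerpt and would have to be proved separately (the natural alternative, von Neumann's $L^2$/Riesz-representation argument, sits even further from the paper's self-contained, Hilbert-space-free development).

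Finally, for uniqueness, given two densities $g$ and $h$ I would apply \autoref{lem:almosteverywhererange} to $f = g - h$ on each finite piece $S_k$, where $f$ is $\mu$-integrable, with the closed set $E = \{0\}$: since $\int_A (g-h)\,\di\mu = \nu(A) - \nu(A) = 0$ for every $A \in \F$, the average $\frac{1}{\mu(A)}\int_A f\,\di\mu$ equals $0 \in E$ for all $A$ with $\mu(A)>0$, so the lemma yields $g = h$ $\mu$-almost everywhere on each $S_k$ and hence on $S$.
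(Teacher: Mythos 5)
The paper contains no proof of this lemma: it is imported as a classical black box, with a pointer to Durrett (Theorem A.4.8), just as its signed counterpart \autoref{lem:radonnikodymsigned} is imported from Rudin. So your proposal cannot be matched against a ``paper proof''; it should be judged as a self-contained proof of the cited theorem, and as such it is essentially correct and follows the standard exhaustion route (the one in Folland or Royden, rather than von Neumann's $L^2$/Riesz argument): reduce to finite $\mu$, $\nu$ by intersecting the two $\sigma$-finite decompositions, maximize $\int_S g\,\di\mu$ over the class $\mathcal{D}$ of subordinate densities, and kill the residual measure $\lambda = \nu - \int g\,\di\mu$ using a Hahn decomposition of $\lambda - \eps\mu$. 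Your uniqueness argument via \autoref{lem:almosteverywhererange} is correct and is exactly the kind of use the paper itself makes of that lemma. Your own caveat is also the right one: the Hahn decomposition theorem is not among the paper's assembled tools, but since the paper declines to prove Radon--Nikodym at all, relying on it puts you on no worse footing than the authors.

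Two steps need tightening. First, it is not enough to ``take a positive set $P$ for $\lambda - \eps\mu$'': the empty set is a positive set, and positivity of $P$ alone is not contradicted by $\lambda(P) = \mu(P) = 0$. You need $P$ to be the positive half of a Hahn decomposition $S = P \cup N$, which yields $(\lambda - \eps\mu)(P) \geq (\lambda - \eps\mu)(S) > 0$; then $\mu(P) = 0$ would give $\nu(P) = 0$ by absolute continuity, hence $\lambda(P) = 0$ and $(\lambda - \eps\mu)(P) = 0$, contradicting that strict inequality---this, not the positivity of $P$ as a set, is what breaks. (Note also that the desired $\eps$ exists only when $\mu(S) > 0$; if $\mu(S) = 0$ then $\nu \equiv 0$ by absolute continuity and there is nothing to prove.) Second, the increasing limit $g$ of your running maxima is a priori $[0,\infty]$-valued; since $\int_S g\,\di\mu \leq \nu(S) < \infty$ it is finite $\mu$-almost everywhere, and you should redefine it as $0$ on the null set where it is infinite so as to land in the stated class of functions $S \to [0,\infty)$. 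Both repairs are routine, and with them the proof is complete.
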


\noindent
The notation\phantom{aaaaaaaaaaaaaaaaaaaaaaaaaaaaaaaaaaaaaaaaaaaa}
$$
\lm{\nu}{\mu}
$$
is used to denote an arbitrary choice of the function $g$ described in \autoref{lem:RadonNikodym} for $\mu$, $\nu$,
and is called \emph{the Radon-Nikodym derivative of $\nu$ with respect to $\mu$}.
We will introduce a similar notation after \autoref{lem:radonnikodymsigned}
in \autoref{sec:signedvectormeasures}.

The Radon-Nikodym derivative satisfies the following key property,
strengthening the equation in \autoref{lem:RadonNikodym}:

\begin{lemma}\label{lem:measurechange}
    If $\mu$, $\nu$ are $\sigma$-finite measures on a measurable space $(S, \F)$
    such that $\nu$ is absolutely continuous with respect to $\mu$, then
    $$
    \int_{S} f \,\di{\nu} = \int_{S} f \cdot \lm{\nu}{\mu} \,\di{\mu}
    $$
    for any measurable $f : S \to \rr$ such that $f > 0$,
    or such that either one of $\int_S |f| \,\di\nu < \infty$ or $\int_{S} \big|f \cdot \lm{\nu}{\mu}\big| \,\di{\mu} < 0$ holds.
\end{lemma}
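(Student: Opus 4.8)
The plan is to prove the identity by the standard ``bootstrapping'' argument, building it up from indicator functions through nonnegative simple functions and monotone limits, and finally to general $f$ via the positive/negative decomposition. Throughout, write $g$ for the fixed Radon-Nikodym derivative $\lm{\nu}{\mu}$, recalling from \autoref{lem:RadonNikodym} that $g$ may be taken measurable and valued in $[0, \infty)$ (so $g$ is both finite and nonnegative) and that $\nu(A) = \int_A g \,\di\mu$ for all $A \in \F$.

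First I would check the identity for $f = \indi{A}$: the left-hand side is $\int_S \indi{A}\,\di\nu = \nu(A)$ and the right-hand side is $\int_S \indi{A}\cdot g\,\di\mu = \int_A g\,\di\mu$, and these agree by the defining property of $g$. Linearity of the integral then extends the identity to every nonnegative simple function.

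Next, for the case $f \geq 0$ (which subsumes the stated hypothesis $f > 0$), I would choose simple functions $0 \leq s_n \uparrow f$ converging pointwise; since $g \geq 0$ the products satisfy $s_n g \uparrow f g$ pointwise as well. Monotone convergence applied to each side gives $\int_S s_n\,\di\nu \to \int_S f\,\di\nu$ and $\int_S s_n g\,\di\mu \to \int_S f g\,\di\mu$ in $\Rbar$, and since the two sequences are termwise equal by the simple-function case, so are their limits. Both sides are well-defined (possibly $+\infty$) because the integrands are nonnegative.

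Finally, for general measurable $f : S \to \rr$, I would first note that applying the previous step to $|f|$ yields $\int_S |f|\,\di\nu = \int_S |f|g\,\di\mu$ as elements of $\Rbar$; hence the two integrability hypotheses ($\int_S |f|\,\di\nu < \infty$ and $\int_S |f g|\,\di\mu < \infty$) are in fact equivalent, and it suffices to assume either one. Under this assumption I would split $f = f^+ - f^-$ and apply the nonnegative case to $f^+$ and $f^-$ separately; the finiteness just established guarantees that all four integrals $\int_S f^\pm\,\di\nu$ and $\int_S f^\pm g\,\di\mu$ are finite, so that neither side is an indeterminate $\infty - \infty$, and linearity finishes the proof. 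The argument is routine; the only points requiring genuine care are the use of $g \geq 0$ to justify $s_n g \uparrow f g$ and of $g < \infty$ to ensure that $f g$ is a bona fide $\rr$-valued function (no $0 \cdot \infty$), together with the well-definedness check in the last step that rules out $\infty - \infty$. The sole tool not already recalled above is the monotone convergence theorem, which is standard; alternatively one may appeal directly to the definition of the integral of a nonnegative function as the supremum of the integrals of its simple minorants.
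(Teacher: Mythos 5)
Your proof is correct: the standard bootstrapping chain (indicators $\to$ nonnegative simple functions $\to$ nonnegative measurable functions via monotone convergence $\to$ general $f$ via $f = f^+ - f^-$) goes through exactly as you describe, and you correctly flag the two points that need care, namely $g \geq 0$ for the monotone limit $s_n g \uparrow fg$ and $g < \infty$ so that $fg$ is a genuine $\rr$-valued function. Note that the paper itself gives no proof of \autoref{lem:measurechange} — it is recalled as standard background (with \autoref{lem:RadonNikodym}, Fubini, etc.) with references to Rudin and Durrett — so there is no in-paper argument to compare against; your write-up is precisely the textbook proof those references supply, including the useful observation that the two integrability hypotheses in the statement are equivalent (and that the ``$< 0$'' in the statement is evidently a typo for ``$< \infty$'').
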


\section{Convex Analysis} \label{sec:convexanalysis}

Elements of $\rr^n$ are written in bold font and are interpreted as column vectors.
We write $\vx^T$ for the row vector transpose of
the column vector
$\vx \in \rr^n$. In particular,
$\dpr{\vx}{\vy}$ becomes the inner product of vectors $\vx$, $\vy \in \rr^n$,
written as a matrix product.
We write $\|\vx\|$ for the Euclidean norm
$\sqrt{\dpr{\vx}{\vx}}$ of $\vx\in\rr^n$.

We take for granted basic notions of point-set topology in $\rr^n$,
including open and closed sets, as well as closures, interiors, and boundary points.
The closure and interior of a set $C \subseteq \rr^n$
are written $\cls{C}$ and $\inn{C}$, respectively.


\begin{definition}
    Let $\vx_1, \ldots, \vx_k \in \rr^n$. A \emph{convex combination of $\vx_1, \ldots, \vx_k$} is a
    vector of the form\phantom{aaaaaaaaaaaaaaaaaaaa}
    $$
    \lam_1 \vx_1 + \dots + \lam_k \vx_k
    $$
    where $\lam_1, \dots, \lam_k$ are nonnegative real numbers such that $\lam_1 + \dots + \lam_k = 1$.
\end{definition}

\begin{definition}\label{def:convexset}
    A set $C \subseteq \rr^n$ is \emph{convex} if every convex combination of vectors in $C$ is in $C$.
\end{definition}

\noindent
One can check that a set $C \subseteq \rr^n$ is convex
if and only if $\lam \vx + (1 - \lam) \vy \in C$ for all $\vx$, $\vy \in C$
and all $0 \leq \lam \leq 1$.
(I.e., closure under convex combinations of size two suffices.)
Moreover, the closure and interior of a convex set are
convex and an arbitrary intersection of convex sets is convex.

\begin{definition}
    Let $A \subseteq \rr^n$. The \emph{convex hull of $A$}, written $\convhull(A)$, is the
    intersection of all convex sets containing $A$.
\end{definition}

\noindent
It is often more practical to characterize $\convhull(A)$ as the set
of all convex combinations of elements of $A$. (Since this set is convex,
contains $A$, and is contained in every convex set containing $A$.) In fact,
convex combinations of a definite size suffice by the following famous result
of Carathéodory (that can be used, e.g., to simplify the proof of \eqref{eqn:reachconvexhull}
or \autoref{prp:hullsum} below, though, in truth, the previous observation
suffices just as well):


\begin{lemma}\label{lem:caratheodory}
   \emph{(Carath\'eodory)} Let $A \subseteq \rr^n$.
   Then $\vx \in \convhull(A)$ if and only if $\vx$ is a convex combination of $n + 1$ points in $A$.
\end{lemma}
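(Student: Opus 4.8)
The plan is to prove the two directions separately, with the ``if'' direction being immediate and the ``only if'' direction carrying all of the content. For the ``if'' direction, any convex combination of $n+1$ points of $A$ is in particular a convex combination of elements of $A$, hence lies in $\convhull(A)$ by the characterization of the convex hull as the set of all convex combinations of elements of $A$ noted just before the statement.

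For the ``only if'' direction I would start from that same characterization: if $\vx \in \convhull(A)$ then $\vx = \sum_{i=1}^k \lam_i \vx_i$ for some finite $k$, with $\vx_i \in A$, $\lam_i \geq 0$, and $\sum_i \lam_i = 1$. The goal is to show that whenever $k > n+1$ one can rewrite $\vx$ as a convex combination of strictly fewer points of $A$; iterating this reduction then yields a representation using at most $n+1$ points.

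The reduction step is the crux. If $k > n+1$, then the $k-1$ difference vectors $\vx_2 - \vx_1, \ldots, \vx_k - \vx_1$ number more than $n$ and so are linearly dependent in $\rr^n$. Writing out a nontrivial dependence and moving everything to one side yields scalars $\mu_1, \ldots, \mu_k$, not all zero, with $\sum_i \mu_i \vx_i = \mathbf{0}$ and $\sum_i \mu_i = 0$. Since the $\mu_i$ sum to zero but are not all zero, at least one is strictly positive. I then consider the perturbed coefficients $\lam_i - t\mu_i$: because $\sum_i \mu_i \vx_i = \mathbf{0}$ these still represent $\vx$, and because $\sum_i \mu_i = 0$ they still sum to $1$, for every real $t$. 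Setting $t = \min_{i\,:\,\mu_i > 0} \lam_i/\mu_i$ and letting $i_0$ achieve this minimum drives the $i_0$-th coefficient to zero while keeping every coefficient nonnegative (for indices with $\mu_i > 0$ by the minimality defining $t$, and for indices with $\mu_i \leq 0$ since $t \geq 0$ forces $-t\mu_i \geq 0$). This exhibits $\vx$ as a convex combination of the $k-1$ points $\{\vx_i : i \neq i_0\}$, completing the reduction.

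The main obstacle is purely the sign bookkeeping of the perturbation argument---verifying that the chosen $t$ both annihilates one coefficient and leaves all others nonnegative---but no deeper difficulty arises. The linear-algebraic dependence together with the one-point-at-a-time reduction is the entire engine of the proof.
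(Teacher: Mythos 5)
Your proof is correct: it is the standard affine-dependence/perturbation argument for Carath\'eodory's theorem, and the sign bookkeeping (choosing $t = \min_{i:\mu_i>0}\lam_i/\mu_i$, which annihilates one coefficient while keeping the rest nonnegative) is handled properly. Note that the paper states this lemma without proof, citing it as a classical result, so there is no in-paper argument to compare against; yours is the canonical one. The only cosmetic point is that your reduction produces a convex combination of \emph{at most} $n+1$ points, which matches the lemma's phrasing once one pads with zero coefficients.
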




%

\noindent
We note that since the closure of a convex set is convex, the
closure-of-a-convex-hull is convex; on the other hand,
it is not true in general that the convex hull of a closed set is
convex, nor, perforce, that the convex-hull-of-a-closure is closed.
(Take the closed set $\{(x, 0) : x \in \rr\} \cup \{(0, 1)\}$ in $\rr^2$.)
However:

\begin{proposition}\label{prp:convclsisclsconv}
    $\convhull(\cls{C}) \subseteq \cls{\convhull(C)}$ for all $C \subseteq \rr^n$,
    with equality if $C$ is bounded.
\end{proposition}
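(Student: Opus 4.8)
The plan is to prove the two inclusions separately, handling the general inclusion $\convhull(\cls{C}) \subseteq \cls{\convhull(C)}$ first and then the reverse inclusion under the boundedness hypothesis. For the forward inclusion, I would observe that $\cls{\convhull(C)}$ is a closed convex set (its convexity because the closure of a convex set is convex, as noted just before the statement) that contains $C$, hence contains $\cls{C}$, and therefore contains $\convhull(\cls{C})$ since $\convhull(\cls{C})$ is by definition the smallest convex set containing $\cls{C}$. This direction requires no boundedness and is essentially a soft-topology argument: the only facts used are that $\cls{\convhull(C)}$ is convex and closed, that $C \subseteq \convhull(C) \subseteq \cls{\convhull(C)}$, and monotonicity of closure to pass from $C \subseteq \cls{\convhull(C)}$ to $\cls{C} \subseteq \cls{\convhull(C)}$.

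For the reverse inclusion $\cls{\convhull(C)} \subseteq \convhull(\cls{C})$ under the assumption that $C$ is bounded, the key point is that $\convhull(\cls{C})$ is already closed, so that taking the closure of $\convhull(C)$ cannot escape it. First I would note $C \subseteq \cls{C}$ gives $\convhull(C) \subseteq \convhull(\cls{C})$ by monotonicity of the convex hull. It then suffices to show $\convhull(\cls{C})$ is closed, because a closed set containing $\convhull(C)$ contains $\cls{\convhull(C)}$. To show $\convhull(\cls{C})$ is closed I would invoke Carathéodory (\autoref{lem:caratheodory}): every point of $\convhull(\cls{C})$ is a convex combination $\sum_{i=0}^{n} \lam_i \vx_i$ of $n+1$ points $\vx_i \in \cls{C}$. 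Given a sequence of such points converging to a limit $\vy$, I would use sequential compactness: since $C$ is bounded, $\cls{C}$ is compact, and the coefficient vectors live in the compact simplex $\{(\lam_0, \dots, \lam_n) : \lam_i \geq 0, \sum_i \lam_i = 1\}$. Passing to a convergent subsequence of both the points $\vx_i$ and the coefficients $\lam_i$, continuity of the map $(\lam, \vx) \mapsto \sum_i \lam_i \vx_i$ shows the limit $\vy$ is again a convex combination of $n+1$ points of $\cls{C}$, hence $\vy \in \convhull(\cls{C})$.

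The main obstacle, modest as it is, will be the compactness/limit-extraction step: I must extract a single subsequence along which all $n+1$ coefficient sequences and all $n+1$ point sequences converge simultaneously. This is handled by taking successive subsequences (finitely many times, once per index) or by viewing the data as a single point in the compact product space $\Delta_n \times (\cls{C})^{n+1}$, where $\Delta_n$ is the standard simplex; the Bolzano–Weierstrass theorem in this finite-dimensional compact set then yields the convergent subsequence directly. I would also remark at the outset that boundedness of $C$ is genuinely needed for the reverse inclusion, as the counterexample $\{(x,0) : x \in \rr\} \cup \{(0,1)\}$ recalled immediately before the statement shows that the convex hull of a closed (but unbounded) set need not be closed, so $\convhull(\cls{C})$ can fail to contain $\cls{\convhull(C)}$ without it.
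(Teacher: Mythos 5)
Your proof is correct. The paper records this proposition without a proof of its own (it is background in the convex-analysis section), and your argument—the soft ``smallest closed convex set'' inclusion in one direction, and Carath\'eodory (\autoref{lem:caratheodory}) together with compactness of the standard simplex times $(\cls{C})^{n+1}$ for the equality case—is exactly the standard route anticipated by the paper's placement of \autoref{lem:caratheodory} immediately before this statement, including your correct remark that the paper's unbounded counterexample shows boundedness is genuinely needed.
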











\noindent
Containment relations between closed, convex sets (bounded or unbounded,
though we shall only be concerned with the bounded case) may be obtained in a
“divide and conquer” approach, comparing how far two given sets reach on a
direction-by-direction basis, as per the supremum appearing in
\eqref{eqn:mainsummary}. We find it convenient to develop a notational
shorthand for the related supremum:

\begin{definition}\label{def:suppfunc}
    Let $C \subseteq \rr^n$.
    The \emph{reach function of $C$},
    written $\dual{C}{\cdot}$, is the function
    from $\rr^n$
    to $\Rbar$ defined by
    $\dual{C}{\vxs} \coloneqq \sup \{\dpr{\vxs}{\vx} \,:\, \vx \in C\}$.
\end{definition}

\noindent
It is possible to check that $\dual{C}{\cdot}$ is continuous and convex\footnote{In
the sense of a \emph{function} from $\rr^n$ to $\rbar$ being convex, not needed for this work.}
for every nonempty $C \subseteq \rr^n$. We also note that despite the
suggestive notation, $\dual{C}{\cdot}$ is not in general\footnote{As the incantation
goes, $\dual{C}{\cdot}$ is a norm if and only if the convex hull of $C$ is bounded,
has nonempty interior, and is centrally symmetric around $\vec{0}\in\rr^n$.} a norm.


We write\phantom{aaaaaaaaaaaaaaaaaaaaaaaaaaaaaaaaaaaaa}
$$
\dual{C}{\cdot} \leq \dual{D}{\cdot},\qquad\qquad\dual{C}{\cdot} = \dual{D}{\cdot}
$$
if $\dual{C}{\vxs} \leq \dual{D}{\vxs}$,
respectively $\dual{C}{\vxs} = \dual{D}{\vxs}$, for all $\vxs\in\rr^n$.

It is not hard to see that\phantom{aaaaaaaaaaaaaaaaaaaaa}
\begin{equation}\label{eqn:reachconvexhull}
\dual{C}{\cdot} = \dual{\convhull(C)}{\cdot},\qquad \duall{C}{\cdot} = \dual{\cls{C}}{\cdot}
\end{equation}
for all $C \subseteq \rr^n$, where the second identity follows by
continuity of the inner product $\dpr{\vxs}{\vx}$
as a function of $\vx \in \rr^n$.
Moreover, taking closures and taking
the convex hull are the only operations that do not enlarge the reach,
in the sense of the following proposition:

\begin{proposition}\label{prp:newguy}
    $\dual{D}{\cdot} \leq \dual{C}{\cdot}$ if and only if
    $D \subseteq \cls{\convhull(C)}$ for all $C, D \subseteq \rr^n$.
\end{proposition}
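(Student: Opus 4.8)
The plan is to prove the two implications separately, noting that the forward implication ($\Leftarrow$) is routine while the reverse ($\Rightarrow$) carries the real content and rests on the separating hyperplane theorem. Throughout I would exploit that, by combining the two identities of \eqref{eqn:reachconvexhull}, one has $\dual{\cls{\convhull(C)}}{\cdot} = \dual{\convhull(C)}{\cdot} = \dual{C}{\cdot}$, so that passing to the closed convex hull changes neither side of the claimed equivalence.

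For the easy direction I would start from $D \subseteq \cls{\convhull(C)}$ and use monotonicity of the reach function under inclusion: if $D \subseteq E$, then $\dpr{\vxs}{\vx}$ ranges over a subset of values when $\vx$ ranges over $D$ rather than over $E$, whence $\dual{D}{\vxs} \leq \dual{E}{\vxs}$ for every $\vxs \in \rr^n$ directly from the definition of supremum. Applying this with $E = \cls{\convhull(C)}$ and invoking the identity above gives $\dual{D}{\cdot} \leq \dual{\cls{\convhull(C)}}{\cdot} = \dual{C}{\cdot}$.

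For the reverse direction I would argue the contrapositive. Write $K = \cls{\convhull(C)}$, which is closed and convex (it is the closure of the convex set $\convhull(C)$), and suppose $D \not\subseteq K$, fixing some $\vy \in D \setminus K$. If $C = \emptyset$ then $K = \emptyset$ and $\dual{C}{\cdot} \equiv -\infty$, while $\dual{D}{\vxs} \geq \dpr{\vxs}{\vy} > -\infty$ for any $\vxs$, so $\dual{D}{\cdot} \not\leq \dual{C}{\cdot}$ and we are done; hence assume $K \neq \emptyset$. Since $K$ is a nonempty closed convex set not containing $\vy$, the separating hyperplane theorem supplies a $\vxs \in \rr^n$ with $\dpr{\vxs}{\vy} > \sup_{\vx \in K}\dpr{\vxs}{\vx} = \dual{K}{\vxs}$. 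Concretely, one may take $\vxs = \vy - \vec{p}$, where $\vec{p}$ is the nearest point of $K$ to $\vy$ (whose existence follows from closedness of $K$ after intersecting with a large enough ball); then $\dpr{\vxs}{\vx} \leq \dpr{\vxs}{\vec{p}} < \dpr{\vxs}{\vy}$ for all $\vx \in K$. Because $\dual{K}{\cdot} = \dual{C}{\cdot}$ as noted above, and because $\dual{D}{\vxs} \geq \dpr{\vxs}{\vy}$ as $\vy \in D$, we conclude $\dual{D}{\vxs} > \dual{C}{\vxs}$, i.e. $\dual{D}{\cdot} \not\leq \dual{C}{\cdot}$, as required.

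The only genuine obstacle is this strict separation step; everything else is monotonicity of the supremum together with the already-established invariance of the reach function under convex hull and closure. I would make the separation self-contained through the nearest-point projection, in keeping with the paper's expository aims, rather than merely citing the separating hyperplane theorem as a black box.
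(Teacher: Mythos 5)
Your proposal is correct and takes essentially the approach the paper intends: the paper states this proposition without proof, remarking only that it relies on (indeed is equivalent to) a separating hyperplane theorem and citing Rockafellar, and your argument is exactly that route — reduce both sides to $\cls{\convhull(C)}$ via \eqref{eqn:reachconvexhull} and monotonicity of the supremum, then strictly separate a point of $D \setminus \cls{\convhull(C)}$ from that closed convex set. The only difference is that you inline the standard nearest-point-projection proof of strict separation (and handle the $C = \emptyset$ edge case) rather than citing the theorem as a black box, which is sound and in keeping with the paper's self-contained style.
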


\noindent
If $C$ is convex and closed then $\cls{\convhull(C)} = C$, naturally, so:

\begin{proposition}\label{prp:suppconju}
    If $C \subseteq \rr^n$ convex and closed then $D \subseteq C$
    if and only if $\dual{D}{\cdot} \leq \dual{C}{\cdot}$.
\end{proposition}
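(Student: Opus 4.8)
The plan is to obtain this as an immediate specialization of \autoref{prp:newguy}. That proposition already furnishes the equivalence $\dual{D}{\cdot} \leq \dual{C}{\cdot} \iff D \subseteq \cls{\convhull(C)}$ for arbitrary $C, D \subseteq \rr^n$, so all that remains is to check that the two standing hypotheses on $C$ — convexity and closedness — cause the right-hand side to simplify to $D \subseteq C$.

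First I would argue that $\cls{\convhull(C)} = C$ whenever $C$ is convex and closed. Since $\convhull(C)$ is by definition the smallest convex set containing $C$, and $C$ is itself convex, we get $\convhull(C) = C$; applying the closure and using that $C$ is closed then yields $\cls{\convhull(C)} = \cls{C} = C$. Substituting this into the equivalence of \autoref{prp:newguy} gives exactly $\dual{D}{\cdot} \leq \dual{C}{\cdot} \iff D \subseteq C$, which is the claim. This is precisely the reduction flagged by the inline remark immediately preceding the statement.

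It is worth recording how the two implications split in difficulty, since that locates where the real content lies. The forward direction ($D \subseteq C \implies \dual{D}{\cdot} \leq \dual{C}{\cdot}$) is elementary and needs neither convexity nor closedness: a supremum over a larger set can only grow, so $\dual{D}{\vxs} = \sup_{\vx \in D} \dpr{\vxs}{\vx} \leq \sup_{\vx \in C} \dpr{\vxs}{\vx} = \dual{C}{\vxs}$ for every $\vxs \in \rr^n$. All the weight sits in the converse, where one must rule out a point $\vy \in D \backslash C$; the engine there — already expended inside \autoref{prp:newguy} — is a separating-hyperplane argument producing a direction $\vxs$ with $\dpr{\vxs}{\vy} > \dual{C}{\vxs}$, contradicting $\dual{D}{\cdot} \leq \dual{C}{\cdot}$. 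Because that step is quarantined in the cited proposition, the present proof has no genuine obstacle: the one thing to verify is the set-theoretic collapse $\cls{\convhull(C)} = C$ under the convex-plus-closed hypothesis, and everything else follows by substitution.
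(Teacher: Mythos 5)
Your proposal is correct and matches the paper exactly: the paper derives this proposition from \autoref{prp:newguy} via the one-line observation that $\cls{\convhull(C)} = C$ when $C$ is convex and closed, which is precisely your reduction. The extra commentary on where the separating-hyperplane content lives is accurate but not needed, since that work is indeed already encapsulated in \autoref{prp:newguy}.
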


\noindent
It should be noted that \autoref{prp:newguy} relies on—indeed, is equivalent to—a
“separating hyperplane theorem”, one of the
deeper tools in convex analysis. (See, e.g., Rockafellar \cite{Rockafellar1970Convex}, Theorem 11.3.)

Our containment results will be obtained by way of \autoref{prp:suppconju}.
In so doing, it is often convenient to restrict the comparison between two
reach functions to \phantom{aaaaaaaaaaaaaaaaaaaaa}
$$
\Sph \coloneqq \{ \vxs \in \rr^n \,:\, \|\vxs\| = 1\}
$$
the unit sphere in $\rr^n$, since a reach function is positively homogeneous.




The following elementary propositions are also recorded for completeness:

\begin{proposition}\label{prp:convclfisclfconv}
    $\convhull(\clf(B, \vxs)) \subseteq \clf(\convhull(B), \vxs)$ for all $B \subseteq \rr^n$,
    $\vxs\in\Sph$,
    with equality if $B$ is bounded.
\end{proposition}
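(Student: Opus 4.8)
The plan is to interpret $\clf(B, \vxs)$ as the exposed face of $\cls{B}$ in direction $\vxs$, namely $\clf(B, \vxs) = \{\vx \in \cls{B} : \dpr{\vxs}{\vx} = \dual{B}{\vxs}\}$, and to prove the two inclusions separately, the first holding for all $B$ and the second only under boundedness. The organizing principle I would lean on is that the reach value itself is insensitive to hulls and closures: \eqref{eqn:reachconvexhull} gives $\dual{B}{\vxs} = \dual{\convhull(B)}{\vxs} = \dual{\cls{B}}{\vxs} =: t$, so $\clf(B, \vxs)$ and $\clf(\convhull(B), \vxs)$ are cut out of $\cls{B}$ and $\cls{\convhull(B)}$, respectively, by one and the same supporting hyperplane $\{\vx : \dpr{\vxs}{\vx} = t\}$. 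If $t = \infty$ both faces are empty and nothing is to prove, so I may take $t$ finite; this is automatic once $B$ is bounded.

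For the inclusion $\convhull(\clf(B, \vxs)) \subseteq \clf(\convhull(B), \vxs)$ I would take a generic $\vx = \sum_i \lam_i \vxi$ with $\vxi \in \clf(B, \vxs)$, $\lam_i \geq 0$, and $\sum_i \lam_i = 1$. Each $\vxi$ lies in $\cls{B} \subseteq \cls{\convhull(B)}$, and since $\cls{\convhull(B)}$ is convex (the closure of a convex set is convex) the combination $\vx$ lies there as well; moreover linearity of the inner product and $\sum_i \lam_i = 1$ give $\dpr{\vxs}{\vx} = \sum_i \lam_i t = t$. Hence $\vx \in \clf(\convhull(B), \vxs)$. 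This step is routine: it merely records that a face, being the intersection of a convex set with a supporting hyperplane, is itself convex.

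For equality when $B$ is bounded I would establish the reverse inclusion. Given $\vx \in \clf(\convhull(B), \vxs)$, so that $\vx \in \cls{\convhull(B)}$ and $\dpr{\vxs}{\vx} = t$, boundedness lets me invoke \autoref{prp:convclsisclsconv} to replace $\cls{\convhull(B)}$ by $\convhull(\cls{B})$, whereupon \autoref{lem:caratheodory} writes $\vx = \sum_i \lam_i \vxi$ as a finite convex combination with $\vxi \in \cls{B}$. The decisive move is a complementary-slackness argument: each $\dpr{\vxs}{\vxi} \leq t$ because $\vxi \in \cls{B}$, yet $\sum_i \lam_i \dpr{\vxs}{\vxi} = t = \sum_i \lam_i t$, so the nonnegative numbers $\lam_i (t - \dpr{\vxs}{\vxi})$ sum to zero and therefore each vanishes. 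Thus every $\vxi$ with $\lam_i > 0$ satisfies $\dpr{\vxs}{\vxi} = t$, i.e.\ lies in $\clf(B, \vxs)$, and discarding the zero-weight terms exhibits $\vx$ as a convex combination of points of $\clf(B, \vxs)$.

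I expect the only real obstacle to be conceptual rather than computational: seeing that the two faces share the \emph{same} cutting hyperplane (this is precisely where \eqref{eqn:reachconvexhull} is used) and isolating where boundedness is genuinely needed — solely to upgrade $\convhull(\cls{B}) \subseteq \cls{\convhull(B)}$ to the equality supplied by \autoref{prp:convclsisclsconv}. The complementary-slackness step is the one spot demanding a little care, but it rests on the elementary fact that a sum of nonnegative reals is zero only if every summand is.
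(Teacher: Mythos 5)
Your proof is correct, and there is nothing of the authors' to compare it against: the paper records this proposition without proof and, in fact, never defines $\clf$. Your reading of $\clf(B, \vxs)$ as the exposed face $\{\vx \in \cls{B} \,:\, \dpr{\vxs}{\vx} = \dual{B}{\vxs}\}$ of the \emph{closure} is the natural one, and it is precisely the reading under which the statement has the claimed shape: had the face been taken inside $B$ itself rather than $\cls{B}$, your complementary-slackness argument would already yield equality for arbitrary $B$, making the hypothesis that $B$ be bounded vacuous; with the closure present, equality genuinely fails for unbounded sets (the paper's own example $\{(x,0) : x \in \rr\} \cup \{(0,1)\}$, used just before \autoref{prp:convclsisclsconv}, witnesses this with $\vxs = (0,1)$), and your proof correctly isolates boundedness as entering only through the identity $\cls{\convhull(B)} = \convhull(\cls{B})$ of \autoref{prp:convclsisclsconv}. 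Both halves are sound: the forward inclusion rests on \eqref{eqn:reachconvexhull} (the two faces are cut out by one and the same supporting hyperplane) together with convexity of $\cls{\convhull(B)}$, and the reverse inclusion on \autoref{prp:convclsisclsconv}, \autoref{lem:caratheodory}, and the elementary fact that nonnegative reals summing to zero all vanish.
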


\noindent
Let $A + B$ denote the Minkowski sum of sets $A$, $B \subseteq \rr^n$,
i.e.,
$$
A + B \coloneqq \{\vx + \vy \,:\, \vx \in A, \vy \in B\}.
$$
We write $\vx + A$ to denote $\{\vx\} + A$ for $\vx \in \rr^n$, $A \subseteq \rr^n$ for convenience.
Moreover, let\phantom{aaaaaaaaaaaaaaaaaaaaaaaaa}
$$
-A \coloneqq \{-\vx \,:\, \vx \in A\}
$$
for $A \in \rr^n$ by convention.

\begin{proposition}\label{prp:hullsum}
    Let $A$, $B \subseteq \rr^n$. Then $\convhull(A + B) = \convhull(A) + \convhull(B)$, $\convhull(-A) = -\convhull(A)$.
\end{proposition}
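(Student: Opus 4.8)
The plan is to prove each of the two identities by double inclusion, relying throughout on the characterization of $\convhull(\cdot)$ (noted just after its definition) as the set of all finite convex combinations of points of the underlying set.

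For the containment $\convhull(A + B) \subseteq \convhull(A) + \convhull(B)$, I would first record the auxiliary fact that the Minkowski sum of two convex sets is convex: if $C$, $D$ are convex and $\vx_1 + \vy_1$, $\vx_2 + \vy_2 \in C + D$ with $\vx_i \in C$, $\vy_i \in D$, then for $0 \le \lam \le 1$ we have $\lam(\vx_1 + \vy_1) + (1 - \lam)(\vx_2 + \vy_2) = (\lam \vx_1 + (1 - \lam)\vx_2) + (\lam \vy_1 + (1 - \lam)\vy_2) \in C + D$, since each parenthesized term lies in $C$, resp$.$ $D$, by convexity. Applying this with $C = \convhull(A)$, $D = \convhull(B)$ shows $\convhull(A) + \convhull(B)$ is convex; as it visibly contains $A + B$ (because $A \subseteq \convhull(A)$ and $B \subseteq \convhull(B)$), it must contain the smallest convex set containing $A + B$, namely $\convhull(A + B)$.

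For the reverse containment, I would take arbitrary $\vx \in \convhull(A)$ and $\vy \in \convhull(B)$ and write them as convex combinations $\vx = \sum_i \lam_i \va_i$, $\vy = \sum_j \mu_j \vb_j$ with $\va_i \in A$, $\vb_j \in B$. Multiplying out and inserting the factors $\sum_j \mu_j = 1$ and $\sum_i \lam_i = 1$ yields $\vx + \vy = \sum_{i,j} \lam_i \mu_j (\va_i + \vb_j)$, which exhibits $\vx + \vy$ as a convex combination of the points $\va_i + \vb_j \in A + B$; hence $\vx + \vy \in \convhull(A + B)$. For the second identity I would simply note that negation $\vx \mapsto -\vx$ is linear and so distributes over convex combinations: $\vx \in \convhull(-A)$ iff $\vx = \sum_i \lam_i(-\va_i) = -\sum_i \lam_i \va_i$ for some $\va_i \in A$ and convex weights $\lam_i$, which holds iff $-\vx \in \convhull(A)$, i.e$.$ iff $\vx \in -\convhull(A)$.

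I anticipate no genuine obstacle here; the only mildly delicate point is the bookkeeping in the reverse containment, where one must verify that the product weights $\lam_i \mu_j$ are nonnegative and sum to $1$ so as to constitute a legitimate convex combination. Carath\'eodory's theorem (\autoref{lem:caratheodory}) is not required, although it could be invoked to cap the number of summands should a finiteness bound be wanted.
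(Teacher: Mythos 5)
Your proposal is correct and follows precisely the route the paper intends: the paper records \autoref{prp:hullsum} without proof, remarking just before \autoref{lem:caratheodory} that the characterization of $\convhull(\cdot)$ as the set of finite convex combinations suffices, and that is exactly the machinery you deploy (with Carath\'eodory correctly identified as optional). Both inclusions and the negation identity are handled soundly, including the verification that the product weights $\lam_i\mu_j$ are nonnegative and sum to $1$.
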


\section{Signed Vector Measures} \label{sec:signedvectormeasures}

\begin{definition}\label{def:signedmeasure}
    A \emph{signed measure} on a measurable space $(S, \F)$ is a function $\alpha: \F \to \Rbar$ such that \phantom{aaaaaaaaaaaaaaaaaaaaaaaaaa}
    \begin{equation}\label{eqn:signedmeasurecountablyaddative}
        \alpha\Big(\bigcup_{i = 1}^{\infty}A_i\Big) = \sum_{i=1}^{\infty}\alpha(A_i)
    \end{equation}
    for any collection $\{A_i\}_{i \in \N}$ of pairwise disjoint elements of $\F$.
\end{definition}

\noindent
By contrast, a measure defined as in \autoref{def:measure} is sometimes called a \emph{positive measure}
to emphasize it being a special kind of signed measure.
Just as in the case of (positive) measures, \eqref{eqn:signedmeasurecountablyaddative} in the definition implies $\alpha(\emp) = 0$.
It is also noted that the sum in \eqref{eqn:signedmeasurecountablyaddative} converges absolutely, since any rearrangement of the series converges to the measure of the same union.
If the range of $\alpha$ does not include $\infty$ or $-\infty$ then $\alpha$ is called \emph{finite}.
Our work will only concern finite signed measures.

\begin{definition}\label{def:totalvariationofsignedmeasures}
    The \emph{total variation} of a finite signed measure $\alpha$ on $(S, \F)$
    is the function $|\alpha|: \F \to \Rbar$ defined by \phantom{aaaaaaaaaaaaaaaaaaaaaaaaaa}
    $$
    |\alpha|(A) = \sup \sum_{i = 1}^{\infty}|\alpha(A_i)|
    $$
    for all $A \in \F$, where the supremum is taken over all collections $\{A_i\}_{i\in\nn}$
    of pairwise disjoint elements of $\F$ of union $A$.
\end{definition}

\noindent
It is easy to check that $|\alpha|$ is a positive measure on $(S, \F)$ for every signed measure $\alpha$ on $(S, \F)$.

Similarly to the case when $\alpha$ is positive and $\sigma$-finite,
a signed measure $\alpha$ on $(S, \F)$ is said to be \emph{absolutely continuous}
with respect to a positive measure $\mu$ on the same measurable space
if $\alpha(A) = 0$ for any $A$ such that $\mu(A) = 0$,
and we also say that \emph{$\mu$ dominates $\alpha$}.
Obviously, a finite $\alpha$ is absolutely continuous with respect to $|\alpha|$,
and it is easy to show that $\mu$ dominates $|\alpha|$ if and only if $\mu$ dominates $\alpha$.
As the analogue of \autoref{lem:RadonNikodym}
one has the following lemma (c.f. the traditional theorem of Lebesgue-Radon-Nikodym
for complex measures, 6.10 of Rudin \cite{Rudin1987Real}):
\begin{lemma}\label{lem:radonnikodymsigned}
    Let $\alpha$ be a finite signed measure on $(S, \F)$ and
    let $\mu$ be a $\sigma$-finite positive measure on $(S, \F)$.
    If $\mu$ dominates $\alpha$, then there exists a measurable function
    $g: S \to \rr$ such that
    $$
    \alpha(A) = \int_A g \,\di\mu
    $$
    for all $A \in \F$. Moreover, if $h$ is another such function, then $h = g$ $\mu$-almost everywhere.
\end{lemma}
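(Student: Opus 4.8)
The plan is to reduce to the positive Radon–Nikodym theorem (\autoref{lem:RadonNikodym}) by decomposing $\alpha$ into a difference of two finite positive measures. Since $|\alpha|$ is a positive measure and $\alpha$ is finite, the total variation $|\alpha|(S)$ is finite (a standard fact for finite signed measures, cf.\ the complex case in Rudin), and from the elementary bound $|\alpha|(A) \geq |\alpha(A)|$ one checks that
$$
\alpha^+ := \tfrac{1}{2}\big(|\alpha| + \alpha\big), \qquad \alpha^- := \tfrac{1}{2}\big(|\alpha| - \alpha\big)
$$
are both finite positive measures satisfying $\alpha = \alpha^+ - \alpha^-$ (this is just the Jordan decomposition, which may alternatively be invoked as a named theorem). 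Because $\mu$ dominates $\alpha$ it dominates $|\alpha|$, hence also $\alpha^+$ and $\alpha^-$: if $\mu(A) = 0$ then $|\alpha|(A) = 0$ and $\alpha(A) = 0$, so $\alpha^\pm(A) = 0$.

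Next I would apply \autoref{lem:RadonNikodym} separately to the pairs $(\mu, \alpha^+)$ and $(\mu, \alpha^-)$; this is legitimate since $\mu$ is $\sigma$-finite and each $\alpha^\pm$ is a finite (hence $\sigma$-finite) positive measure dominated by $\mu$. It yields measurable densities $g^+, g^-: S \to [0, \infty)$ with $\alpha^\pm(A) = \int_A g^\pm \,\di\mu$ for all $A \in \F$. Setting $g := g^+ - g^-$, which is a well-defined real-valued measurable function because $g^+$ and $g^-$ are finite-valued, I would conclude $\alpha(A) = \int_A g \,\di\mu$ by linearity of the integral; linearity is applicable here precisely because each of $\int_A g^+ \,\di\mu = \alpha^+(A)$ and $\int_A g^- \,\di\mu = \alpha^-(A)$ is finite.

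For uniqueness, suppose $h: S \to \rr$ is any measurable function with $\alpha(A) = \int_A h \,\di\mu$ for all $A \in \F$. I would first record that $h$ is $\mu$-integrable: splitting on the sign of $h$, the integrals $\int_S h^+ \,\di\mu = \alpha(\{h \geq 0\})$ and $\int_S h^- \,\di\mu = -\alpha(\{h < 0\})$ are both finite since $\alpha$ is finite, so $\int_S |h| \,\di\mu < \infty$; the same holds for $g$ since $\int_S |g| \,\di\mu \leq \int_S (g^+ + g^-)\,\di\mu = |\alpha|(S) < \infty$. Then $f := g - h$ is $\mu$-integrable and satisfies $\int_A f \,\di\mu = 0$ for every $A$, so $\frac{1}{\mu(A)}\int_A f \,\di\mu = 0 \in \{0\}$ whenever $\mu(A) > 0$. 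Applying \autoref{lem:almosteverywhererange} with the closed set $E = \{0\}$ gives $f = 0$ $\mu$-almost everywhere, i.e.\ $g = h$ $\mu$-almost everywhere.

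The only genuinely nontrivial ingredient is the existence of the Jordan (equivalently Hahn) decomposition together with the finiteness of $|\alpha|(S)$; everything else is a routine reduction to the positive case plus an application of linearity and \autoref{lem:almosteverywhererange}. If one prefers to remain fully self-contained rather than cite the Jordan decomposition, the main obstacle becomes constructing it directly, which amounts to exhibiting a Hahn set on which $\alpha$ is nonnegative — obtained by the standard argument of taking the supremum of $\alpha$ over all measurable sets and extracting a maximizing set through a countable limiting procedure.
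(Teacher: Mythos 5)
The paper offers no proof of this lemma at all---it is imported as a classical fact (the analogue, for signed measures, of the Lebesgue--Radon--Nikodym theorem for complex measures, Theorem 6.10 of Rudin)---so your argument is necessarily a different route: a reduction to the positive Radon--Nikodym theorem (\autoref{lem:RadonNikodym}) via the Jordan decomposition $\alpha^{+} = \tfrac{1}{2}(|\alpha| + \alpha)$, $\alpha^{-} = \tfrac{1}{2}(|\alpha| - \alpha)$. The existence half of your proof is correct as written: $\alpha^{+}$ and $\alpha^{-}$ are finite positive measures (using $|\alpha(A)| \leq |\alpha|(A)$ and $|\alpha|(S) < \infty$) dominated by $\mu$, \autoref{lem:RadonNikodym} applies to each since finite measures are $\sigma$-finite, and the subtraction $g = g^{+} - g^{-}$ together with linearity is legitimate precisely because all the integrals involved are finite. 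Your appeal to the finiteness of the total variation as a standard external fact is consistent with the paper's own practice---the paper also only cites this (in \autoref{lem:tvisintegralofab}, from Chapter 6 of Rudin), and in Rudin that fact is established independently of the Radon--Nikodym theorem, so there is no circularity.

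There is, however, one step that does not go through as written. In the uniqueness argument you apply \autoref{lem:almosteverywhererange} to $f = g - h$ with the measure $\mu$, but that lemma (Theorem 1.40 of Rudin, as stated in the paper) requires $\mu$ to be a \emph{finite} measure, whereas here $\mu$ is only assumed $\sigma$-finite. The conclusion is still true, but the citation's hypotheses are not met, so you must say how to bridge the gap. The routine fix: write $S = \bigcup_{n} S_n$ with $\mu(S_n) < \infty$, note that $\int_A f \,\di\mu = 0$ for every measurable $A \subseteq S_n$, apply \autoref{lem:almosteverywhererange} with $E = \{0\}$ on each restricted finite measure space to get $f = 0$ $\mu$-almost everywhere on $S_n$, and conclude on $S$ because a countable union of $\mu$-null sets is $\mu$-null. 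Alternatively you can avoid \autoref{lem:almosteverywhererange} entirely: for each $k$, taking $A = \{s \in S : f(s) > 1/k\}$ gives $0 = \int_A f \,\di\mu \geq \mu(A)/k$, so $\mu(\{f > 0\}) = 0$, and symmetrically $\mu(\{f < 0\}) = 0$; this argument needs no finiteness of $\mu$ at all. With either repair the proof is complete.
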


\noindent
Accordingly, \phantom{aaaaaaaaaaaaaaaaaaaaaaaaaaaaaaaaaaaaaa}
$$
\lm{\alpha}{\mu}
$$
denotes an arbitrary choice of $g$ for $\alpha$, $\mu$ as in \autoref{lem:radonnikodymsigned},
and is referred to as \emph{the Radon-Nikodym derivative of $\alpha$ with respect to $\mu$}.
When $\alpha$ is positive in addition to finite,
this definition coincides with the definition made after \autoref{lem:RadonNikodym} and won't cause any ambiguity.

The following lemma is a summary of some useful facts that can be found in Chapter 6 of Rudin \cite{Rudin1987Real}:

\begin{lemma}\label{lem:tvisintegralofab}
    The total variation $|\alpha|$ of a finite signed measure $\alpha$ on $(S, \F)$
    is a finite positive measure on $(S, \F)$.
    There exists measurable $h: S \to \{-1, 1\}$ such that \phantom{aaaaaaaaaaaaaaaaaaaaaaaaaaaaaaaaaaaaaa}
    $$
    \alpha(A) = \int_{A} h \,\di|\alpha|
    $$
    for all $A \in \F$.
    Moreover, if \phantom{aaaaaaaaaaaaaaaaaaaaaaaaaaaaaaaaaaaaaa}
    $$
    \alpha(A) = \int_{A} g \,\di\mu
    $$
    for all $A \in \F$ for some measurable $g: S \to \rr$ and positive measure $\mu$ on $(S, \F)$, then \phantom{aaaaaaaaaaaaaaaaaaaaaaaaaaaaaaaa}
    $$
    |\alpha|(A) = \int_{A} |g| \,\di\mu
    $$
    for all $A \in \F$.
\end{lemma}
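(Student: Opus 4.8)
The plan is to establish the three assertions of the lemma in sequence, treating the finiteness of $|\alpha|$ as the one genuinely delicate point and deriving everything else from it rather formally. First I would verify that $|\alpha|$ is a positive measure. Nonnegativity is immediate, since the trivial partition $\{A\}$ is admissible in \autoref{def:totalvariationofsignedmeasures}, giving $|\alpha|(A) \geq |\alpha(A)| \geq 0$. Countable additivity over a pairwise disjoint union $A = \bigcup_i A_i$ follows from the usual pair of inequalities: refining an arbitrary partition of $A$ by intersecting its blocks with the $A_i$ and applying the triangle inequality yields $|\alpha|(A) \leq \sum_i |\alpha|(A_i)$, while concatenating near-optimal partitions of the individual $A_i$ yields $|\alpha|(A) \geq \sum_i |\alpha|(A_i)$.

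The crux, and the step I expect to be the main obstacle, is showing $|\alpha|(S) < \infty$. I would prove the extraction claim that whenever $|\alpha|(E) = \infty$ one can split $E = E' \sqcup E''$ with $|\alpha(E')| > 1$ and $|\alpha|(E'') = \infty$: starting from a partition of $E$ whose variation sum is very large, grouping its blocks according to the sign of $\alpha$ produces a sub-union on which $|\alpha(\cdot)|$ exceeds any prescribed bound, and subtracting off this piece while invoking the additivity of $|\alpha|$ from the first paragraph isolates a remaining piece still of infinite total variation. Iterating the claim would manufacture pairwise disjoint sets $A_1, A_2, \ldots$ with $|\alpha(A_i)| > 1$ for all $i$; but then the series $\sum_i \alpha(A_i)$ cannot converge, contradicting the absolute convergence of $\sum_i \alpha(A_i) = \alpha\big(\bigcup_i A_i\big)$ that is guaranteed by the finiteness of $\alpha$. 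Hence $|\alpha|(S) < \infty$; everything after this is comparatively routine.

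Next I would prove the third assertion in the stronger, fully general form that for \emph{any} representation $\alpha(A) = \int_A g \,\di\mu$ with $\mu$ a positive measure, one has $|\alpha|(A) = \int_A |g| \,\di\mu$. The inequality ``$\leq$'' comes from $|\alpha(E_j)| = \big|\int_{E_j} g \,\di\mu\big| \leq \int_{E_j} |g| \,\di\mu$ summed over an arbitrary partition $\{E_j\}$ of $A$, together with countable additivity of the integral of $|g|$. The reverse inequality is witnessed by the single admissible partition $A = (A \cap \{g \geq 0\}) \sqcup (A \cap \{g < 0\})$: on the first block $\alpha(\cdot) = \int |g|\,\di\mu \geq 0$ and on the second $-\alpha(\cdot) = \int |g|\,\di\mu$, so the two contributions $|\alpha(\cdot)|$ sum exactly to $\int_A |g| \,\di\mu$. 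This argument needs neither integrability nor finiteness, so it stands independently and already settles the ``moreover'' clause.

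Finally, for the existence of $h$, I would use that $|\alpha|$ is finite (hence $\sigma$-finite) and that a finite $\alpha$ is absolutely continuous with respect to $|\alpha|$, so \autoref{lem:radonnikodymsigned} supplies a measurable $g : S \to \rr$ with $\alpha(A) = \int_A g \,\di|\alpha|$. Applying the identity of the previous paragraph with $\mu = |\alpha|$ gives $\int_A (1 - |g|) \,\di|\alpha| = 0$ for every $A$; testing this on $\{|g| > 1\}$ and on $\{|g| < 1\}$ forces $|g| = 1$ $|\alpha|$-almost everywhere. Redefining $g$ to the value $1$ on the exceptional null set produces a measurable $h : S \to \{-1, 1\}$ with unchanged integrals, whence $\alpha(A) = \int_A h \,\di|\alpha|$. (Alternatively, the bound $|g| \leq 1$ almost everywhere could be obtained directly from \autoref{lem:almosteverywhererange} applied to the closed set $[-1, 1]$, but the route through the general identity above is self-contained and sidesteps any integrability bookkeeping.)
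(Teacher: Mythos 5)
Your proof is correct, but it should be said up front that the paper does not prove this lemma at all: it is stated as a summary of facts from Chapter 6 of Rudin \cite{Rudin1987Real} (Theorems 6.2--6.4, 6.12 and 6.13 there), so your proposal is being compared against a citation rather than an argument. What you have written is essentially a self-contained reconstruction of that chapter, with one structural difference worth noting. Rudin proves the polar decomposition (the $h$ with $|h|=1$) \emph{first}, using the Radon--Nikodym theorem together with the averaging lemma that appears in the paper as \autoref{lem:almosteverywhererange}, and only afterwards deduces the identity $|\alpha|(A)=\int_A |g|\,\di\mu$ from it. You reverse the order: your sign-partition argument proves the ``moreover'' identity directly for an arbitrary representation $\alpha(A)=\int_A g\,\di\mu$ (the inequality ``$\leq$'' by the triangle inequality over partitions, ``$\geq$'' witnessed by the single partition $A\cap\{g\geq 0\}\,\sqcup\,A\cap\{g<0\}$), and the polar decomposition then falls out by applying this identity to the Radon--Nikodym derivative of $\alpha$ with respect to $|\alpha|$, supplied by \autoref{lem:radonnikodymsigned}, which precedes this lemma in the paper, so there is no circularity. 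This ordering is arguably cleaner than Rudin's, and your parenthetical alternative (via \autoref{lem:almosteverywhererange}) is the route that mirrors Rudin most closely.

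One step in your finiteness argument is stated loosely. In the extraction claim, after grouping blocks by sign you obtain $E'\subseteq E$ with $|\alpha(E')|$ exceeding any prescribed bound, and you then assert that the complement $E''=E\setminus E'$ still has infinite total variation. That does not follow as written: additivity of $|\alpha|$ only gives that \emph{at least one} of $E'$, $E''$ has infinite variation, and it may well be $E'$. The standard repair, which your ``prescribed bound'' phrasing almost contains, is to take the bound larger than $1+|\alpha(E)|$, so that $|\alpha(E'')|\geq |\alpha(E')|-|\alpha(E)|>1$ as well; then \emph{both} pieces satisfy the measure-value condition, and you simply name $E''$ whichever of the two has infinite variation. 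With that one-line fix the iteration and the contradiction with the absolute convergence of $\sum_i \alpha(A_i)$ go through exactly as you describe, and the rest of the proof stands.
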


\noindent
In particular, the range of the total variation of a finite signed measure,
thus also the range of the finite signed measure, is actually bounded.

\begin{proposition}\label{prp:signedrdchange}
    Let $\alpha$ be a finite signed measure on $(S, \F)$.
    Let $\mu$, $\mu'$ be $\sigma$-finite positive measures on $(S, \F)$
    such that $\mu'$ dominates $\alpha$, $\mu$ dominates $\mu'$. Then $\mu$ dominates $\alpha$ and
    $$
    \lm{\alpha}{\mu} = \lm{\alpha}{\mu'}\lm{\mu'}{\mu}
    $$
    $\mu$-almost everywhere.
\end{proposition}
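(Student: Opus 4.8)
The plan is to first dispatch the domination claim by a null-set chase, then reduce the product formula to a single integral identity that I verify by changing the measure of integration and appealing to the uniqueness clause of the signed Radon--Nikodym theorem. For domination: if $\mu(A) = 0$, then $\mu'(A) = 0$ because $\mu$ dominates $\mu'$, whence $\alpha(A) = 0$ because $\mu'$ dominates $\alpha$; so $\mu$ dominates $\alpha$, and \autoref{lem:radonnikodymsigned} guarantees that $\lm{\alpha}{\mu}$ exists. I would then name the two given derivatives $g \coloneqq \lm{\alpha}{\mu'}$, which exists by \autoref{lem:radonnikodymsigned} since $\mu'$ dominates the finite signed measure $\alpha$, and $k \coloneqq \lm{\mu'}{\mu}$, which exists and is valued in $[0, \infty)$ by \autoref{lem:RadonNikodym} since $\mu$ dominates the $\sigma$-finite positive measure $\mu'$. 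The claim then reduces to showing that $\alpha(A) = \int_A g k \,\di\mu$ for every $A \in \F$: granting this, the uniqueness part of \autoref{lem:radonnikodymsigned} forces $\lm{\alpha}{\mu} = g k = \lm{\alpha}{\mu'}\lm{\mu'}{\mu}$ $\mu$-almost everywhere, which is the desired identity.

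To obtain $\alpha(A) = \int_A g k \,\di\mu$, I would start from the defining relation $\alpha(A) = \int_A g \,\di\mu'$ and convert the $\mu'$-integral into a $\mu$-integral using \autoref{lem:measurechange}, applied with its $\nu$ taken to be our $\mu'$ (so that its $\lm{\nu}{\mu}$ is exactly $k$) and its integrand taken to be $\indi{A}\, g$. The lone hypothesis to verify is that $\indi{A}\, g$ is $\mu'$-integrable, and here the finiteness of $\alpha$ is what makes the argument close: by \autoref{lem:tvisintegralofab} one has $\int_S |g| \,\di\mu' = |\alpha|(S) < \infty$, so a fortiori $\int_S |\indi{A}\, g| \,\di\mu' < \infty$ and \autoref{lem:measurechange} yields $\int_A g \,\di\mu' = \int_A g k \,\di\mu$. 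Chaining this with $\alpha(A) = \int_A g \,\di\mu'$ gives the required identity for every $A$.

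The one delicate point is that $g$ need not have constant sign, so \autoref{lem:measurechange} has to be invoked in its form valid for signed integrands rather than its $f > 0$ branch; this is precisely why I route the argument through the integrability bound $\int_S |g| \,\di\mu' = |\alpha|(S) < \infty$ supplied by finiteness of $\alpha$. Should one prefer to avoid any reliance on the signed version, the same conclusion follows by writing $g = g^+ - g^-$, applying the nonnegative case of \autoref{lem:measurechange} to each part, and subtracting---legitimately, since both resulting integrals are finite---though the integrability route is cleaner and sidesteps the case split.
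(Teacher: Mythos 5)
Your proposal is correct and follows essentially the same route as the paper's own proof: both express $\alpha(A) = \int_A \lm{\alpha}{\mu'} \,\di\mu'$, convert this to a $\mu$-integral via \autoref{lem:measurechange} using the integrability bound $\int |\lm{\alpha}{\mu'}| \,\di\mu' = |\alpha|(\cdot) < \infty$ from \autoref{lem:tvisintegralofab}, and conclude by the uniqueness clause of \autoref{lem:radonnikodymsigned}. Your explicit null-set chase for the domination claim and your remark on invoking the signed-integrand branch of \autoref{lem:measurechange} merely spell out steps the paper treats as obvious or leaves implicit.
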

\begin{proof}
    The fact that $\mu$ dominates $\alpha$ is obvious. For the rest,
    one just note that
    \begin{align*}
        \alpha(A)
        & = \int_A \lm{\alpha}{\mu'} \,\di\mu' \\
        & = \int_A \lm{\alpha}{\mu'} \lm{\mu'}{\mu} \,\di\mu
    \end{align*}
    for all $A \in \F$,
    where the second equality follows by \autoref{lem:measurechange}, for which
    the fact that \phantom{aaaaaaaaaaaaaaaaaaaaaaaaaaaaaaaaaaaaaaaaaaaa}
    $$
    \int_A \bigg|\lm{\alpha}{\mu'}\bigg| \,\di\mu' = |\alpha|(A) < \infty
    $$
    follows by \autoref{lem:tvisintegralofab}.
\end{proof}

\noindent
Given finite signed measure $\alpha$ on $(S, \F)$, $\sigma$-finite positive measure $\mu$ dominating $\alpha$ on $(S, \F)$,
and $f: S \to \rr$ such that $\int_S |f| \,\di|\alpha| < \infty$,
one has
$$
\int_A \bigg|f \cdot \lm{\alpha}{\mu}\bigg| \,\di\mu =\int_A |f| \cdot \bigg|\lm{\alpha}{\mu}\bigg| \,\di\mu
    = \int_A |f| \,\di|\alpha| < \infty
$$
for any $A \in \F$, where the last equality follows by \autoref{lem:tvisintegralofab} and \autoref{lem:measurechange},
so that (note that $\mu$ dominates $|\alpha|$)
$$
\int_A f \cdot \lm{\alpha}{\mu} \,\di\mu = \int_A f \cdot \lm{\alpha}{|\alpha|} \cdot \lm{|\alpha|}{\mu} \,\di\mu
    = \int_A f \cdot \lm{\alpha}{|\alpha|} \,\di|\alpha|
$$
for any $A \in \F$, 
where the first equality follows by \autoref{prp:signedrdchange} and
where the second equality follows by \autoref{lem:measurechange}.
Thus the Lebesgue integrals with respect to finite signed measures in the following are well-defined:

\begin{definition}\label{def:integralwrtsignedmeasure}
    Let $\alpha$ be a finite signed measure on $(S, \F)$
    and let $f: S \to \rr$ be a measurable function such that $\int_S |f| \,\di|\alpha| < \infty$.
    \emph{The Lebesgue integral of $f$ with respect to $\alpha$}, written as $\int_S f \,\di\alpha$, is defined by
    $$
    \int_S f \,\di\alpha = \int_S f \cdot \lm{\alpha}{\mu} \,\di\mu
    $$
    where $\mu$ is any $\sigma$-finite positive measure dominating $\alpha$ on $(S, \F)$.
\end{definition}

\noindent
The integral defined by \autoref{def:integralwrtsignedmeasure} possesses similar properties that ordinary Lebesgue integrals hold
such as linearity. However we will not list those properties here because we will do all the computations about such integrals
by applying the definition and manipulating ordinary Lebesgue integrals with respect to a $\sigma$-finite positive measure.

\begin{definition}\label{def:ndsignedmeasure}
    An $n$-dimensional signed measure on a measurable space $(S, \F)$ is a function
    $\valpha : \F \to \Rbar^n$ so that
    $$
    \valpha(A) = (\alpha_1(A), \alpha_2(A), \ldots, \alpha_n(A))
    $$
    for all $A \in \F$, where each $\alpha_i(A)$, $1 \leq i \leq n$, is a signed measure on $(S, \F)$.
\end{definition}

\noindent
We write $\valpha = (\alpha_1, \alpha_2, \ldots, \alpha_n)$ to denote that $\valpha$ is defined as in \autoref{def:ndsignedmeasure},
and call $\alpha_i$, $1 \leq i \leq n$, the $i$-th component of $\valpha$.
An $n$-dimensional signed measure $\valpha$ is \emph{finite} if each component of $\valpha$ is finite.

If each component of an $n$-dimensional signed measure $\vmu$ is positive,
then $\vmu$ is simply called an \emph{$n$-dimensional measure}, or an \emph{$n$-dimensional positive measure} for emphasis.
A $1$-dimensional signed or positive measure reduces to a signed or positive measure, respectively.
We will also call an $n$-dimensional (finite, signed or positive) measure a (finite, signed or positive) vector measure
when the dimension is not emphasized.

For an $n$-dimensional finite signed measure $\valpha = (\alpha_1, \alpha_2, \ldots, \alpha_n)$ on $(S, \F)$,
we define \phantom{aaaaaaaaaaaaaaaaaaaaaaaaaaaaa}
$$
|\valpha| \coloneqq \sum_{i = 1}^{n} |\alpha_i|
$$
to be the total variation of $\valpha$ (with respect to the $1$-norm).
It is easy to check that $|\valpha|$ is a measure on $(S, \F)$,
and that $\alpha_i$ is absolutely continuous with respect to $|\valpha|$, $1 \leq i \leq n$.
Moreover, similar as when $\valpha$ is $1$-dimensional,
a positive measure $\mu$ on $(S, \F)$ dominates $|\valpha|$ if and only if $\mu$ dominates $\valpha$ (i.e., dominates each $\alpha_i$),
and there exists $\lm{\valpha}{\mu}: S \to \rr^n$ where
$$
\lm{\valpha}{\mu} = \Big(\lm{\alpha_1}{\mu}, \lm{\alpha_2}{\mu}, \ldots, \lm{\alpha_n}{\mu}\Big)
$$
if $\mu$ is in addition $\sigma$-finite.
We note that $\lm{\valpha}{\mu}$ is measurable by the discussion following \autoref{lem:preimagegeneratespreimage}.
For a shorthand, and to more clearly signify the presence of a vector,
we use \phantom{aaaaaaaaaaaaaaaaaaaaaaaaaaaaaaaaa}
$$
\vdamu
$$ 
to denote $\lm{\valpha}{\mu}$.
We also apply the notation of integrals of vector-valued functions:
given $\sigma$-finite positive measure $\mu$ on $(S, \F)$, let
\begin{equation}\label{eqn:intofvecfunc}
    \int_S \vec{f} \,\di\mu \coloneqq \Big(\int_S f_1 \,\di\mu, \int_S f_2 \,\di\mu, \ldots, \int_S f_n \,\di\mu\Big)
\end{equation}
for $\vec{f} = (f_1, f_2, \dots, f_n): S \to \rr^n$ where each $f_i: S \to \rr$
either is non-negative or satisfies $\int_S |f_i| \,\di\mu < \infty$.
By definition of $\vdamu$, one has
$$
\valpha(A) = \int_A \vdamu \,\di\mu
$$
for all $A \in \F$, if $\mu$ dominates $\valpha$.

Moreover, for $n$-dimensional finite signed measure $\valpha$ on $(S, \F)$, we define
\begin{equation}\label{eqn:intonvecmeasures}
    \int_{S} f \,\di\valpha \coloneqq \int_S f \cdot \vdamu \,\di\mu
\end{equation}
for all measurable function $f: S \to \rr$ 
such that $\int_S |f| \,\di|\valpha| < \infty$
(which is especially true when $f$ is bounded),
where $\mu$ is any $\sigma$-finite positive measure dominating $\valpha$ on $(S, \F)$.
This definition is valid for the same reason that \autoref{def:integralwrtsignedmeasure} is valid for,
from which one also has
$$
\int_{S} f \,\di\valpha = \Big( \int_S f \,\di\alpha_1, \ldots, \int_S f \,\di\alpha_n \Big)
$$
for $\valpha = (\alpha_1, \ldots, \alpha_n)$.

\begin{definition}\label{def:nonatomic}
    Given a signed measure $\alpha$ on $(S, \F)$,
    a set $A \in \F$ is an \emph{atom} of $\alpha$ if $\alpha(A) \neq 0$
    and if for any $B \in \F$ either $\alpha(A \cap B) = \alpha(A)$ or $\alpha(A \cap B) = 0$.
    An $n$-dimensional signed measure $\valpha$ is \emph{non-atomic} (or \emph{continuous}) if none of its components has atoms.
\end{definition}

\noindent
We note that in particular the Lebesgue measure on $\rr^n$ is non-atomic.
Moreover, we claim the following proposition without its elementary proof:
\begin{proposition}\label{prp:atomsofvecmeasures}
    For finite signed measure $\alpha$ and $n$-dimensional finite signed measure $\valpha = (\alpha_1, \ldots, \alpha_n)$
    on $(S, \F)$, the following properties hold:
    \begin{enumerate}[(i)]
        \item Atoms of $\alpha$ are atoms of $|\alpha|$, and vice versa;
        \item For $i \in [n]$, each atom $A$ of $|\valpha|$ is an atom of $\alpha_i$ if $|\alpha_i|(A) > 0$;
        \item For $i \in [n]$, each atom of $\alpha_i$ contains an atom of $|\valpha|$;
        \item If $f_k: S \to \rr$ is measurable for each $k$, $1 \leq k \leq m$, and if $A$ is an atom of $\alpha$,
            then there exists $\vc \in \rr^m$ such that $\vec{f}(s) = \vc$ for $s \in A$ $|\alpha|$-almost everywhere,
            where $\vec{f} = (f_1, \ldots, f_m)$.
    \end{enumerate}
\end{proposition}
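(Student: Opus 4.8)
The plan is to treat part (i) as the foundation and derive the other three parts from it. For (i) I would use the sign density $h : S \to \{-1,1\}$ of \autoref{lem:tvisintegralofab}, so that $\alpha(A) = \int_A h \,\di|\alpha|$ for all $A \in \F$. The crux is that $h$ is $|\alpha|$-almost-everywhere constant on any atom: when $A$ is an atom of $|\alpha|$ this is immediate, since $A \cap \{h = 1\}$ then has $|\alpha|$-measure $0$ or $|\alpha|(A)$; and when $A$ is an atom of $\alpha$ it holds because otherwise $A \cap \{h = 1\}$ and $A \cap \{h = -1\}$ would both carry positive $|\alpha|$-mass, making $\alpha$ strictly positive on the first subset and strictly negative on the second, while the indivisibility of $A$ forces each of these two values into $\{0, \alpha(A)\}$ — impossible. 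Once $h$ is constant on $A$ we have $\alpha = \pm|\alpha|$ on every subset of $A$, so $\alpha(A) \ne 0 \Leftrightarrow |\alpha|(A) > 0$ and the indivisibility conditions for $\alpha$ and for $|\alpha|$ become literally equivalent on $A$, yielding both directions of (i).

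Part (iv) follows from (i) together with the standard fact that a measurable function is almost-everywhere constant on an atom of a finite positive measure. By (i), $A$ is an atom of the finite positive measure $|\alpha|$, so it suffices to show each $f_k$ is $|\alpha|$-a.e.\ constant on $A$ and then intersect the finitely many conull sets to obtain $\vec{f} = \vec{c}$ on $A$ $|\alpha|$-almost everywhere. For a single $f_k$ I would consider the level sets $A \cap \{f_k \le t\}$, each of $|\alpha|$-measure $0$ or $|\alpha|(A)$, and set $c_k := \inf\{t : |\alpha|(A \cap \{f_k \le t\}) = |\alpha|(A)\}$; finiteness of $|\alpha|$ and continuity of measure from above and below force $c_k$ to be finite and $f_k = c_k$ $|\alpha|$-a.e.\ on $A$ (\autoref{lem:almosteverywhererange} could be substituted here). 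Part (ii) is the quickest: writing $|\valpha| = \sum_j |\alpha_j|$, if $A$ is an atom of $|\valpha|$ then for any $B$ the fact that $|\valpha|(A \cap B) \in \{0, |\valpha|(A)\}$ forces, summand by summand, $|\alpha_i|(A \cap B) \in \{0, |\alpha_i|(A)\}$; combined with the hypothesis $|\alpha_i|(A) > 0$ this exhibits $A$ as an atom of $|\alpha_i|$, hence of $\alpha_i$ by (i).

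The main obstacle is (iii), where one must manufacture an atom of $|\valpha|$ inside a given atom $A$ of $\alpha_i$; the difficulty is that the other components $|\alpha_j|$, $j \ne i$, may subdivide $A$ even though $|\alpha_i|$ cannot. My plan is a \emph{minimal-carrier} argument. Set $\nu := \sum_{j \ne i} |\alpha_j|$, a finite positive measure, and consider the carriers $B \subseteq A$ of the $\alpha_i$-mass, i.e.\ those with $|\alpha_i|(A \setminus B) = 0$; let $m := \inf \nu(B)$ over all such $B$. Taking a minimizing sequence and replacing it by its running intersections yields a decreasing sequence of carriers whose intersection $B^*$ is again a carrier, since a countable union of $|\alpha_i|$-null sets is null, and satisfies $\nu(B^*) = m$ by continuity from above. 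I would then verify that $B^*$ is an atom of $|\valpha|$: it has $|\valpha|(B^*) = |\alpha_i|(A) + m > 0$, and for any $C \subseteq B^*$ the indivisibility of $A$ under $|\alpha_i|$ gives $|\alpha_i|(C) \in \{0, |\alpha_i|(A)\}$; in the first case $B^* \setminus C$ is still a carrier, so minimality forces $\nu(C) = 0$ and thus $|\valpha|(C) = 0$, while in the second case $C$ itself is a carrier, so minimality forces $\nu(C) = m$ and thus $|\valpha|(C) = |\valpha|(B^*)$. Hence $B^* \subseteq A$ is the desired atom of $|\valpha|$. The points requiring care are the finiteness of all total variations — so that $\nu(A) < \infty$ and $m$ is real — and the continuity-of-measure steps, both guaranteed by \autoref{lem:tvisintegralofab}.
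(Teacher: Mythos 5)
Your proposal is correct in all four parts, but note that the paper itself states this proposition ``without its elementary proof,'' so there is no in-paper argument to compare against; your write-up in effect supplies the omitted details. Parts (i) and (ii) are handled exactly as one would hope: the sign density $h$ of \autoref{lem:tvisintegralofab} is forced to be $|\alpha|$-a.e.\ constant on an atom of either $\alpha$ or $|\alpha|$ (the opposite-signs contradiction you give for atoms of $\alpha$ is the right one), and the summand-by-summand rigidity of $|\valpha| = \sum_j |\alpha_j|$ on an atom follows by passing to complements inside $A$, which uses finiteness. For (iv), your level-set construction of $c_k$ is correct, but the parenthetical claim that \autoref{lem:almosteverywhererange} ``could be substituted'' should be dropped or qualified: that lemma requires $\int_S |f|\,\di\mu < \infty$, and the $f_k$ are merely measurable, so the level-set argument is not just an alternative but the one that actually works. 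The genuinely nontrivial part is (iii), and your minimal-carrier argument is sound: carriers exist ($A$ itself), $m$ is finite since $\nu$ is a finite measure, the running-intersection trick keeps the limit set $B^*$ a carrier by countable subadditivity of $|\alpha_i|$ over the null complements, continuity from above gives $\nu(B^*) = m$, and the two-case verification that every measurable $C \subseteq B^*$ satisfies $|\valpha|(C) \in \{0, |\valpha|(B^*)\}$ (either $B^* \setminus C$ or $C$ is again a carrier, so minimality pins down $\nu(C)$) is complete and correct.
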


\noindent
Given an $n$-dimensional finite signed measure $\valpha$,
define two atoms $A_1$, $A_2$ of $|\valpha|$ to be in the same class if $|\valpha|(A_1 \cap A_2) > 0$.
There are at most countably many different classes since $|\valpha|$ is bounded.
Let $\{A_i\}_{i \in \nn}$ be a collection of atoms such that there is an $A_i$ in each class
and such that $A_i$, $A_j$ belong to different classes for $i \neq j$.
We can moreover assume that the collection of $A_i$ are mutually disjoint.
Let \phantom{aaaaaaaaaaaaaaaaaaaaaaaaaaaaaaaaaaaa}
$$
A = \bigcup_{i = 1}^\infty A_i, \quad B = S \backslash A
$$
The restriction of $\valpha$ to $\{E \cap B \,:\, E \in \F\}$ is therefore non-atomic by property (iii) in \autoref{prp:atomsofvecmeasures}.
We call this restriction of $\valpha$ \emph{the non-atomic part of $\valpha$}
(with respect to the particular set $\{A_i\}_{i \in \nn}$ of chosen atoms),
and call the restriction of $\valpha$ to $\{E \cap A \,:\, E \in \F\}$ \emph{the purely atomic part of $\valpha$}.
If $|\valpha|(B) = 0$, then changing the choices of $A_i$ will not alter this fact,
and we just call $\valpha$ \emph{purely atomic} in this case.

The following lemma can be easily derived from results by Halmos \cite{Halmos1948TheRange}:
\begin{lemma}\label{lem:nonatomicvmisconvex}
    Let $\valpha$ be a non-atomic finite signed vector measure on $(S, \F)$.
    Then for any $A \in \F$ there exists a map $\varphi$ from $[0, 1]$ to subsets of $A$ in $\F$
    such that $\varphi(0) = \emp$, $\varphi(1) = A$,
    $\varphi(a) \subseteq \varphi(b)$ if $a < b$,
    and such that $\valpha(\varphi(\lam)) = \lam\valpha(A)$ for all $\lam \in [0, 1]$.
\end{lemma}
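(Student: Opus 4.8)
The plan is to combine the convexity of the range of a non-atomic vector measure (the content of Lyapunov's and Halmos's theorems, \cite{Lyapunov1940Ser, Halmos1948TheRange}) with a standard nested dyadic interpolation. The single nontrivial input I will invoke is the following: for a non-atomic finite signed vector measure, the range $\{\valpha(E) : E \in \F\}$ is convex. Note that for any $B \in \F$ the restriction of $\valpha$ to $\{E \cap B : E \in \F\}$ is again a non-atomic finite signed vector measure (an atom of a restricted component would be an atom of the original component), so its range $\{\valpha(E) : E \in \F,\ E \subseteq B\}$ is convex as well. Applying convexity to the midpoint of $\valpha(\emp) = \mathbf{0}$ and $\valpha(B)$ yields the key \emph{halving} consequence: for every $B \in \F$ there is a measurable $B' \subseteq B$ with $\valpha(B') = \tfrac{1}{2}\valpha(B)$.

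Next I would define $\varphi$ on the dyadic rationals of $[0,1]$ by induction on the denominator, maintaining two invariants at each level $m$: the sets $\varphi(k/2^m)$, $0 \le k \le 2^m$, are nested in $k$ and satisfy $\valpha(\varphi(k/2^m)) = (k/2^m)\,\valpha(A)$. The base case sets $\varphi(0) = \emp$ and $\varphi(1) = A$. To pass from level $m$ to level $m+1$ I only need to define the values at odd numerators: for each $j$, the ``gap'' $G_j := \varphi((j+1)/2^m) \setminus \varphi(j/2^m)$ has $\valpha(G_j) = (1/2^m)\valpha(A)$, so applying the halving consequence to $G_j$ produces $H_j \subseteq G_j$ with $\valpha(H_j) = \tfrac12\valpha(G_j)$, and I set $\varphi((2j+1)/2^{m+1}) := \varphi(j/2^m) \cup H_j$. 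This lies between $\varphi(j/2^m)$ and $\varphi((j+1)/2^m)$ and has the required measure $\big((2j+1)/2^{m+1}\big)\valpha(A)$; previously defined dyadic values are never altered, so both invariants persist, and any two dyadic values are globally nested (pass to a common denominator).

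Finally I would extend $\varphi$ to all of $[0,1]$ by setting $\varphi(\lambda) := \bigcup \{\varphi(d) : d \text{ dyadic},\ d \le \lambda\}$, a countable union of subsets of $A$ and hence a measurable subset of $A$. Monotonicity $\varphi(a) \subseteq \varphi(b)$ for $a < b$ is immediate from the definition, and at a dyadic $\lambda$ the global nesting shows this agrees with the value already constructed. For arbitrary $\lambda$ I would pick dyadics $d_n \uparrow \lambda$, so that $\varphi(\lambda) = \bigcup_n \varphi(d_n)$, and invoke continuity from below of each component signed measure (a direct consequence of countable additivity) to conclude $\valpha(\varphi(\lambda)) = \lim_n \valpha(\varphi(d_n)) = \lim_n d_n\,\valpha(A) = \lambda\,\valpha(A)$. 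The boundary conditions $\varphi(0) = \emp$ and $\varphi(1) = A$ are inherited from the dyadic construction.

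The only genuine difficulty is the convexity of the range, which is precisely where Halmos's theorem enters; everything after that is bookkeeping. The one point requiring mild care is maintaining monotonicity simultaneously with the exact scaling identity across the limit extension, which is handled cleanly by indexing the final union over all dyadics $d \le \lambda$ rather than choosing a set $\varphi(\lambda)$ independently for each $\lambda$.
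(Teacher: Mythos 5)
Your proposal is correct and is precisely the derivation the paper has in mind: the paper offers no written proof, only the remark that the lemma ``can be easily derived from results by Halmos,'' and your argument---Lyapunov--Halmos convexity of the range of a (restricted) non-atomic signed vector measure, a halving consequence, dyadic interpolation with nested sets, and extension to all of $[0,1]$ by increasing unions together with continuity from below of each component---is the standard way to carry out that derivation. All the details check out, including the observation that restrictions remain non-atomic and that the union-over-dyadics definition secures monotonicity and consistency at dyadic points simultaneously.
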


\section{The Reach of Lorenz Hulls}\label{sec:lorenzhulls}

\begin{definition}\label{def:ndsignedmeasureLD}
    The \emph{Lorenz hull} $\ld(\valpha)$ of an $n$-dimensional finite signed measure $\valpha$
    is the convex hull of the range of $\valpha$.
\end{definition}

\noindent
We also call the range of $\valpha$
the \emph{Lorenz skeleton} of $\valpha$, notated $\lsk(\valpha)$, wherefrom $\ld(\valpha) = \convhull(\lsk(\valpha))$.
Classical results (ref. Halmos \cite{Halmos1948TheRange}) show that $\lsk(\valpha)$
is a centrally symmetric, closed and bounded set containing $\vec{0}$,
and that if $\valpha$ is non-atomic then
$\lsk(\valpha)$ is moreover convex, i.e., $\ld(\valpha) = \lsk(\valpha)$.
Consequently, $\ld(\valpha)$ is a centrally symmetric, closed, bounded and convex set containing $\vec{0}$,
where in particular the closedness follows from \autoref{prp:convclsisclsconv}.

For an $n$-dimensional finite signed measure $\valpha$ on $(S, \F)$
and $\sigma$-finite positive measure $\mu$ dominating $\valpha$ on $(S, \F)$, let
$$
\Diamu{\star} \coloneqq \{s \in S \,:\, \dpr{\vxs}{\vdamu(s)} \star 0\}
$$
for $\vxs \in \rr^n$, $\star \in \{>, \geq, =, \leq, <\}$.
We note that $\Diamu{\star} \in \F$ for any $\vxs$ and $\star$
since $\{\vz \in \rr^n \,:\, \dpr{\vxs}{\vz} \star 0\}$
is either a hyperplane or a (closed or open) half-space in $\rr^n$
and since $\vdamu$ is measurable.

\begin{proposition}\label{prp:threecases}
    Let $\valpha$ be an $n$-dimensional finite signed measure on $(S, \F)$,
    let $\mu$ be a $\sigma$-finite positive measure dominating $\valpha$ on $(S, \F)$,
    and let $\vxs \in \rr^n$, $A \in \F$. If $\mu(A) > 0$, then
    $\dpr{\vxs}{\valpha(A)} > 0$, $\dpr{\vxs}{\valpha(A)} = 0$, $\dpr{\vxs}{\valpha(A)} < 0$
    if $A \subseteq \Diamu{>}$, $A \subseteq \Diamu{=}$, $A \subseteq \Diamu{<}$, respectively.
    If $\mu(A) = 0$ then $\dpr{\vxs}{\valpha(A)} = 0$.
\end{proposition}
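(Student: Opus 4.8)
The plan is to reduce all four assertions to the single identity
$$
\dpr{\vxs}{\valpha(A)} = \int_A \dpr{\vxs}{\vdamu} \,\di\mu,
$$
where $\dpr{\vxs}{\vdamu}$ denotes the scalar function $s \mapsto \dpr{\vxs}{\vdamu(s)}$, after which each claim becomes a statement about the sign of an integral whose integrand has a known sign on $A$. To obtain this identity I would start from the relation $\valpha(A) = \int_A \vdamu \,\di\mu$ recorded after the definition of $\vdamu$ (valid since $\mu$ dominates $\valpha$), read coordinate-wise as $\alpha_i(A) = \int_A \lm{\alpha_i}{\mu}\,\di\mu$, and then form the linear combination $\sum_i x_i^* \alpha_i(A)$. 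Pulling the finite sum inside the integral by linearity of the Lebesgue integral gives $\int_A \sum_i x_i^* \lm{\alpha_i}{\mu}\,\di\mu = \int_A \dpr{\vxs}{\vdamu}\,\di\mu$, as claimed; the integrand is integrable because $\valpha$, hence each $\alpha_i$, is finite.

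With this identity in hand the two degenerate cases are immediate. If $\mu(A) = 0$, then the integral over $A$ vanishes irrespective of the integrand, so $\dpr{\vxs}{\valpha(A)} = 0$. If $A \subseteq \Diamu{=}$, then by definition $\dpr{\vxs}{\vdamu(s)} = 0$ for every $s \in A$, so the integrand is identically zero on $A$ and the integral again vanishes; note that this case requires no hypothesis on $\mu(A)$.

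The substantive cases are $A \subseteq \Diamu{>}$ and $A \subseteq \Diamu{<}$ under the assumption $\mu(A) > 0$. Writing $g$ for the scalar integrand $s \mapsto \dpr{\vxs}{\vdamu(s)}$, in the first case $g > 0$ throughout $A$ and in the second $g < 0$ throughout $A$; since the second follows from the first applied to $-g$, it suffices to show that $g > 0$ on $A$ together with $\mu(A) > 0$ forces $\int_A g\,\di\mu > 0$. This is the crux, and the only step requiring genuine care: a strictly positive measurable integrand over a set of positive measure has a strictly positive integral. I would establish it by slicing. Set $A_k := \{s \in A : g(s) > 1/k\}$, so that $A = \bigcup_{k \geq 1} A_k$ with $A_1 \subseteq A_2 \subseteq \cdots$ (each $s \in A$ has $g(s) > 0$, hence $g(s) > 1/k$ for some $k$). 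Continuity from below of $\mu$, a consequence of countable additivity (\autoref{def:measure}), gives $\mu(A_k) \to \mu(A) > 0$, so $\mu(A_{k_0}) > 0$ for some $k_0$. Since $g \geq 0$ on $A$, monotonicity of the integral yields $\int_A g\,\di\mu \geq \int_{A_{k_0}} g\,\di\mu \geq \frac{1}{k_0}\,\mu(A_{k_0}) > 0$, which closes the argument. The only obstacle worth flagging is keeping the logic of this slicing argument clean; everything else is a direct application of linearity and the defining property of $\vdamu$.
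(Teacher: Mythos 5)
Your proof is correct and takes essentially the same route as the paper's: the same reduction to the identity $\dpr{\vxs}{\valpha(A)} = \int_A \dpr{\vxs}{\vdamu(s)} \,\mu(\di s)$, and the same slicing of $A$ into the sets $\{s \in A : \dpr{\vxs}{\vdamu(s)} > 1/k\}$ to extract a slice of positive measure on which the integrand is bounded below. The only cosmetic differences are that the paper splits the integral over $A_\veps$ and $A \backslash A_\veps$ where you invoke monotonicity, and that it dismisses the $=$ and $<$ cases as analogous where you handle them explicitly via $-g$.
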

\begin{proof}
    If $\mu(A) = 0$ then $\valpha(A) = 0$ since $\mu$ dominates $\valpha$, and then $\dpr{\vxs}{\valpha(A)} = 0$.
    For when $\mu(A) > 0$, we will only discuss the situation where $A \subseteq \Diamu{>}$ and omit the proof for the other situations.
    Let
    $$
    A_\veps = \{s \in A \,:\, \dpr{\vxs}{\vdamu(s)} > \veps\}
    $$
    for $\veps > 0$. Then $\mu(A_\veps) = \delta > 0$ for some $\veps > 0$
    since $A = \bigcup_{k = 1}^\infty A_{1/k}$. Thus
    \begin{align*}
        \dpr{\vxs}{\valpha(A)}
        & = \dpr{\vxs}{\valpha(A_\veps)} + \dpr{\vxs}{\valpha(A \backslash A_\veps)} \\
        & = \int_{A_\veps} \dpr{\vxs}{\vdamu(s)} \,\mu(\di s)
            + \int_{A \backslash A_\veps} \dpr{\vxs}{\vdamu(s)} \,\mu(\di s) \\
        & \geq \veps\delta + 0 > 0,
    \end{align*}
    which is the desired result.
\end{proof}

\begin{proposition}\label{prp:reachofld}
    Let $\valpha$ be an $n$-dimensional finite signed measure on $(S, \F)$
    and let $\mu$ be a $\sigma$-finite positive measure dominating $\valpha$ on $(S, \F)$.
    Then $\dual{\ld(\valpha)}{\vxs} = \dpr{\vxs}{\valpha(\Diamu{>})} = \dpr{\vxs}{\valpha(\Diamu{\geq})}$.
\end{proposition}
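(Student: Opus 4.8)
The plan is to reduce the reach to a supremum over measurable sets and then to exhibit the maximizing set explicitly. By the first identity in \eqref{eqn:reachconvexhull}, passing to the convex hull does not change the reach, so
\[
\dual{\ld(\valpha)}{\vxs} = \dual{\lsk(\valpha)}{\vxs} = \sup_{A \in \F} \dpr{\vxs}{\valpha(A)}.
\]
Writing $g(s) := \dpr{\vxs}{\vdamu(s)}$, the representation $\valpha(A) = \int_A \vdamu \,\di\mu$ together with linearity of the inner product gives $\dpr{\vxs}{\valpha(A)} = \int_A g \,\di\mu$ for every $A \in \F$ (the same representation already used in the proof of \autoref{prp:threecases}). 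It therefore suffices to show that $\sup_{A \in \F} \int_A g \,\di\mu$ is attained both at $A = \Diamu{>}$ and at $A = \Diamu{\geq}$.

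Before optimizing I would record that $g$ is $\mu$-integrable: since $|g(s)| \leq \sum_{i=1}^n |x^*_i|\,\big|\lm{\alpha_i}{\mu}(s)\big|$ and $\int_S \big|\lm{\alpha_i}{\mu}\big| \,\di\mu = |\alpha_i|(S) < \infty$ by \autoref{lem:tvisintegralofab}, we get $\int_S |g| \,\di\mu < \infty$. This finiteness is the one point requiring care: it is precisely what legitimizes the additive splitting of the integrals below and excludes any $\infty - \infty$ indeterminacy.

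For the optimization itself, fix $A \in \F$ and split it into the disjoint pieces $A \cap \Diamu{>}$ and $A \cap \Diamu{\leq}$. On the latter $g \leq 0$, so $\int_{A \cap \Diamu{\leq}} g \,\di\mu \leq 0$; on the former, $A \cap \Diamu{>} \subseteq \Diamu{>}$ with $g > 0$ throughout $\Diamu{>}$, so $\int_{A \cap \Diamu{>}} g \,\di\mu \leq \int_{\Diamu{>}} g \,\di\mu$. Summing these bounds yields $\int_A g \,\di\mu \leq \int_{\Diamu{>}} g \,\di\mu$ for all $A \in \F$, and the choice $A = \Diamu{>}$ attains equality; hence $\sup_{A \in \F} \int_A g \,\di\mu = \int_{\Diamu{>}} g \,\di\mu = \dpr{\vxs}{\valpha(\Diamu{>})}$.

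Finally, because $\Diamu{\geq}$ is the disjoint union of $\Diamu{>}$ and $\Diamu{=}$ and $g$ vanishes on $\Diamu{=}$, one has $\int_{\Diamu{\geq}} g \,\di\mu = \int_{\Diamu{>}} g \,\di\mu$, i.e.\ $\dpr{\vxs}{\valpha(\Diamu{\geq})} = \dpr{\vxs}{\valpha(\Diamu{>})}$, closing the chain of equalities. The main obstacle, as flagged above, is merely the integrability bookkeeping; once $g \in L^1(\mu)$ is established, the result is the elementary observation that $\int_A g \,\di\mu$ is maximized by integrating over exactly the region where $g$ is positive.
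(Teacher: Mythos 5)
Your proof is correct and follows essentially the same route as the paper's: both reduce the reach of the hull to the reach of the skeleton via the first identity of \eqref{eqn:reachconvexhull}, and both identify $\Diamu{>}$ (resp.\ $\Diamu{\geq}$) as the maximizer of $\dpr{\vxs}{\valpha(A)}$ over $A \in \F$ by a sign-of-the-integrand decomposition argument. The only difference is bookkeeping: the paper routes the sign facts through \autoref{prp:threecases}, whereas you re-derive them directly at the level of the integral $\int_A \dpr{\vxs}{\vdamu} \,\di\mu$ together with an explicit integrability check.
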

\begin{proof}
    \autoref{prp:threecases} implies that $\dpr{\vxs}{\valpha(\Diamu{>})} \geq \dpr{\vxs}{\valpha(B)}$
    for all $B \in \F$.
    Thus $\dual{\lsk(\valpha)}{\vxs} = \dpr{\vxs}{\valpha(\Diamu{>})}$
    since $\valpha(\Diamu{>}) \in \lsk(\valpha)$.
    But $\dual{\lsk(\valpha)}{\cdot} = \dual{\ld(\valpha)}{\cdot}$ by the left-hand of \eqref{eqn:reachconvexhull}.
    The same argument holds replacing $\Diamu{>}$ by $\Diamu{\geq}$.
\end{proof}

\section{The Lorenz Product}\label{sec:lorenzprod}

Recalling the definition of product of $\sigma$-finite measures from \autoref{lem:prodmeasure},
we have the following proposition as an analogue of \autoref{lem:prodmeasure} for finite signed measures:

\begin{proposition}\label{prp:prodoffinitesignedmeasures}
    Let $\alpha$, $\beta$ be finite signed measures on $(S, \F)$, $(T, \G)$, respectively,
    let $\mu$, $\nu$ be $\sigma$-finite positive measures dominating $\alpha$, $\beta$ on $(S, \F)$, $(T, \G)$, respectively,
    and let $\H$ be the $\sigma$-algebra such that $(S \times T, \H) = (S, \F) \times (T, \G)$.
    The function $\omega: \H \to \rr$ defined by
    $$
    \omega(E) = \int_{E} \lm{\alpha}{\mu}(s)\lm{\beta}{\nu}(t) \,(\mu \times \nu)(\di(s, t))
    $$
    for all $E \in \H$ is then a finite signed measure on $(S \times T, \H)$ that satisfies
    \begin{equation}\label{eqn:prodofsignedmeasuresisprod}
    \omega(A \times B) = \alpha(A)\beta(B)
    \end{equation}
    for all $A \in \F$, $B \in \G$.
\end{proposition}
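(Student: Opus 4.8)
The plan is to verify, in the following order, that $\omega$ is well-defined and finite, that it is countably additive (hence a finite signed measure), and finally that it satisfies the product formula \eqref{eqn:prodofsignedmeasuresisprod}. Throughout, I would write $g(s,t) := \lm{\alpha}{\mu}(s)\,\lm{\beta}{\nu}(t)$ for the integrand. First I would record that $g$ is $\H$-measurable: the maps $(s,t) \mapsto \lm{\alpha}{\mu}(s)$ and $(s,t) \mapsto \lm{\beta}{\nu}(t)$ are measurable, being compositions of the (measurable) coordinate projections with the Radon-Nikodym derivatives supplied by \autoref{lem:radonnikodymsigned}, and a product of real-valued measurable functions is measurable.

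The first substantive step is integrability. Since $|g|$ is nonnegative, \autoref{lem:fubini} applies and the integrand factorises, yielding
$$
\int_{S\times T} |g| \,\di(\mu\times\nu) = \Big(\int_S \big|\lm{\alpha}{\mu}\big| \,\di\mu\Big)\Big(\int_T \big|\lm{\beta}{\nu}\big| \,\di\nu\Big) = |\alpha|(S)\,|\beta|(T),
$$
where the last equality invokes \autoref{lem:tvisintegralofab}. As $\alpha$, $\beta$ are finite, $|\alpha|(S)$ and $|\beta|(T)$ are finite, so $g \in L^1(\mu\times\nu)$ and $\omega(E) = \int_E g\,\di(\mu\times\nu)$ is a well-defined finite real number for every $E \in \H$, with $|\omega(E)| \leq |\alpha|(S)\,|\beta|(T)$.

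Next I would show $\omega$ is a finite signed measure. Finiteness is the bound just obtained. For countable additivity, given pairwise disjoint $\{E_i\}_{i\in\nn}$ with union $E$, the functions $g\,\indi{\bigcup_{i\leq m}E_i}$ converge pointwise to $g\,\indi{E}$ and are dominated in absolute value by the integrable function $|g|$, so \autoref{lem:domconverge} gives $\sum_i \omega(E_i) = \omega(E)$. (Equivalently, splitting $g = g^+ - g^-$ exhibits $\omega$ as a difference of two finite positive measures.)

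The final step is the product formula. Fix $A \in \F$, $B \in \G$. Since $\indi{A\times B}\,g$ is dominated by $|g|\in L^1(\mu\times\nu)$, \autoref{lem:fubini} again applies and, using $\indi{A\times B}(s,t) = \indi{A}(s)\indi{B}(t)$, gives
$$
\omega(A\times B) = \int_S \indi{A}(s)\,\lm{\alpha}{\mu}(s)\Big(\int_T \indi{B}(t)\,\lm{\beta}{\nu}(t)\,\di\nu\Big)\,\di\mu.
$$
The inner integral equals $\int_B \lm{\beta}{\nu}\,\di\nu = \beta(B)$ by \autoref{lem:radonnikodymsigned}; pulling the constant $\beta(B)$ out of the outer integral and invoking \autoref{lem:radonnikodymsigned} once more leaves $\beta(B)\int_A \lm{\alpha}{\mu}\,\di\mu = \beta(B)\alpha(A)$, as required. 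I expect the only real obstacle to be the bookkeeping around Fubini's applicability: one must confirm that $g$ is genuinely integrable—not merely that the iterated integrals happen to exist—before reversing the order of integration, which is precisely what the nonnegative-integrand computation of the integrability step secures.
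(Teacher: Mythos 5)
Your proposal is correct and takes essentially the same route as the paper's own proof: the integrability bound $\int_{S\times T}|g|\,\di(\mu\times\nu)=|\alpha|(S)\,|\beta|(T)$ obtained via Fubini's theorem (\autoref{lem:fubini}) and \autoref{lem:tvisintegralofab}, countable additivity via the dominated convergence theorem (\autoref{lem:domconverge}) with $|g|$ as the dominating function, and the product formula via a second application of Fubini. The only differences are cosmetic—you spell out the measurability of the integrand and the iterated-integral computation for $\omega(A\times B)$, both of which the paper leaves implicit.
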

\begin{proof}
    Referring back to \autoref{def:signedmeasure} let
    $\{E_i\}_{i \in \nn}$ be a collection of pairwise disjoint elements of $\H$
    and let $E = \bigcup_{i \in \nn} E_i$.
    Let $f_i: S \times T \to \rr$ be defined by
    $$
    f_i(s, t) = \sum_{k=1}^{i}\indi{E_k}(s, t)\lm{\alpha}{\mu}(s)\lm{\beta}{\nu}(t)
    $$
    for all $(s, t) \in S \times T$, $i \in \nn$, and let $f: S \times T \to \rr$ be defined by
    $$
    f(s, t) = \indi{E}(s, t)\lm{\alpha}{\mu}(s)\lm{\beta}{\nu}(t)
    $$
    for all $(s, t) \in S \times T$. Then
    $$
    \sum_{k=1}^{i}\omega(E_k) = \int_{S \times T} f_i \,\di(\mu \times \nu), \quad \omega(E) = \int_{S \times T} f \,\di(\mu \times \nu)
    $$
    and $f_i$ approaches $f$ everywhere on $S \times T$ as $i \to \infty$. Since
    \begin{align}
        \int_{S \times T} |f_i| \,\di(\mu \times \nu)
        & \leq \int_{S \times T} \bigg|\lm{\alpha}{\mu}(s)\bigg|\bigg|\lm{\beta}{\nu}(t)\bigg| \,(\mu \times \nu)(\di(s, t)) \nonumber \\
        & = \int_S \bigg|\lm{\alpha}{\mu}(s)\bigg| \int_T \bigg|\lm{\beta}{\nu}(t)\bigg| \,\nu(\di t) \mu(\di s) \nonumber \\
        & = |\alpha|(S)|\beta|(T) < \infty \label{eqn:prodsignedmeasurebounded}
    \end{align}
    where the first equality follows by Fubini's theorem (\autoref{lem:fubini})
    and where the second equality follows by \autoref{lem:tvisintegralofab},
    $$
    \omega(E) = \lim_{i \to \infty} \sum_{k=1}^{i}\omega(E_k)
    $$
    by the dominated convergence theorem (\autoref{lem:domconverge}).
    Thus $\omega$ is a signed measure.
    The fact that $|\omega(E)| < \infty$ for all $E \in \H$ follows by \eqref{eqn:prodsignedmeasurebounded} with $f_i$ replaced by $f$.
    Lastly, \eqref{eqn:prodofsignedmeasuresisprod}
    follows by a direct computation by definition of $\omega$ and by Fubini's theorem \autoref{lem:fubini},
    again using the finiteness of the second integral in \eqref{eqn:prodsignedmeasurebounded}.
\end{proof}

\noindent
By \eqref{eqn:prodofsignedmeasuresisprod} and \autoref{lem:signedmeasuresagree},
the measure $\omega$ defined in \autoref{prp:prodoffinitesignedmeasures} is
the unique finite signed measure on $(S \times T, \H)$
for which \eqref{eqn:prodofsignedmeasuresisprod} holds for all $A \in \F$, $B \in \G$,
so that the choices of $\mu$ and $\nu$ in the definition of $\omega$ do not matter.
Extending the notation already in place for $\sigma$-finite positive measures
we let
$$
\alpha \times \beta
$$
denote the measure defined by \autoref{prp:prodoffinitesignedmeasures}.

It should be noted that \autoref{prp:prodoffinitesignedmeasures}, when taken as
an observation on the structure of $\alpha \times \beta$ and not as the grounds for
definition thereof, affords the following rephrasing:

\begin{proposition}\label{prp:signedprodvecmeasureDoned}
    Let $\alpha$, $\beta$ be finite signed measures
    and let $\mu$, $\nu$ be $\sigma$-finite positive measures dominating $\alpha$,
    $\beta$ respectively.
    Then
    $\alpha \times \beta$ is finite,
    $\mu \times \nu$ dominates $\alpha \times \beta$, and
    $\dabmunu = \damu \dbnu$.
\end{proposition}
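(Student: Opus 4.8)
The plan is to extract all three assertions directly from the integral representation of $\alpha \times \beta$ that \autoref{prp:prodoffinitesignedmeasures} already furnishes, so that essentially no new work is needed beyond re-reading that proposition through the lens of \autoref{lem:radonnikodymsigned}. Recall that $\alpha \times \beta$ is defined to be the measure $\omega$ of \autoref{prp:prodoffinitesignedmeasures}, which satisfies
$$
(\alpha \times \beta)(E) = \int_E \lm{\alpha}{\mu}(s)\,\lm{\beta}{\nu}(t) \,(\mu \times \nu)(\di(s,t))
$$
for every $E \in \H$. The finiteness of $\alpha \times \beta$ is part of the conclusion of \autoref{prp:prodoffinitesignedmeasures} (via the bound $|\alpha|(S)\,|\beta|(T) < \infty$ on the total mass), so that clause requires nothing further.

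For the domination claim, I would argue that whenever $(\mu \times \nu)(E) = 0$ the integral displayed above is taken over a $(\mu \times \nu)$-null set and hence vanishes, giving $(\alpha \times \beta)(E) = 0$; thus $\mu \times \nu$ dominates $\alpha \times \beta$. For the Radon-Nikodym derivative, set $g(s,t) \coloneqq \lm{\alpha}{\mu}(s)\,\lm{\beta}{\nu}(t)$ and first check that $g$ is measurable with respect to $\H$: the coordinate projections $\pi_S : S \times T \to S$ and $\pi_T : S \times T \to T$ are measurable, so $g$ is the product of the two measurable functions $\lm{\alpha}{\mu} \circ \pi_S$ and $\lm{\beta}{\nu} \circ \pi_T$. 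The displayed equation then reads $(\alpha \times \beta)(E) = \int_E g \,\di(\mu \times \nu)$ for all $E \in \H$, which is exactly the defining relation for a Radon-Nikodym derivative. Since $\mu \times \nu$ is $\sigma$-finite by \autoref{lem:prodmeasure} and dominates $\alpha \times \beta$ by the previous step, the uniqueness clause of \autoref{lem:radonnikodymsigned} identifies $g$ as a version of $\lm{\alpha \times \beta}{\mu \times \nu}$, i.e. $\dabmunu = \damu\,\dbnu$ holds $(\mu \times \nu)$-almost everywhere.

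I do not anticipate a genuine obstacle, since the statement is a rephrasing of \autoref{prp:prodoffinitesignedmeasures} rather than a new computation. The one point that merits a sentence of care is the joint measurability of $g$ over the product $\sigma$-algebra $\H$: the projection argument above dispatches it, and it was in any case already implicitly required in order for the integral defining $\omega$ to make sense. Everything else is the bookkeeping of matching the explicit integrand against the abstract existence-and-uniqueness statement of \autoref{lem:radonnikodymsigned}.
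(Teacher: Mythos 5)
Your proposal is correct and follows essentially the same route as the paper: all three claims are read off from the integral representation of $\alpha \times \beta$ in \autoref{prp:prodoffinitesignedmeasures}, with the derivative identity obtained by matching the integrand against the defining relation of the Radon--Nikodym derivative from \autoref{lem:radonnikodymsigned}. Your explicit check of the joint measurability of $\lm{\alpha}{\mu}(s)\,\lm{\beta}{\nu}(t)$ via coordinate projections is a small addition the paper leaves implicit, but it does not constitute a different argument.
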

\begin{proof}
    The finiteness of $\alpha \times \beta$ has already been established in
    \autoref{prp:prodoffinitesignedmeasures}, under different name.
    The fact that $\mu \times \nu$ dominates $\alpha \times \beta$ also follows
    from the first equation of \autoref{prp:prodoffinitesignedmeasures}.
    The last claim follows since $\vdabmunu = {\di(\alpha \times \beta) \over \di(\mu \times \nu)}$
    can be taken to be any function that satisfies the selfsame equation
    as the coefficient of $(\mu \times \nu)(\di(s, t))$
    for all $E \in \H$, per the definition following
    \autoref{lem:radonnikodymsigned}.
\end{proof}

\begin{proposition}\label{prp:associativity}
    The product operation on finite signed measures is associative.
\end{proposition}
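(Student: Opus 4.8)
The plan is to prove $(\alpha \times \beta) \times \gamma = \alpha \times (\beta \times \gamma)$ for finite signed measures $\alpha$, $\beta$, $\gamma$ on measurable spaces $(S, \F)$, $(T, \G)$, $(U, \K)$, leveraging the uniqueness of product measures already isolated in \autoref{lem:signedmeasuresagree}. By \autoref{prp:prodoffinitesignedmeasures} applied twice, both sides are finite signed measures on the thrice-iterated product space, and under the canonical bijection $((s,t),u) \leftrightarrow (s,(t,u))$ I will regard them as measures on the common ground set $S \times T \times U$. It then suffices to produce a $\pi$-system that generates the relevant $\sigma$-algebra, that admits a countable cover of $S \times T \times U$, and on which the two measures agree; \autoref{lem:signedmeasuresagree} upgrades such agreement to global agreement. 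The natural candidate is the family $\M_0$ of ``boxes'' $A \times B \times C$ with $A \in \F$, $B \in \G$, $C \in \K$, which is closed under intersection because $(A \times B \times C) \cap (A' \times B' \times C') = (A \cap A') \times (B \cap B') \times (C \cap C')$, and for which the single box $S \times T \times U$ already covers the ground set.

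The first task is to confirm that the two iterated product $\sigma$-algebras genuinely coincide, and both equal the $\sigma$-algebra $\M$ generated by $\M_0$. Writing $\H$ for the product $\sigma$-algebra on $S \times T$ of \autoref{def:prodspace}, the $\sigma$-algebra carried by $(\alpha \times \beta) \times \gamma$ is the product $\sigma$-algebra of $\H$ and $\K$, which I denote $\H \otimes \K$, i.e.\ the one generated by sets $E \times C$ with $E \in \H$, $C \in \K$. Every box lies in $\H \otimes \K$ (take $E = A \times B$), so $\M \subseteq \H \otimes \K$. For the reverse inclusion I fix $C \in \K$ and consider $\{E \subseteq S \times T : E \times C \in \M\}$; using $((S \times T) \setminus E) \times C = ((S \times T) \times C) \setminus (E \times C)$ and $(\bigcup_i E_i) \times C = \bigcup_i (E_i \times C)$, this family is a $\sigma$-algebra on $S \times T$ containing every rectangle $A \times B$, hence containing all of $\H$. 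Thus $E \times C \in \M$ for all $E \in \H$, $C \in \K$, giving $\H \otimes \K \subseteq \M$ and so $\H \otimes \K = \M$. The symmetric argument gives $\F \otimes \I = \M$, where $\I$ is the product $\sigma$-algebra on $T \times U$, so both iterated product $\sigma$-algebras are exactly $\M$ and the bijection above is an isomorphism of measurable spaces.

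It remains only to evaluate both measures on an arbitrary box. By the defining identity of \autoref{prp:prodoffinitesignedmeasures}, applied to the outer product and then the inner one, $(\alpha \times \beta) \times \gamma \big( (A \times B) \times C \big) = (\alpha \times \beta)(A \times B)\, \gamma(C) = \alpha(A)\beta(B)\gamma(C)$, and likewise $\alpha \times (\beta \times \gamma) \big( A \times (B \times C) \big) = \alpha(A)\, (\beta \times \gamma)(B \times C) = \alpha(A)\beta(B)\gamma(C)$; under the identification these name the same box and the same value. Hence the two finite signed measures agree on $\M_0$, and \autoref{lem:signedmeasuresagree} promotes this to agreement on $\M$, establishing associativity. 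I expect the only real friction to be the $\sigma$-algebra bookkeeping of the middle paragraph---verifying that the two iterated product structures refer to one and the same $\sigma$-algebra $\M$---since the agreement on boxes is immediate from the definition of the binary product and the remainder is a direct appeal to the uniqueness packaged in \autoref{lem:signedmeasuresagree}.
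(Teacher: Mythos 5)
Your proof is correct, but it takes a genuinely different route from the paper's. The paper argues via Radon--Nikodym derivatives: fixing dominating measures $\mu$, $\nu$, $\eta$, it invokes \autoref{prp:signedprodvecmeasureDoned} repeatedly to show that $(\alpha\times\beta)\times\gamma$ and $\alpha\times(\beta\times\gamma)$ are both dominated by $\mu\times\nu\times\eta$ and have the same density $\lm{\alpha}{\mu}\lm{\beta}{\nu}\lm{\gamma}{\eta}$, then concludes by the uniqueness in \autoref{lem:radonnikodymsigned}. That argument is shorter, but it silently presupposes two facts: associativity of the product of positive $\sigma$-finite measures, $(\mu\times\nu)\times\eta = \mu\times(\nu\times\eta)$, and the identification of the two iterated product $\sigma$-algebras. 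You instead bypass densities entirely, evaluate both measures on the $\pi$-system of boxes $A\times B\times C$ (where both give $\alpha(A)\beta(B)\gamma(C)$ by \eqref{eqn:prodofsignedmeasuresisprod}), and appeal to \autoref{lem:signedmeasuresagree}; crucially, you carry out the $\sigma$-algebra bookkeeping explicitly with a good-sets argument showing $\H\otimes\K = \sigma(\M_0) = \F\otimes\I$. What your approach buys is self-containedness: the positive-measure associativity that the paper takes for granted would itself be proved by exactly your style of argument, and the $\sigma$-algebra identification is justified rather than assumed. What the paper's approach buys is brevity and reuse of machinery already built for \autoref{prp:signedprodvecmeasureDmultid} and the main theorem, where the density formula is needed anyway. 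Both proofs are sound; yours fills in steps the paper leaves implicit.
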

\begin{proof}
Let $\alpha$, $\beta$, $\gamma$ be finite signed measures with
respective dominating measures $\mu$, $\nu$, $\eta$. Then
$(\mu \times \nu) \times \eta = \mu \times (\nu \times \eta)$ dominates
both $(\alpha \times \beta) \times \gamma$ and $\alpha \times (\beta \times \gamma)$
by \autoref{prp:signedprodvecmeasureDoned}, and
$$
\vdm{(\alpha\times\beta)\times\gamma}{\mu\times\nu\times\eta}
= \vdm{\alpha\times\beta}{\mu\times\nu}\vdm{\gamma}{\eta}
= \vdm{\alpha}{\mu}\vdm{\beta}{\nu}\vdm{\gamma}{\eta}
= \vdm{\alpha}{\mu}\vdm{\beta\times\gamma}{\nu\times\eta}
= \vdm{\alpha\times(\beta\times\gamma)}{\mu\times\nu\times\eta}
$$
by repeated applications of \autoref{prp:signedprodvecmeasureDoned}. The
statement then follows by \autoref{lem:radonnikodymsigned}.
\end{proof}

\noindent
For $n$-dimensional finite signed measures $\valpha = (\alpha_1, \alpha_2, \ldots, \alpha_n)$,
$\vbeta = (\beta_1, \beta_2,$ $\ldots, \beta_n)$ on $(S, \F)$, $(T, \G)$, respectively,
we let
$$
\valpha \times \vbeta \coloneqq (\alpha_1 \times \beta_1, \alpha_2 \times \beta_2, \ldots, \alpha_n \times \beta_n)
$$
be the \emph{coordinate-wise product} of $\valpha$ and $\vbeta$,
which is an $n$-dimensional finite signed measure on $(S, \F) \times (T, \G)$,
as underscored by the first claim of \autoref{prp:signedprodvecmeasureDoned}.
No confusion will arise from this generalization of the symbol ``$\times$''
since $\valpha \times \vbeta$ reduces to $\alpha \times \beta$ for $1$-dimensional
$\valpha = \alpha$, $\vbeta = \beta$.

We also let $\cwp$ denote the coordinate-wise product function on $\rr^n \times \rr^n$, i.e.,
$$
\cwp(\vx, \vy) = (x_1y_1, \ldots, x_ny_n)
$$
for $\vx = (x_1, \ldots, x_n)$, $\vy = (y_1, \ldots, y_n) \in \rr^n$.
We also extend this notation to the case where the coordinates are
function of range $\rr$, in the natural way. This notation is useful for extending
the previous result to vector measures:

\begin{proposition} \label{prp:signedprodvecmeasureDmultid}
    Let $\valpha$, $\vbeta$ be $n$-dimensional finite signed measures
    and let $\mu$, $\nu$ be $\sigma$-finite positive measures dominating $\valpha$,
    $\vbeta$, respectively.
    Then
    $\mu \times \nu$ dominates $\valpha \times \vbeta$ and
    $\vdabmunu = \cwp(\vdamu, \vdbnu)$.
\end{proposition}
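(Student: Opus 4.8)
The plan is to lift the scalar statement \autoref{prp:signedprodvecmeasureDoned} to vectors coordinate by coordinate; no new analytic input is needed beyond what that proposition already supplies. First I would unwind the hypotheses using the conventions fixed after \autoref{def:ndsignedmeasure}: the assertion that $\mu$ dominates $\valpha$ means precisely that $\mu$ dominates each component $\alpha_i$, and likewise $\nu$ dominates each $\beta_i$. Hence for every $i \in \{1, \dots, n\}$ the four objects $\alpha_i$, $\beta_i$, $\mu$, $\nu$ satisfy the hypotheses of \autoref{prp:signedprodvecmeasureDoned}.

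Applying \autoref{prp:signedprodvecmeasureDoned} to each such index $i$ then yields two facts at once: that $\mu \times \nu$ dominates the scalar product measure $\alpha_i \times \beta_i$, and that $\vdm{\alpha_i \times \beta_i}{\mu \times \nu} = \vdm{\alpha_i}{\mu}\,\vdm{\beta_i}{\nu}$ (as always, these Radon-Nikodym derivatives are determined only up to $(\mu \times \nu)$-null sets). Since $\valpha \times \vbeta = (\alpha_1 \times \beta_1, \dots, \alpha_n \times \beta_n)$ by definition of the coordinate-wise product, and since domination of a vector measure is by definition domination of each of its components, the first fact---holding for every $i$---immediately gives that $\mu \times \nu$ dominates $\valpha \times \vbeta$.

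It remains only to reassemble the coordinates into the claimed vector identity. Because $\mu$ and $\nu$ are $\sigma$-finite, so is $\mu \times \nu$ (\autoref{lem:prodmeasure}), which guarantees that the vector derivative $\vdabmunu$ exists as a genuine $\rr^n$-valued function; by the vector Radon-Nikodym convention introduced before \autoref{def:nonatomic}, it may be taken to be the vector whose $i$-th entry is $\vdm{\alpha_i \times \beta_i}{\mu \times \nu}$. By the per-coordinate identity just obtained this entry equals $\vdm{\alpha_i}{\mu}\,\vdm{\beta_i}{\nu}$, which is exactly the $i$-th entry of the function $\cwp(\vdamu, \vdbnu)$, namely $(s, t) \mapsto \vdm{\alpha_i}{\mu}(s)\,\vdm{\beta_i}{\nu}(t)$, per the extension of the coordinate-wise product notation to $\rr$-valued functions. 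As this holds in every coordinate $(\mu \times \nu)$-almost everywhere, the two $\rr^n$-valued functions agree, establishing $\vdabmunu = \cwp(\vdamu, \vdbnu)$.

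There is no real obstacle here: the content of the proposition is already contained in its scalar predecessor, and what remains is purely the bookkeeping of passing between a vector measure and its components---reading vector domination componentwise and reading off the coordinate-wise product entrywise---both of which are resolved by definitions already in place. The only point deserving explicit mention is the standard caveat that every derivative equality is understood modulo $(\mu \times \nu)$-null sets, matching the ambient use of the Radon-Nikodym notation throughout the paper.
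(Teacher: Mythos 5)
Your proof is correct and follows essentially the same route as the paper's own argument: reading the domination of a vector measure coordinate-wise and applying \autoref{prp:signedprodvecmeasureDoned} to each component. The paper states this in two sentences; your additional bookkeeping about null sets and the assembly of the vector derivative is sound but not a different method.
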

\begin{proof}
    By definition $\mu$ dominates $\valpha$ if and only if $\mu$ dominates each
    coordinate of $\valpha$ and likewise for $\nu$ and $\vbeta$ and for
    $\mu \times \nu$ and $\valpha \times \vbeta$.
    The proposition thus directly follows from the coordinate-wise
    application of \autoref{prp:signedprodvecmeasureDoned}.
\end{proof}

\noindent
The following proposition is the technical heart of the paper, and the
culmination of the “trivial” machinery established thus far:

\begin{proposition}\label{prp:containpresprodnd}
    \emph{(\autoref{thm:inclusion2intro})}
    Let $\valpha$, $\valpha'$, $\vbeta$, $\vbeta'$ be $n$-dimensional finite signed measures
    such that
    $\ld(\valpha) \subseteq \ld(\valpha')$, $\ld(\vbeta) \subseteq \ld(\vbeta')$.
    Then $\ld(\valpha \times \vbeta) \subseteq \ld(\valpha' \times \vbeta')$.
\end{proposition}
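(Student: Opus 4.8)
The plan is to reduce everything, via \autoref{prp:suppconju}, to a comparison of reach functions: since $\ld(\valpha\times\vbeta)$ and $\ld(\valpha'\times\vbeta')$ are closed and convex, it suffices to prove $\dual{\ld(\valpha\times\vbeta)}{\vxs}\le\dual{\ld(\valpha'\times\vbeta')}{\vxs}$ for every $\vxs\in\rr^n$. I would first peel the problem into two one-sided steps using commutativity of the coordinate-wise product: the swap $(s,t)\mapsto(t,s)$ is a measurable isomorphism that identifies $\valpha\times\vbeta$ with $\vbeta\times\valpha$ up to relabeling of the ground space, leaving the range—and hence the hull—invariant, so $\ld(\valpha\times\vbeta)=\ld(\vbeta\times\valpha)$ and likewise with primes. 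Thus it is enough to establish the special case $\vbeta=\vbeta'$, namely that $\ld(\valpha)\subseteq\ld(\valpha')$ implies $\ld(\valpha\times\vbeta)\subseteq\ld(\valpha'\times\vbeta)$; applying this once with $\valpha\to\valpha'$ and once (after swapping) with $\vbeta\to\vbeta'$ chains to $\ld(\valpha\times\vbeta)\subseteq\ld(\valpha'\times\vbeta)\subseteq\ld(\valpha'\times\vbeta')$.

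The engine of the special case is an integral formula for the reach. Writing $\mu$, $\nu$ for $\sigma$-finite positive measures dominating $\valpha$, $\vbeta$, \autoref{prp:reachofld} gives $\dual{\ld(\valpha)}{\vxs}=\dpr{\vxs}{\valpha(\Diamu{>})}=\int_{\Diamu{>}}\dpr{\vxs}{\vdamu}\,\di\mu$; since by definition of $\Diamu{>}$ the integrand is positive there and non-positive on the complement, this equals $\int_S(\dpr{\vxs}{\vdamu})^+\,\di\mu$, where $(\cdot)^+=\max(0,\cdot)$. The point of this rewriting is that it holds for \emph{every} direction, so the reach of $\ld(\valpha)$ can be probed in a direction later allowed to depend on a second integration variable.

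For the special case I would invoke \autoref{prp:signedprodvecmeasureDmultid}, which gives $\vdabmunu=\cwp(\vdamu,\vdbnu)$, whence $\dpr{\vxs}{\vdabmunu(s,t)}=\dpr{\cwp(\vxs,\vdbnu(t))}{\vdamu(s)}$. The reach formula then reads
$$
\dual{\ld(\valpha\times\vbeta)}{\vxs}=\int_{S\times T}\big(\dpr{\cwp(\vxs,\vdbnu(t))}{\vdamu(s)}\big)^+\,\di(\mu\times\nu).
$$
Integrating in $s$ first (Fubini, \autoref{lem:fubini}; its hypothesis holds since $\int_{S\times T}|(\vdamu(s))_i\,(\vdbnu(t))_i|\,\di(\mu\times\nu)=|\alpha_i|(S)\,|\beta_i|(T)<\infty$ by \autoref{lem:tvisintegralofab}), the inner integral is exactly $\dual{\ld(\valpha)}{\cwp(\vxs,\vdbnu(t))}$ by the formula above. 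The hypothesis $\ld(\valpha)\subseteq\ld(\valpha')$ gives $\dual{\ld(\valpha)}{\vy}\le\dual{\ld(\valpha')}{\vy}$ for all $\vy$ (\autoref{prp:suppconju}), applied pointwise at $\vy=\cwp(\vxs,\vdbnu(t))$; re-expanding $\dual{\ld(\valpha')}{\cwp(\vxs,\vdbnu(t))}$ as an $s'$-integral against a dominating measure $\mu'$ of $\valpha'$ and unwinding Fubini in reverse yields $\dual{\ld(\valpha'\times\vbeta)}{\vxs}$, as desired.

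The main obstacle is bookkeeping rather than conceptual: one must confirm the reach-as-positive-part identity, verify that $t\mapsto\dual{\ld(\valpha)}{\cwp(\vxs,\vdbnu(t))}$ is measurable (which follows from Tonelli, or from continuity of the reach function composed with the measurable $\vdbnu$) so that the pointwise inequality may be integrated in $t$, and keep track of the fact that $\valpha$ and $\valpha'$ may live on different ground spaces—forcing separate dominating measures $\mu$, $\mu'$ rather than a single common one. The genuinely substantive step, by contrast, is recognizing the inner integral as a reach value of $\ld(\valpha)$, which is precisely what allows the containment hypothesis to be spent one direction at a time.
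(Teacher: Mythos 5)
Your proposal is correct and follows essentially the same route as the paper's proof: reduce to the case $\vbeta = \vbeta'$ by symmetry, express the reach of the product hull via \autoref{prp:reachofld} and \autoref{prp:signedprodvecmeasureDmultid}, use Fubini to isolate an inner integral equal to $\dual{\ld(\valpha)}{\cwp(\vxs,\vdbnu(t))}$, spend the hypothesis pointwise in $t$ via \autoref{prp:suppconju}, and rewind. Your positive-part rewriting $\int_S(\dpr{\vxs}{\vdamu})^+\,\di\mu$ is just a notational variant of the paper's integration over $\Dins{\vdamu}{>}$, and your explicit swap argument is the "symmetric argument" the paper invokes without detail.
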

\begin{proof}
    We can restrict our attention to the case $\vbeta = \vbeta'$
    as the general case will then follow by a symmetric argument.
    Let $\mu$, $\mu'$ and $\nu$ be $\sigma$-finite positive measures dominating $\valpha$,
    $\valpha'$ and $\vbeta$, respectively, on their respective spaces.
    (E.g., $\mu = |\valpha|$, etc.)
    Then $\mu \times \nu$ dominates $\valpha \times \vbeta$ and
    \phantom{aaaaaaaaaaaaaaaaaaaaaaaaaaaaaaaaaaaaaaaaaaaaaaaaaaaaa}
    \begin{equation}
        \vdabmunu = \cwp(\vdamu, \vdbnu) \label{eqn:yaplo}
    \end{equation}
    by \autoref{prp:signedprodvecmeasureDmultid}.

    Noting that
    \begin{equation}
        \dpr{\vx}{(\cwp(\vy, \vz))} = \sum_{i=1}^{n} x_i y_i z_i = \dpr{(\cwp(\vx, \vz))}{\vy} \label{eqn:yapla}
    \end{equation}
    for all $\vx = (x_1, x_2, \ldots, x_n)$, $\vy = (y_1, y_2, \ldots, y_n)$, $\vz = (z_1, z_2, \ldots, z_n) \in \rr^n$,
    we define
    \phantom{aaaaaaaaaaaaaaaaaaaaaaaaaaaaaaaaaaaaaaaaaaaaaaaaaaaaa}
    $$
    \vxs_t := \cwp(\vxs, \vdbnu(t))
    $$
    for all $t \in T$, $\vxs \in \rr^n$, so that
    \begin{equation}\label{eqn:sothat}
    \dpr{\vxs}{\vdabmunu(s, t)} = \dpr{\vxs_t}{\vdamu(s)}
    \end{equation}
    for all $(s, t) \in S \times T$, $\vxs \in \rr^n$, by \eqref{eqn:yaplo}, \eqref{eqn:yapla}. Then
    {\allowdisplaybreaks
    \begin{align*}
        \dual{\ld(\valpha \times \vbeta)}{\vxs}
        & = \vphantom{\int_{\Dins{\vdabmunu}{>}}} \dpr{\vxs}{(\valpha \times \vbeta)(\Dins{\vdabmunu}{>})} \\
        & = \vphantom{\int_{\Dins{\vdabmunu}{>}}} \int_{\Dins{\vdabmunu}{>}} \dpr{\vxs}{\vdabmunu(s, t)} \,(\mu \times \nu)(\di(s, t)) \\
        & = \vphantom{\int_{\Dins{\vdabmunu}{>}}} \int_T \int_{\domc{\dpr{\vxs_t}{\vdamu}}{>}{0}} \dpr{\vxs_t}{\vdamu(s)} \,\mu(\di s) \,\nu(\di t) \\
        & = \vphantom{\int_{\Dins{\vdabmunu}{>}}} \int_T \dpr{\vxs_t}{\valpha(\domc{\dpr{\vxs_t}{\vdamu}}{>}{0})} \,\nu(\di t) \\
        & = \vphantom{\int_{\Dins{\vdabmunu}{>}}} \int_T \dual{\ld(\valpha)}{\vxs_t} \,\nu(\di t) \\
        & \leq \vphantom{\int_{\Dins{\vdabmunu}{>}}} \int_T \dual{\ld(\valpha')}{\vxs_t} \,\nu(\di t) \\
        & = \vphantom{\int_{\Dins{\vdabmunu}{>}}} \dual{\ld(\valpha' \times \vbeta)}{\vxs}
    \end{align*}
    for all $\vxs \in \rr^n$, where}
    the first and fifth equalities follow by \autoref{prp:reachofld},
    where the second and fourth equalities follow
    by linearity of the integral,
    where the third equality follows by \eqref{eqn:sothat} and by Fubini's theorem (\autoref{lem:fubini}),
    and where the inequality follows by \autoref{prp:suppconju}.
    Thus, by \autoref{prp:suppconju} again, $\ld(\valpha \times \vbeta) \subseteq \ld(\valpha' \times \vbeta)$.
\end{proof}

\noindent
As a consequence of \autoref{prp:containpresprodnd}, $\ld(\valpha \times \vbeta)$ is uniquely determined by $\ld(\valpha)$ and $\ld(\vbeta)$,
in the sense of the following corollary:

\begin{corollary}\label{col:proddeterndsigned}
    \emph{(\autoref{thm:mainintro})}
    Let $\valpha$, $\valpha'$, $\vbeta$, $\vbeta'$ be $n$-dimensional finite signed measures.
    If $\ld(\valpha) = \ld(\valpha')$, $\ld(\vbeta) = \ld(\vbeta')$,
    then $\ld(\valpha \times \vbeta) = \ld(\valpha' \times \vbeta')$.
\end{corollary}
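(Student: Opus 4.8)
The plan is to deduce the statement immediately from the inclusion-preservation result of \autoref{prp:containpresprodnd}, invoking only the elementary principle that two subsets of $\rr^n$ coincide if and only if each is contained in the other. All the analytic work has already been done, so the corollary is purely formal.

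First I would unpack each equality hypothesis into a pair of inclusions. From $\ld(\valpha) = \ld(\valpha')$ one reads off both $\ld(\valpha) \subseteq \ld(\valpha')$ and $\ld(\valpha') \subseteq \ld(\valpha)$; likewise $\ld(\vbeta) = \ld(\vbeta')$ yields both $\ld(\vbeta) \subseteq \ld(\vbeta')$ and $\ld(\vbeta') \subseteq \ld(\vbeta)$. Next I would apply \autoref{prp:containpresprodnd} twice. Feeding it the inclusions $\ld(\valpha) \subseteq \ld(\valpha')$ and $\ld(\vbeta) \subseteq \ld(\vbeta')$ produces $\ld(\valpha \times \vbeta) \subseteq \ld(\valpha' \times \vbeta')$. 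Running the same proposition with the primed and unprimed measures interchanged—i.e., using $\ld(\valpha') \subseteq \ld(\valpha)$ and $\ld(\vbeta') \subseteq \ld(\vbeta)$ as the hypotheses—produces the reverse inclusion $\ld(\valpha' \times \vbeta') \subseteq \ld(\valpha \times \vbeta)$. Combining the two inclusions by antisymmetry of set containment gives $\ld(\valpha \times \vbeta) = \ld(\valpha' \times \vbeta')$, as desired.

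There is no genuine obstacle here. The substantive content—the separating-hyperplane reduction to reach functions via \autoref{prp:suppconju}, the identification of the supremum-achieving half-space through \autoref{prp:reachofld}, and the Fubini interchange that lets one replace $\valpha$ by $\valpha'$ inside the inner integral—has already been discharged in the proof of \autoref{prp:containpresprodnd}. The only thing warranting care is bookkeeping the order of the four measures correctly across the two invocations, so that the hypothesis pattern of the proposition (the unprimed hull contained in the primed one, coordinate by coordinate) is matched exactly in each direction.
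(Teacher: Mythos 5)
Your proposal is correct and matches the paper's own reasoning exactly: the paper derives this corollary from \autoref{prp:containpresprodnd} by precisely the double-inclusion argument you describe (as it notes in the introduction, ``sets $S$ and $T$ are equal if and only if $S \subseteq T$ and $T \subseteq S$''). Nothing further is needed.
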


\noindent
Let $H_1 = \ld(\valpha)$, $H_2 = \ld(\vbeta)$ for some $n$-dimensional finite signed measures $\valpha$, $\vbeta$.
We define the \emph{Lorenz product} of $H_1$ and $H_2$, denoted by $\mprod{H_1}{H_2}$, to be the Lorenz hull $H = \ld(\valpha \times \vbeta)$.
This product is well-defined by \autoref{col:proddeterndsigned}.
We note that \autoref{thm:inclusionintro} of the introduction is a direct corollary of
\autoref{prp:containpresprodnd}.
The Lorenz product is commutative since $\lsk(\valpha \times \vbeta) = \lsk(\vbeta \times \valpha)$
for all finite signed vector measures $\valpha$, $\vbeta$, even while $\valpha \times \vbeta$ and
$\vbeta \times \valpha$ have underlying ground sets that are reversed Cartesian products.
The associativity of the Lorenz product follows by associativity of the product of
finite signed vector measures,
itself obtained by coordinate-wise application of \autoref{prp:associativity}.

\section{Sums and Distributivity}

In this section we note that the Minkowski sum of Lorenz hulls is a Lorenz hull
(which is not a new observation, since the set of all zonoids and all Lorenz hulls coincides),
and show that the Lorenz product is distributive
over such sums.
The distributivity follows by an analogous property of products of “disjoint sums” of measures.

We note that since the Lorenz product is commutative the
distributivity is naturally both-sided, though it might be better to view the left-
and right-distributivity as independent corollaries of the analogous identities for measures,
since the measure product may not be commutative in more general cases---e.g., quaternion-valued
measures, as suggested by the material in \autoref{sec:complex}.


Let $(S, \F)$, $(T, \G)$ be measurable spaces with $S$, $T$ disjoint. We let $\F \oplus \G$ denote the set \phantom{aaaaaaaaaaaaaaaaaaaaaaaa}
$$
\{A \cup B \,:\, A \in \F, B \in \G\},
$$
that one can easily check is a $\sigma$-algebra on $S \cup T$.
and call it \emph{the union-sum of $\F$ and $\G$}.
For $n$-dimensional finite signed measures $\valpha$ and $\vbeta$ on $(S, \F)$ and $(T, \G)$, respectively,
we also let $\valpha \oplus \vbeta$ denote the $n$-dimensional finite signed measure on $(S \cup T, \F \oplus \G)$ defined by
$$
(\valpha \oplus \vbeta)(A \cup B) = \valpha(A) + \vbeta(B)
$$
for all $A \in \F$, $B \in \G$.

\begin{proposition}\label{prp:ldsum}
    Let $\valpha$, $\vbeta$ be $n$-dimensional finite signed measures on $(S, \F)$, $(T, \G)$, respectively, with $S$, $T$ disjoint.
    Then $\ld(\valpha \oplus \vbeta) = \ld(\valpha) + \ld(\vbeta)$.
\end{proposition}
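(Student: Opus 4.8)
The plan is to reduce the statement to the corresponding identity at the level of Lorenz skeletons, namely $\lsk(\valpha \oplus \vbeta) = \lsk(\valpha) + \lsk(\vbeta)$, and then pass to convex hulls via \autoref{prp:hullsum}. Since everything here is about ranges and Minkowski sums, no convexity or separating-hyperplane input is needed.

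First I would unwind the definition of the union-sum. Every element of $\F \oplus \G$ is of the form $A \cup B$ with $A \in \F$, $B \in \G$, and because $S$, $T$ are disjoint this is automatically a disjoint union, with $A = (A \cup B) \cap S$ and $B = (A \cup B) \cap T$. By the defining equation $(\valpha \oplus \vbeta)(A \cup B) = \valpha(A) + \vbeta(B)$, the range of $\valpha \oplus \vbeta$ is therefore exactly
$$
\{\valpha(A) + \vbeta(B) \,:\, A \in \F,\ B \in \G\},
$$
which is by definition the Minkowski sum $\lsk(\valpha) + \lsk(\vbeta)$ of the two skeletons. Here the only point to verify is that $A$ and $B$ range independently over $\F$ and $\G$, so that no set of $\F \oplus \G$ is missed and no spurious sum is introduced; uniqueness of the decomposition $A \cup B$ is not required.

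Second, I would take convex hulls of both sides. Using $\ld(\cdot) = \convhull(\lsk(\cdot))$ together with the skeleton identity just established,
$$
\ld(\valpha \oplus \vbeta) = \convhull(\lsk(\valpha) + \lsk(\vbeta)).
$$
Applying \autoref{prp:hullsum} with $A = \lsk(\valpha)$ and $B = \lsk(\vbeta)$ then gives
$$
\convhull(\lsk(\valpha) + \lsk(\vbeta)) = \convhull(\lsk(\valpha)) + \convhull(\lsk(\vbeta)) = \ld(\valpha) + \ld(\vbeta),
$$
which is the claim. There is essentially no hard step: the entire content is the bookkeeping in the previous paragraph identifying the range of $\valpha \oplus \vbeta$ with the full Minkowski sum of the skeletons, after which \autoref{prp:hullsum} does the rest.
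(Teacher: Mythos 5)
Your proposal is correct and follows exactly the paper's own route: establish the skeleton identity $\lsk(\valpha \oplus \vbeta) = \lsk(\valpha) + \lsk(\vbeta)$ (which the paper dismisses as ``easy to check'' and you spell out correctly via the disjointness of $S$ and $T$), then pass to convex hulls using the first part of \autoref{prp:hullsum}. There is nothing to add.
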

\begin{proof}
    It is easy to check that $\lsk(\valpha \oplus \vbeta) = \lsk(\valpha) + \lsk(\vbeta)$.
    The statement thus follows by the first part of \autoref{prp:hullsum}.
\end{proof}

\noindent
The product of finite signed vector measures is distributive over the direct sum operation ``$\oplus$'':

\begin{proposition}\label{prp:measuredistrb}
    Let $\valpha$, $\vbeta$ be $n$-dimensional finite signed measures on $(S, \F)$, $(T, \G)$, respectively, with $S$, $T$ disjoint.
    Let $\vtau$ be an $n$-dimensional finite signed measure on $(\Omega, \H)$.
    Then $(\valpha \oplus \vbeta) \times \vtau = (\valpha \times \vtau) \oplus (\vbeta \times \vtau)$.
\end{proposition}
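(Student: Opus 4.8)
The plan is to verify the identity via the uniqueness principle for finite signed measures, \autoref{lem:signedmeasuresagree}, after first checking that both sides are finite signed measures on one and the same measurable space. Since both the coordinate-wise product $\times$ and the union-sum $\oplus$ act on each component separately, it suffices to treat the scalar case: writing $\valpha = (\alpha_1,\dots,\alpha_n)$, $\vbeta = (\beta_1,\dots,\beta_n)$, $\vtau = (\tau_1,\dots,\tau_n)$, the claimed identity holds iff $(\alpha_i\oplus\beta_i)\times\tau_i = (\alpha_i\times\tau_i)\oplus(\beta_i\times\tau_i)$ for each $i$, so I would fix finite signed measures $\alpha$ on $(S,\F)$, $\beta$ on $(T,\G)$, $\tau$ on $(\Omega,\H)$ and prove the one-dimensional statement. (Alternatively one can keep the vectors and use bilinearity of $\cwp$ in the computation below.)

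First I would pin down the common ground space. The left-hand measure lives on the product of $(S\cup T, \F\oplus\G)$ with $(\Omega,\H)$, while the right-hand measure, being a union-sum of a measure on $S\times\Omega$ and a measure on $T\times\Omega$ (disjoint because $S\cap T=\emptyset$), lives on $\big((S\cup T)\times\Omega,\ (\F\times\H)\oplus(\G\times\H)\big)$. The ground sets already coincide since $(S\cup T)\times\Omega = (S\times\Omega)\cup(T\times\Omega)$, and I would check that the two $\sigma$-algebras coincide as well: the generating rectangles $A\times C$ and $B\times C$ of $\F\times\H$ and $\G\times\H$ are of the form $(A\cup\emptyset)\times C$ and $(\emptyset\cup B)\times C$ and hence lie in $(\F\oplus\G)\times\H$, while conversely each generator $(A\cup B)\times C = (A\times C)\cup(B\times C)$ of $(\F\oplus\G)\times\H$ lies in $(\F\times\H)\oplus(\G\times\H)$; as both are $\sigma$-algebras, each contains the other's generators, so the two are equal.

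Next I would introduce the collection $\R = \{(A\cup B)\times C : A\in\F,\ B\in\G,\ C\in\H\}$, which is exactly the set of generating rectangles of the product $\sigma$-algebra (\autoref{def:prodspace}). It is closed under intersection because, using $S\cap T=\emptyset$,
\[
\big[(A_1\cup B_1)\times C_1\big]\cap\big[(A_2\cup B_2)\times C_2\big]
= \big[(A_1\cap A_2)\cup(B_1\cap B_2)\big]\times(C_1\cap C_2),
\]
and it covers the whole space via $A=S$, $B=T$, $C=\Omega$, so the hypotheses of \autoref{lem:signedmeasuresagree} are met. On a generic element $(A\cup B)\times C$ of $\R$ the left-hand side evaluates, by \eqref{eqn:prodofsignedmeasuresisprod} applied to $\alpha\oplus\beta$ and $\tau$ and then by the definition of $\oplus$, to $(\alpha\oplus\beta)(A\cup B)\,\tau(C) = \big(\alpha(A)+\beta(B)\big)\tau(C)$; the right-hand side evaluates, by the definition of $\oplus$ on the disjoint pieces $A\times C\subseteq S\times\Omega$ and $B\times C\subseteq T\times\Omega$ followed by \eqref{eqn:prodofsignedmeasuresisprod}, to $(\alpha\times\tau)(A\times C) + (\beta\times\tau)(B\times C) = \alpha(A)\tau(C)+\beta(B)\tau(C)$. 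These agree.

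Both sides being finite signed measures (the left by \autoref{prp:prodoffinitesignedmeasures}, the right as a union-sum of two such products), agreement on the intersection-closed, space-covering generator $\R$ upgrades to agreement on the full $\sigma$-algebra by \autoref{lem:signedmeasuresagree}, which is the claim. I do not anticipate a genuine obstacle: the content is entirely in the bookkeeping of the $\sigma$-algebra identification and in verifying the $\pi$-system hypotheses of \autoref{lem:signedmeasuresagree}, while the measure computation itself collapses to the bilinear identity $\big(\alpha(A)+\beta(B)\big)\tau(C) = \alpha(A)\tau(C)+\beta(B)\tau(C)$.
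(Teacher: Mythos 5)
Your proposal is correct and follows essentially the same route as the paper's proof: identify the two domains, check agreement on the rectangles $(A\cup B)\times C$ with $A\in\F$, $B\in\G$, $C\in\H$, and conclude via \autoref{lem:signedmeasuresagree}. The paper leaves the $\sigma$-algebra identification and the $\pi$-system verification as ``easy to check,'' whereas you carry them out explicitly (including the use of $S\cap T=\emptyset$ in closure under intersection), which is exactly the intended bookkeeping.
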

\begin{proof}
    It is easy to check that the two measures in question have the same domain,
    i.e., that the union-sum of the $\sigma$-algebras of $(S, \F) \times (\Omega, \H)$ and $(T,\G) \times (\Omega, \H)$
    coincides with the $\sigma$-algebra of $(S \cup T, \F \oplus \G) \times (\Omega, \H)$.
    Moreover, it is also easy to check that the two measures agree on sets of the form $A \times B$
    where $A \in \F \oplus \G$, $B \in \H$.
    The statement thus follows by \autoref{lem:signedmeasuresagree}.
\end{proof}


\begin{proposition}\label{prp:lorenzdistrblaw}
    \emph{(\autoref{thm:distributionintro})}
    $\mprod{(H_1 + H_2)}{H_3} = \mprod{H_1}{H_3} + \mprod{H_2}{H_3}$
    for any Lorenz hulls $H_1$, $H_2$, $H_3 \subseteq \rr^n$.
\end{proposition}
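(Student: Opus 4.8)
The plan is to reduce the hull-level distributivity to the measure-level distributivity already recorded in \autoref{prp:measuredistrb}, using \autoref{prp:ldsum} to translate Minkowski sums of hulls into union-sums of measures and back. First I would fix $n$-dimensional finite signed measures $\valpha$, $\vbeta$, $\vtau$ with $H_1 = \ld(\valpha)$, $H_2 = \ld(\vbeta)$, $H_3 = \ld(\vtau)$, chosen so that the ground sets of $\valpha$ and $\vbeta$ are disjoint. This disjointness can always be arranged at no cost: tagging the ground set of $\valpha$ with $0$ and that of $\vbeta$ with $1$ (i.e., passing to $\{0\}\times S$ and $\{1\}\times T$, with the $\sigma$-algebras and measures transported along the obvious bijections) yields measures with the same ranges, hence the same Lorenz hulls, so the choice of representative is harmless.

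Given such representatives, \autoref{prp:ldsum} gives $H_1 + H_2 = \ld(\valpha) + \ld(\vbeta) = \ld(\valpha \oplus \vbeta)$. Since the Lorenz product is well-defined independently of the chosen representatives (\autoref{col:proddeterndsigned}), I may compute $\mprod{(H_1 + H_2)}{H_3}$ using the representative $\valpha \oplus \vbeta$ of $H_1 + H_2$ together with $\vtau$ of $H_3$, obtaining
$$
\mprod{(H_1 + H_2)}{H_3} = \ld\big((\valpha \oplus \vbeta) \times \vtau\big).
$$
Now \autoref{prp:measuredistrb} rewrites the argument on the right as $(\valpha \times \vtau) \oplus (\vbeta \times \vtau)$. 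The ground sets of $\valpha \times \vtau$ and $\vbeta \times \vtau$ are $S \times \Omega$ and $T \times \Omega$, which remain disjoint because $S$ and $T$ are, so \autoref{prp:ldsum} applies a second time to give $\ld(\valpha \times \vtau) + \ld(\vbeta \times \vtau)$. Finally, by the definition of the Lorenz product these two summands are exactly $\mprod{H_1}{H_3}$ and $\mprod{H_2}{H_3}$, which closes the chain.

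The argument involves no genuine obstacle beyond careful bookkeeping: the only points requiring attention are (i) securing the disjointness hypothesis of \autoref{prp:ldsum} at each of its two invocations, which the tagging trick handles, and (ii) appealing to well-definedness of the product (\autoref{col:proddeterndsigned}) to license the passage from abstract hulls to concrete measure representatives and back. The stated identity is $\mprod{(H_1 + H_2)}{H_3}$; the form $\mprod{H_1}{(H_2 + H_3)}$ appearing in \autoref{thm:distributionintro} then follows at once from commutativity of the Lorenz product.
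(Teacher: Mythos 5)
Your proposal is correct and follows essentially the same route as the paper's own proof: pick representatives with disjoint ground sets, use \autoref{prp:ldsum} to identify $H_1 + H_2$ with $\ld(\valpha \oplus \vbeta)$, apply \autoref{prp:measuredistrb}, and translate back via \autoref{prp:ldsum} and the definition of the Lorenz product. The paper's version is terser—it leaves the tagging trick and the second invocation of \autoref{prp:ldsum} implicit—but the structure and the key lemmas are identical.
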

\begin{proof}
    This is a direct consequence  of the last two propositions since
    for every $n$-dimensional Lorenz hulls $H_1$, $H_2$ there exist
    finite signed vector measures $\valpha$, $\vbeta$ such that $\ld(\valpha) = H_1$,
    $\ld(\vbeta) = H_2$ and such that $\valpha \oplus \vbeta$ is defined
    and since $\ld(\vgamma)\ld(\vtau) = \ld(\vgamma \times \vtau)$ by definition of
    the Lorenz product for all $n$-dimensional finite signed vector
    measures $\vgamma$, $\vtau$.
\end{proof}



\section{The Lorenz Product for Complex Measures}\label{sec:complex}

We generalize the above results to complex measures.
In comparison with \autoref{def:signedmeasure} and \autoref{def:ndsignedmeasure}, we have the following definitions:

\begin{definition}\label{def:complexmeasure}
    A \emph{complex measure} on a measurable space $(S, \F)$ is a function $\alpha: \F \to \cc$ such that \phantom{aaaaaaaaaaaaaaaaaaaaaaaaaa}
    \begin{equation}\label{eqn:signedmeasurecountablyaddative}
        \alpha\Big(\bigcup_{i = 1}^{\infty}A_i\Big) = \sum_{i=1}^{\infty}\alpha(A_i)
    \end{equation}
    for any collection $\{A_i\}_{i \in \N}$ of pairwise disjoint elements of $\F$.
\end{definition}

\begin{definition}\label{def:ndcomplexmeasure}
    An $n$-dimensional complex measure on a measurable space $(S, \F)$ is a function
    $\valpha : \F \to \cc^n$ so that
    $$
    \valpha(A) = (\alpha_1(A), \alpha_2(A), \ldots, \alpha_n(A))
    $$
    for all $A \in \F$, where each $\alpha_i(A)$, $1 \leq i \leq n$, is a complex measure on $(S, \F)$.
\end{definition}

\noindent
We extend the coordinate-wise product function $\cwp$ to be on $\cc^n \times \cc^n$, i.e.,
$$
\cwp(\vx, \vy) = (x_1y_1, \ldots, x_ny_n)
$$
for $\vx = (x_1, \ldots, x_n)$, $\vy = (y_1, \ldots, y_n) \in \cc^n$.

By replacing $\rr^n$ with $\cc^n$,
replacing finite signed measures with complex measures,
and replacing absolute values of real numbers with moduli of complex numbers,
\autoref{def:totalvariationofsignedmeasures},
the definition of being absolutely continuous with respect to a positive measure,
\autoref{def:ndsignedmeasureLD},
the definition of the operation ``$\oplus$'',
and the definition of integrals of vector valued functions \eqref{eqn:intofvecfunc}
can be generalized,
while
\autoref{lem:signedmeasuresagree},
\autoref{lem:fubini},
\autoref{lem:domconverge},
\autoref{lem:radonnikodymsigned},
\autoref{lem:tvisintegralofab} (the range of $h$ becoming the unit circle of the complex plane),
\autoref{prp:prodoffinitesignedmeasures},
\autoref{prp:signedprodvecmeasureDoned},
\autoref{prp:signedprodvecmeasureDmultid},
\autoref{prp:ldsum},
\autoref{prp:measuredistrb}
still hold.
In particular the range of (the total variation of) an $n$-dimensional complex measure is bounded,
and $\valpha \times \vbeta$ for $n$-dimensional complex measures $\valpha$, $\vbeta$ is well-defined.

For any $n$ without ambiguity, let $\psi: \cc^n \to \rr^{2n}$ be defined by
$$
\psi(\vz) = (\Re(z_1), \Im(z_1), \ldots, \Re(z_n), \Im(z_n))
$$
for all $\vz = (z_1, \ldots, z_n) \in \cc^n$,
where $\Re(z)$ and $\Im(z)$ are the real and imaginary part of $z$, respectively, for $z \in \cc$.
It is easy to check that $\psi$ is a bijection, and that both $\psi$ and $\psi^{-1}$ are linear and continuous (thus measurable).

For any $n$-dimensional complex measure $\valpha$ on $(S, \F)$,
we let $\mcr{\valpha}$ be the $2n$-dimensional finite signed measure on $(S, \F)$ defined by
$$
\mcr{\valpha}(A) = \psi(\valpha(A))
$$
for all $A \in \F$.
Obviously, a positive measure $\mu$ dominates $\valpha$ if and only if $\mu$ dominates $\mcr{\valpha}$.
Moreover, it is easy to check that
\begin{equation}\label{eqn:rndofmcr}
    \mcr{\valpha}(A) = \int_{A} \psi \circ \vdm{\valpha}{\mu} \,\di\mu
\end{equation}
for all $A \in \F$, for $\sigma$-finite $\mu$ that dominates $\valpha$.
By the fact that the convex hull of $A \subseteq \cc^n$ is the set of convex combinations of elements of $A$
and that $\psi$ is a linear bijection, one has
$$
\ld(\valpha) = \psi^{-1}(\ld(\mcr{\valpha})),
$$
which implies that $\ld(\valpha)$ is compact since $\psi^{-1}$ is continuous as a function.
It is also easy to check that $\ld(\valpha)$ is centrally symmetric and convex while containing $\vec{0} \in \cc^n$.
Moreover, the following proposition holds:
\begin{proposition}\label{prp:equivalenceofldandmcrld}
    Let $\valpha$, $\valpha'$ be $n$-dimensional complex measures.
    Then $\ld(\valpha) \subseteq \ld(\valpha')$
    if and only if $\ld(\mcr{\valpha}) \subseteq \ld(\mcr{\valpha'})$.
\end{proposition}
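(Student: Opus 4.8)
The plan is to leverage the identity $\ld(\valpha) = \psi^{-1}(\ld(\mcr{\valpha}))$, together with its analogue $\ld(\valpha') = \psi^{-1}(\ld(\mcr{\valpha'}))$, already established immediately before the statement, so that the desired equivalence reduces to a purely set-theoretic observation about images under a bijection. First I would record the elementary fact that if $f$ is an injective map and $C$, $D$ are subsets of its domain, then $f(C) \subseteq f(D)$ holds if and only if $C \subseteq D$: the ``if'' direction is immediate, and for the converse, any $c \in C$ satisfies $f(c) \in f(C) \subseteq f(D)$, so $f(c) = f(d)$ for some $d \in D$, whence $c = d \in D$ by injectivity.

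Next I would apply this fact with $f = \psi^{-1}$, which is a bijection from $\rr^{2n}$ to $\cc^n$ and hence injective, taking $C = \ld(\mcr{\valpha})$ and $D = \ld(\mcr{\valpha'})$. This yields
\[
\psi^{-1}(\ld(\mcr{\valpha})) \subseteq \psi^{-1}(\ld(\mcr{\valpha'})) \iff \ld(\mcr{\valpha}) \subseteq \ld(\mcr{\valpha'}).
\]
Substituting $\ld(\valpha) = \psi^{-1}(\ld(\mcr{\valpha}))$ into the left-hand side, and likewise $\ld(\valpha') = \psi^{-1}(\ld(\mcr{\valpha'}))$, converts the left-hand condition into $\ld(\valpha) \subseteq \ld(\valpha')$, which establishes the stated equivalence in both directions at once.

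There is no genuine obstacle here: the entire content has been front-loaded into the identity $\ld(\valpha) = \psi^{-1}(\ld(\mcr{\valpha}))$, whose verification in turn rests only on $\psi$ being a linear bijection, so that it commutes with the formation of convex hulls of ranges. The sole point warranting a moment's care is the \emph{reflection} of inclusions under $\psi^{-1}$, i.e. that we truly need injectivity rather than merely that $\psi^{-1}$ is a well-defined function; but since $\psi$ is a bijection this is automatic, and the proof is short.
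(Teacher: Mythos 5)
Your proposal is correct and follows exactly the route the paper intends: the paper states the proposition as an immediate consequence of the identity $\ld(\valpha) = \psi^{-1}(\ld(\mcr{\valpha}))$ established just before it, and your argument simply spells out the (omitted) step that inclusions are reflected under the injective map $\psi^{-1}$. Nothing is missing and nothing differs in substance.
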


\noindent
The isomorphic product function
$$
\ccwp: \rr^{2n} \times \rr^{2n} \to \rr^{2n}
$$
with respect to $\cwp: \cc^n \times \cc^n \to \cc^n$ is defined by
\begin{align*}
    \ccwp(\vx, \vy)
    & = \psi\big(\cwp(\psi^{-1}(\vx), \psi^{-1}(\vy))\big) \\
    & = (x_1y_1 - x_2y_2, x_1y_2 + x_2y_1, \ldots, \\
    & \qquad \qquad \qquad x_{2n-1}y_{2n-1} - x_{2n}y_{2n}, x_{2n-1}y_{2n} + x_{2n}y_{2n-1})
\end{align*}
for $\vx = (x_1, x_2, \ldots, x_{2n-1}, x_{2n})$, $\vy = (y_1, y_2, \ldots, y_{2n-1}, y_{2n}) \in \rr^{2n}$.

\begin{proposition}\label{prp:compleximgprodvecmeasureasint}
    Let $\valpha$, $\vbeta$ be $n$-dimensional complex measures on $(S, \F)$, $(T, \G)$, respectively.
    Let $\mu$, $\nu$ be $\sigma$-finite positive measures on $(S, \F)$, $(T, \G)$, respectively,
    such that $\mu$ dominates $\valpha$, $\nu$ dominates $\vbeta$.
    Let $(S \times T, \H) = (S, \F) \times (T, \G)$.
    Then
    $$
    \mcr{\valpha \times \vbeta}(E) = \int_E \ccwp (\vdm{\mcr{\valpha}}{\mu}(s), \vdm{\mcr{\vbeta}}{\nu}(t)) \,(\mu \times \nu)(\di(s, t))
    $$
    for all $E \in \H$.
\end{proposition}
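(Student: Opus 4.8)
The plan is to reduce the claim to the complex analogue of \eqref{eqn:rndofmcr} together with the defining property of $\ccwp$, so that the whole statement becomes a pointwise rewriting of an integrand. First I would note that $\mu \times \nu$ is $\sigma$-finite by \autoref{lem:prodmeasure} and dominates $\valpha \times \vbeta$ by the complex analogue of \autoref{prp:signedprodvecmeasureDmultid}, so that \eqref{eqn:rndofmcr}, applied to the $n$-dimensional complex measure $\valpha \times \vbeta$ on $(S \times T, \H)$ with dominating measure $\mu \times \nu$, yields
\begin{equation*}
\mcr{\valpha \times \vbeta}(E) = \int_E \psi\big(\vdm{\valpha \times \vbeta}{\mu \times \nu}(s, t)\big) \, (\mu \times \nu)(\di(s, t))
\end{equation*}
for all $E \in \H$. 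The complex analogue of \autoref{prp:signedprodvecmeasureDmultid} further identifies the Radon-Nikodym factor as $\vdm{\valpha \times \vbeta}{\mu \times \nu}(s, t) = \cwp(\vdm{\valpha}{\mu}(s), \vdm{\vbeta}{\nu}(t))$ for $(\mu \times \nu)$-almost every $(s, t)$.

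It then remains only to match the integrand with $\ccwp(\vdm{\mcr{\valpha}}{\mu}(s), \vdm{\mcr{\vbeta}}{\nu}(t))$. Here I would invoke the definition $\ccwp(\vx, \vy) = \psi(\cwp(\psi^{-1}(\vx), \psi^{-1}(\vy)))$, which upon substituting $\vx = \psi(\vw)$, $\vy = \psi(\vv)$ reads $\psi(\cwp(\vw, \vv)) = \ccwp(\psi(\vw), \psi(\vv))$ for all $\vw, \vv \in \cc^n$, since $\psi$ is a bijection. Taking $\vw = \vdm{\valpha}{\mu}(s)$ and $\vv = \vdm{\vbeta}{\nu}(t)$, and using that $\psi \circ \vdm{\valpha}{\mu} = \vdm{\mcr{\valpha}}{\mu}$ holds $\mu$-almost everywhere and $\psi \circ \vdm{\vbeta}{\nu} = \vdm{\mcr{\vbeta}}{\nu}$ holds $\nu$-almost everywhere (both by \eqref{eqn:rndofmcr} and the almost-everywhere uniqueness of Radon-Nikodym derivatives from \autoref{lem:radonnikodymsigned}), the integrand becomes
\begin{equation*}
\psi\big(\cwp(\vdm{\valpha}{\mu}(s), \vdm{\vbeta}{\nu}(t))\big) = \ccwp(\vdm{\mcr{\valpha}}{\mu}(s), \vdm{\mcr{\vbeta}}{\nu}(t))
\end{equation*}
for $(\mu \times \nu)$-almost every $(s, t)$, which finishes the computation once substituted into the displayed integral.

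The only point demanding care is that every identity between Radon-Nikodym derivatives above holds merely almost everywhere, so I would record that the $\mu$-a.e.\ coincidence of the $S$-factor and the $\nu$-a.e.\ coincidence of the $T$-factor still give a $(\mu \times \nu)$-a.e.\ coincidence of the product integrand: if $N \subseteq S$ is $\mu$-null and $M \subseteq T$ is $\nu$-null, then $(N \times T) \cup (S \times M)$ is $(\mu \times \nu)$-null, and altering an integrand on a null set leaves its integral unchanged. Beyond this bookkeeping there is no genuine obstacle: the proposition is essentially the assertion that the real-coordinate representation $\psi$ intertwines the complex coordinate-wise product $\cwp$ with its real image $\ccwp$, transported through the product-measure Radon-Nikodym calculus already established for the signed (and, by the remarks preceding this proposition, complex) case.
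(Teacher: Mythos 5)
Your proposal is correct and takes essentially the same route as the paper's own proof: both apply \eqref{eqn:rndofmcr} to $\valpha \times \vbeta$ with dominating measure $\mu \times \nu$, identify $\vdm{\valpha \times \vbeta}{\mu \times \nu}$ as $\cwp(\vdm{\valpha}{\mu}, \vdm{\vbeta}{\nu})$ via the complex version of \autoref{prp:signedprodvecmeasureDmultid}, use the intertwining identity $\psi\big(\cwp(\vw, \vv)\big) = \ccwp(\psi(\vw), \psi(\vv))$, and then invoke \eqref{eqn:rndofmcr} once more to recognize $\psi \circ \vdm{\valpha}{\mu}$ and $\psi \circ \vdm{\vbeta}{\nu}$ as valid choices of $\vdm{\mcr{\valpha}}{\mu}$ and $\vdm{\mcr{\vbeta}}{\nu}$. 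Your only addition is making the almost-everywhere bookkeeping explicit (null sets in the factors giving a null set in the product), which the paper leaves implicit but which is harmless and correct.
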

\begin{proof}
    One has \phantom{aaaaaaaa}
    \begin{align*}
        \mcr{\valpha \times \vbeta}(E)
        & = \int_E \psi \circ \vdm{\valpha \times \vbeta}{\mu \times \nu} \,\di(\mu \times \nu) \\
        & = \int_E \psi \big(\cwp(\vdm{\valpha}{\mu}(s), \vdm{\vbeta}{\nu}(t))\big) \,(\mu \times \nu)(\di(s, t)) \\
        & = \int_E \ccwp\big(\psi(\vdm{\valpha}{\mu}(s)), \psi(\vdm{\vbeta}{\nu}(t))\big) \,(\mu \times \nu)(\di(s, t)) \\
        & = \int_E \ccwp (\vdm{\mcr{\valpha}}{\mu}(s), \vdm{\mcr{\vbeta}}{\nu}(t)) \,(\mu \times \nu)(\di(s, t))
    \end{align*}
    for all $E \in \H$, where the first and last equalities follow by \eqref{eqn:rndofmcr},
    and where the second equality follows by the complex version of \autoref{prp:signedprodvecmeasureDmultid}.
\end{proof}

\begin{proposition}\label{prp:containpresprodndcomplex}
    Let $\valpha$, $\valpha'$, $\vbeta$, $\vbeta'$ be $n$-dimensional complex measures.
    If $\ld(\valpha) \subseteq \ld(\valpha')$, $\ld(\vbeta) \subseteq \ld(\vbeta')$,
    then $\ld(\valpha \times \vbeta) \subseteq \ld(\valpha' \times \vbeta')$.
\end{proposition}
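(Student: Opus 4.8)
\noindent The plan is to transfer the problem to the real $2n$-dimensional setting already settled in \autoref{prp:containpresprodnd}, using the linear bijection $\psi$ and the operator $\mcr{\cdot}$. By \autoref{prp:equivalenceofldandmcrld} the hypotheses are equivalent to the real inclusions $\ld(\mcr{\valpha})\subseteq\ld(\mcr{\valpha'})$ and $\ld(\mcr{\vbeta})\subseteq\ld(\mcr{\vbeta'})$, and the desired conclusion is equivalent to $\ld(\mcr{\valpha\times\vbeta})\subseteq\ld(\mcr{\valpha'\times\vbeta'})$; so it suffices to establish this last inclusion between Lorenz hulls of $2n$-dimensional real signed measures. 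As in \autoref{prp:containpresprodnd}, I would first reduce to the case $\vbeta=\vbeta'$, the general statement then following by commutativity of the product together with a symmetric replacement of $\vbeta$ by $\vbeta'$.

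The reason \autoref{prp:containpresprodnd} cannot be quoted verbatim is that $\mcr{\valpha\times\vbeta}$ is \emph{not} the real product $\mcr{\valpha}\times\mcr{\vbeta}$: by \autoref{prp:compleximgprodvecmeasureasint} its density with respect to $\mu\times\nu$ is $\ccwp(\vdm{\mcr{\valpha}}{\mu}(s),\vdm{\mcr{\vbeta}}{\nu}(t))$, where the coordinate-wise complex product $\ccwp$ replaces the plain product $\cwp$. I would therefore rerun the proof of \autoref{prp:containpresprodnd} with $\ccwp$ in place of $\cwp$. The only new ingredient is the analogue of the adjoint identity \eqref{eqn:yapla}: writing $\vec u=\psi^{-1}(\vx)$, $\vec v=\psi^{-1}(\vy)$, $\vec w=\psi^{-1}(\vz)$ and using that $\dpr{\psi(\vec u)}{\psi(\vec v)}=\Re\sum_i\overline{u_i}v_i$, one gets $\dpr{\vx}{\ccwp(\vy,\vz)}=\Re\sum_i\overline{u_i}\,v_i\,w_i=\Re\sum_i\overline{(u_i\overline{w_i})}\,v_i=\dpr{\psi(\cwp(\vec u,\overline{\vec w}))}{\vy}$, where $\overline{\vec w}$ denotes complex conjugation, i.e.\ sign-flipping the imaginary coordinates of $\rr^{2n}$.

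Accordingly I set $\vxs_t:=\psi(\cwp(\psi^{-1}(\vxs),\overline{\psi^{-1}(\vdm{\mcr{\vbeta}}{\nu}(t))}))$, a measurable function of $t$ since $\psi$, $\psi^{-1}$, conjugation and $\cwp$ are continuous and $\vdm{\mcr{\vbeta}}{\nu}$ is measurable; this yields the exact counterpart of \eqref{eqn:sothat}, namely $\dpr{\vxs}{\ccwp(\vdm{\mcr{\valpha}}{\mu}(s),\vdm{\mcr{\vbeta}}{\nu}(t))}=\dpr{\vxs_t}{\vdm{\mcr{\valpha}}{\mu}(s)}$. With this in hand the remaining steps copy \autoref{prp:containpresprodnd}: by \autoref{prp:reachofld}, linearity, and Fubini (\autoref{lem:fubini}) one writes $\dual{\ld(\mcr{\valpha\times\vbeta})}{\vxs}=\int_T\dual{\ld(\mcr{\valpha})}{\vxs_t}\,\nu(\di t)$, bounds the integrand using $\ld(\mcr{\valpha})\subseteq\ld(\mcr{\valpha'})$ and \autoref{prp:suppconju}, integrates back to $\dual{\ld(\mcr{\valpha'\times\vbeta})}{\vxs}$, and concludes $\ld(\mcr{\valpha\times\vbeta})\subseteq\ld(\mcr{\valpha'\times\vbeta})$ by \autoref{prp:suppconju} once more; \autoref{prp:equivalenceofldandmcrld} then returns the complex statement, and composing with the symmetric $\vbeta$-step finishes the proof.

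The main thing to watch is that the conjugation in $\vxs_t$ is harmless: it is a fixed real-linear bijection of $\rr^{2n}$, so for each fixed $t$ the region $\{s:\dpr{\vxs_t}{\vdm{\mcr{\valpha}}{\mu}(s)}>0\}$ is still a genuine element of $\F$ depending only on $s$, the Fubini step factorizes as before, and $t\mapsto\dual{\ld(\mcr{\valpha})}{\vxs_t}$ remains measurable. Its integrability—needed to make the displayed chain meaningful—follows from positive homogeneity and boundedness of the reach function on the unit sphere, which bound $\dual{\ld(\mcr{\valpha})}{\vxs_t}$ by a constant times $\|\vxs_t\|$, whose $\nu$-integral is finite because $|\mcr{\vbeta}|(T)<\infty$.
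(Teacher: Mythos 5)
Your proposal is correct and takes essentially the same route as the paper's own proof: reduce to $\vbeta=\vbeta'$, transfer to $\rr^{2n}$ via $\psi$ and \autoref{prp:equivalenceofldandmcrld}, use \autoref{prp:compleximgprodvecmeasureasint} for the density of $\mcr{\valpha\times\vbeta}$, and exploit the conjugate-adjoint identity $\dpr{\vx}{(\ccwp(\vy,\vz))}=\dpr{(\ccwp(\vx,\overline{\vz}))}{\vy}$ (the paper's \eqref{eqn:complexxyzrprod}, which you derive equivalently through complex arithmetic) to define the shifted direction and rerun the reach-function/Fubini chain of \autoref{prp:containpresprodnd}. Your closing remarks on measurability and $\nu$-integrability of $t\mapsto\dual{\ld(\mcr{\valpha})}{\vxs_t}$ are correct details that the paper leaves implicit.
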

\begin{proof}
    Similarly to the proof of \autoref{prp:containpresprodnd}, we can assume $\vbeta = \vbeta'$.
    Let $\valpha$ be on $(S, \F)$, $\valpha'$ be on $(S', \F')$, $\vbeta$ be on $(T, \G)$.
    Let $\mu$, $\mu'$ and $\nu$ be $\sigma$-finite positive measures dominating $\valpha$,
    $\valpha'$ and $\vbeta$, respectively, on their respective spaces.
    Then $\mu \times \nu$ dominates $\mcr{\valpha \times \vbeta}$ and
    \phantom{aaaaaaaaaaaaaaaaaaaaaaaa}
    \begin{equation}
        \vdrabmunu = \ccwp(\vdramu, \vdrbnu) \label{eqn:mcrprodrdabmunu}
    \end{equation}
    by \autoref{prp:compleximgprodvecmeasureasint}.

    Noting that
    \begin{align}
        & \quad \ \dpr{\vx}{(\ccwp(\vy, \vz))} \nonumber \\
        & = \sum_{k=1}^{n} x_{2k-1}(y_{2k-1}z_{2k-1} - y_{2k}z_{2k}) + x_{2k}(y_{2k-1}z_{2k} + y_{2k}z_{2k-1}) \nonumber \\
        & = \sum_{k=1}^{n} (x_{2k-1}z_{2k-1} + x_{2k}z_{2k})y_{2k-1} + (-x_{2k-1}z_{2k} + x_{2k}z_{2k-1})y_{2k} \nonumber \\
        & = \dpr{(\ccwp(\vx, \overline{\vz}))}{\vy} \label{eqn:complexxyzrprod}
    \end{align}
    for all $\vx = (x_1, x_2, \ldots, x_{2n-1}, x_{2n})$, $\vy = (y_1, y_2, \ldots, y_{2n-1}, y_{2n})$,
    $\vz = (z_1, z_2, \ldots,$ $z_{2n-1}, z_{2n}) \in \rr^{2n}$,
    where $\overline{\vz} \coloneqq \psi(\overline{\psi^{-1}(\vz)}) = (z_1, -z_2, \ldots, z_{2n-1}, -z_{2n})$,
    we define
    \phantom{aaaaaaaaaaaaaaaaaaaaaaaaaaaaaaaaaaaaaaaaaaaaaaaaaaaaa}
    $$
    \vxs_{\overline{t}} := \ccwp\Big(\vxs, \overline{\vdrbnu(t)}\Big)
    $$
    for all $t \in T$, $\vxs \in \rr^{2n}$, so that
    \begin{equation}\label{eqn:sothatcomplex}
    \dpr{\vxs}{\vdrabmunu(s, t)} = \dpr{\vxs_{\overline{t}}}{\vdramu(s)}
    \end{equation}
    for all $(s, t) \in S \times T$, $\vxs \in \rr^{2n}$, by \eqref{eqn:mcrprodrdabmunu}, \eqref{eqn:complexxyzrprod}.
    Since $\ld(\mcr{\valpha}) \subseteq \ld(\mcr{\valpha'})$ by \autoref{prp:equivalenceofldandmcrld}, one has
    \begin{align*}
        \dual{\ld(\mcr{\valpha \times \vbeta})}{\vxs}
        & = \dpr{\vxs}{\mcr{\valpha \times \vbeta}(\Dins{\vdrabmunu}{>})} \\
        & = \int_{\Dins{\vdrabmunu}{>}} \dpr{\vxs}{\vdrabmunu}(s, t) \,(\mu \times \nu)(\di(s, t)) \\
        & = \int_{T} \int_{\domc{\dpr{\vxs_{\overline{t}}}{\vdramu}}{>}{0}} \dpr{\vxs_{\overline{t}}}{\vdramu(s)}
            \,\mu(\di s) \,\nu(\di t) \\
        & = \int_T \dpr{\vxs_{\overline{t}}}{\mcr{\valpha}(\domc{\dpr{\vxs_{\overline{t}}}{\vdramu}}{>}{0})} \,\nu(\di t) \\
        & = \int_T \dual{\ld(\mcr{\valpha})}{\vxs_{\overline{t}}} \,\nu(\di t) \\
        & \leq \int_T \dual{\ld(\mcr{\valpha'})}{\vxs_{\overline{t}}} \,\nu(\di t) \\
        & = \dual{\ld(\mcr{\valpha \times \vbeta})}{\vxs}
    \end{align*}
    for all $\vxs \in \mathsf{Sph}_{\rr^{2n}}$,
    where the first and fifth equalities follow by \autoref{prp:reachofld},
    where the second and fourth equalities follow by linearity of the integral,
    where the third equality follows by \eqref{eqn:sothatcomplex} and by Fubini's theorem (\autoref{lem:fubini}),
    and where the inequality follows by \autoref{prp:suppconju}.
    Thus, by \autoref{prp:suppconju} again, $\ld(\mcr{\valpha \times \vbeta}) \subseteq \ld(\mcr{\valpha' \times \vbeta})$,
    so that $\ld(\valpha \times \vbeta) \subseteq \ld(\valpha' \times \vbeta)$ by \autoref{prp:equivalenceofldandmcrld}.
\end{proof}

\noindent
It follows that \autoref{col:proddeterndsigned} holds for $n$-dimensional complex measures $\valpha$, $\valpha'$, $\vbeta$, $\vbeta'$,
so that the Lorenz product can also be defined for Lorenz hulls of complex vector measures,
for which inclusion-preservation and \autoref{prp:lorenzdistrblaw} still hold.

\section{The Lorenz Product of Lorenz Skeletons}

It is a natural question that whether the Lorenz skeleton $\lsk(\valpha \times \vbeta)$
keeps invariant while the underlying $n$-dimensional finite signed measures $\valpha$ and $\vbeta$
alter in a way that keeps $\lsk(\valpha)$ and $\lsk(\vbeta)$ unchanged.
We give a positive answer to this question, starting with analysing a discrete case.

\begin{definition}\label{def:hausdorffdistance}
    Let $p$ denote a chosen norm that is compatible with the Euclidean topology on $\rr^n$. Let
    $$
    d(\vx, A) = \inf_{\vy \in A} p(\vx - \vy)
    $$
    for all $\vx \in \rr^n$, $A \subseteq \rr^n$.
    The \emph{Hausdorff distance} $\dhaus(A, B)$ induced by $p$ between subsets $A$ and $B$ of $\rr^n$ is defined by
    $$
    \dhaus(A, B) = \max\{\sup_{\vx \in A} d(\vx, B), \sup_{\vy \in B} d(\vy, A)\}
    $$
    for all $A$, $B \subseteq \rr^n$.
\end{definition}

\noindent
Two closed sets $A$, $B \subseteq \rr^n$ equal to each other if and only if $\dhaus(A, B) = 0$.

Let $\|\cdot\|_1$ denote the $1$-norm on $\rr^n$, i.e.,
$$
\|\vx\|_1 = \sum_{i=1}^{n}|x_i|
$$
for all $\vx = (x_1, \ldots, x_n) \in \rr^n$.
All Hausdorff distance appear in this section should be seen as induced by the $1$-norm.

\begin{proposition}\label{prp:aboutvdaa}
    Let $\valpha = (\alpha_1, \ldots, \alpha_n)$ be an $n$-dimensional finite signed measure on $(S, \F)$.
    Then \phantom{aaaaaaaaaaaaaaaaaaaaaaaaaaaaaaaaaaa}
    $$
    \|\vdm{\valpha}{|\valpha|}\|_1 = 1
    $$
    and \phantom{aaaaaaaaaaaaaaaaaaaaaaaaaaaaaaaaaaaaaaaaa}
    $$
    \bigg|\lm{\alpha_i}{|\valpha|}\bigg| = \lm{|\alpha_i|}{|\valpha|} \leq 1
    $$
    $|\valpha|$-almost everywhere.
\end{proposition}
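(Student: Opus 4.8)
The plan is to reduce everything to the uniqueness clause of the Radon-Nikodym theorem, relying on the single observation that $|\valpha| = \sum_{i=1}^n |\alpha_i|$ is a finite (hence $\sigma$-finite) positive measure that dominates each $\alpha_i$ and each $|\alpha_i|$. Consequently all the derivatives $\lm{\alpha_i}{|\valpha|}$ and $\lm{|\alpha_i|}{|\valpha|}$ are well defined, and the whole proposition will follow by repeatedly identifying pairs of functions that are Radon-Nikodym derivatives of the same measure with respect to $|\valpha|$, and then summing over $i$.

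First I would prove the pointwise identity $\big|\lm{\alpha_i}{|\valpha|}\big| = \lm{|\alpha_i|}{|\valpha|}$. Applying \autoref{lem:tvisintegralofab} with $\mu = |\valpha|$ and $g = \lm{\alpha_i}{|\valpha|}$ — legitimate since $\alpha_i(A) = \int_A \lm{\alpha_i}{|\valpha|} \, \di|\valpha|$ for all $A \in \F$ by definition of the derivative — yields $|\alpha_i|(A) = \int_A \big|\lm{\alpha_i}{|\valpha|}\big| \, \di|\valpha|$ for all $A \in \F$. On the other hand $|\alpha_i|(A) = \int_A \lm{|\alpha_i|}{|\valpha|} \, \di|\valpha|$ by definition of $\lm{|\alpha_i|}{|\valpha|}$. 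Thus both $\big|\lm{\alpha_i}{|\valpha|}\big|$ and $\lm{|\alpha_i|}{|\valpha|}$ are Radon-Nikodym derivatives of the positive measure $|\alpha_i|$ with respect to $|\valpha|$, so they coincide $|\valpha|$-almost everywhere by the uniqueness clause of \autoref{lem:RadonNikodym}.

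Next I would sum over $i$. By linearity of the integral and the identity $|\valpha| = \sum_{i=1}^n |\alpha_i|$, one has $\int_A \sum_{i=1}^n \lm{|\alpha_i|}{|\valpha|} \, \di|\valpha| = \sum_{i=1}^n |\alpha_i|(A) = |\valpha|(A) = \int_A 1 \, \di|\valpha|$ for every $A \in \F$, so $\sum_{i=1}^n \lm{|\alpha_i|}{|\valpha|} = 1$ almost everywhere, again by uniqueness of the derivative of $|\valpha|$ with respect to itself. Combining with the previous step gives $\|\vdm{\valpha}{|\valpha|}\|_1 = \sum_{i=1}^n \big|\lm{\alpha_i}{|\valpha|}\big| = \sum_{i=1}^n \lm{|\alpha_i|}{|\valpha|} = 1$ almost everywhere. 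Finally, each $\lm{|\alpha_i|}{|\valpha|}$ is nonnegative almost everywhere (being the derivative of a positive measure, with range in $[0,\infty)$ per \autoref{lem:RadonNikodym}), and since these nonnegative terms sum to $1$, each one is bounded by $1$, which is the claimed inequality.

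I do not anticipate a genuine obstacle here: the content is entirely bookkeeping around absolute continuity and the almost-everywhere uniqueness of densities. The only points needing care are verifying that $|\valpha|$ dominates each $\alpha_i$ (immediate from $|\alpha_i| \le |\valpha|$) and that $|\valpha|$ is finite, so that \autoref{lem:RadonNikodym} applies, together with keeping track of the fact that every resulting equality holds only $|\valpha|$-almost everywhere.
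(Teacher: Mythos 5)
Your proof is correct, and its first step coincides with the paper's: both identify $\big|\lm{\alpha_i}{|\valpha|}\big|$ with $\lm{|\alpha_i|}{|\valpha|}$ by feeding $\mu = |\valpha|$ and $g = \lm{\alpha_i}{|\valpha|}$ into \autoref{lem:tvisintegralofab} and then invoking almost-everywhere uniqueness of densities. Where you genuinely diverge is in the tool used to pass from ``the integrals agree on every $A \in \F$'' to a pointwise almost-everywhere statement, and in the order of deductions. The paper makes two separate applications of \autoref{lem:almosteverywhererange} (Rudin's Theorem 1.40): once with $E = [0,1]$, obtaining $\lm{|\alpha_i|}{|\valpha|} \leq 1$ directly from the monotonicity $0 \leq |\alpha_i|(A) \leq |\valpha|(A)$, and once with $E = \{0\}$ applied to $\|\vdaa\|_1 - 1$, obtaining the norm identity. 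You instead dispense with \autoref{lem:almosteverywhererange} entirely: you get $\sum_{i=1}^n \lm{|\alpha_i|}{|\valpha|} = 1$ almost everywhere from the uniqueness clause of \autoref{lem:RadonNikodym} (both sides being densities of $|\valpha|$ with respect to itself), and then read off the bound $\leq 1$ as a corollary of nonnegativity of the summands together with the fact that they sum to $1$. Both arguments are complete; yours is more economical in machinery, since Radon-Nikodym uniqueness is already needed just to speak of the derivatives, while the paper's proves the inequality independently of the norm identity, which is marginally more robust (it would survive knowing only $|\alpha_i| \leq |\valpha|$ rather than the exact identity $\sum_i |\alpha_i| = |\valpha|$).
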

\begin{proof}
    The fact
    $$
    \bigg|\lm{\alpha_i}{|\valpha|}\bigg| = \lm{|\alpha_i|}{|\valpha|}
    $$
    follows by \autoref{lem:tvisintegralofab} applied with $|\valpha|$ in place of $\mu$
    and $\lm{\alpha_i}{|\valpha|}$ in place of $g$. That
    $$
    \lm{|\alpha_i|}{|\valpha|} \leq 1
    $$
    follows by \autoref{lem:almosteverywhererange} applied with $[0, 1]$ in place of $E$
    since $0 \leq |\alpha_i|(A) \leq |\valpha|(A)$ for all $A \in \F$.
    One then has
    $$
    \int_A (\|\vdaa\|_1 - 1) \,\di|\valpha| = \sum_{i=1}^{n} \int_A \bigg|\lm{\alpha_i}{|\valpha|}\bigg| \,\di|\valpha| - |\valpha|(A) = 0
    $$
    and it follows that $\|\vdaa\|_1 = 1$ $|\valpha|$-almost everywhere by \autoref{lem:almosteverywhererange}
    applied with $\{0\}$ in place of $E$.
\end{proof}

\noindent
Let \phantom{aaaaaaaaaaaaaaaaaaaaaaaaaaaaaaaaaaaaaaaaaaaaaa}
$$
\Unit \coloneqq \{\vx \in \rr^n \,:\, \|\vx\|_1 = 1\}
$$
be the unit sphere in $\rr^n$ with respect to the $1$-norm.
It follows by \autoref{prp:aboutvdaa} that $\vdaa(s) \in \Unit$ for all $s \in S$ except
for $s \in E$ for some $E \in \F$ such that $|\valpha|(E) = 0$.


One notes the following basic fact:
if $z_1$, $\ldots$, $z_N$ are real numbers then their exists $I \subseteq [N]$ such that \phantom{aaaaaaaaaaa}
$$
\Bigg|\sum_{k \in I} z_k\Bigg| \geq \frac{1}{2} \sum_{k=1}^{N}|z_k|.
$$
Then for a finite signed measure $\beta$ on $(T, \G)$,
$$
|\beta|(T) \leq 2 \sup_{B \in \G}|\beta(B)|
$$
by definition of the total variation $|\beta|$ of $\beta$.
Consider $n$-dimensional finite signed measure $\vbeta$ on $(T, \G)$
and its total variation $|\vbeta|$ with respect to the $1$-norm, one has \phantom{aaaaaaaaaaaaaaaaaaaaaaaaaaaaaaaa}
$$
|\vbeta|(T) \leq 2nM
$$
where $M$ is any nonnegative number such that
$\lsk(\vbeta) \subseteq \cube(M)$ where
$$
\cube(M) \coloneqq \{(z_1, \ldots, z_n) \in \rr^n \,:\, |z_i| \leq M, 1 \leq i \leq n\}
$$
for $M \geq 0$.

\begin{proposition}\label{prp:discreteskeletonproduct}
    Let $\valpha$, $\valpha'$, $\vbeta$, $\vbeta'$ be $n$-dimensional finite signed measures
    on $(S, \F)$, $(S', \F')$, $(T, \G)$, $(T', \G')$, respectively,
    where all four measurable spaces are countable sets with their discrete $\sigma$-algebras,
    and where $\lsk(\valpha)$, $\lsk(\valpha')$, $\lsk(\vbeta)$, $\lsk(\vbeta')$ are all included in $\cube(M)$ for a fixed $M > 0$.
    Let $\veps > 0$, then \phantom{aaaaaaaaaaaaaaaaaaaaaaaaaaaaa}
    $$
    \dhaus\big(\lsk(\valpha \times \vbeta), \lsk(\valpha' \times \vbeta')\big) < \veps
    $$
    whenever $\dhaus\big(\lsk(\valpha), \lsk(\valpha')\big) < \delta$, $\dhaus\big(\lsk(\vbeta), \lsk(\vbeta')\big) < \delta$
    for $\delta < \veps/4nM$.
\end{proposition}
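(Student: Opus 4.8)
The plan is to exploit discreteness to give an explicit ``slice'' description of each product skeleton and then transport points across in two stages, mirroring the two-stage replacement used in \autoref{prp:containpresprodnd}. Write $S = \{s_1, s_2, \dots\}$, $T = \{t_1, t_2, \dots\}$ and set $a_j := \valpha(\{s_j\})$, $b_k := \vbeta(\{t_k\})$; by \eqref{eqn:prodofsignedmeasuresisprod} the atom of $\valpha \times \vbeta$ at $(s_j, t_k)$ is $\cwp(a_j, b_k)$. Since the total variations are finite---indeed $|\valpha|(S), |\vbeta|(T) \le 2nM$ by the bound recorded just before this proposition---the double series $\sum_{j,k} \cwp(a_j, b_k)$ converges absolutely, so countable additivity together with regrouping by rows (legitimate by absolute convergence) yields, for any $E \subseteq S \times T$ with row slices $T_j := \{t : (s_j, t) \in E\}$, the identity $(\valpha \times \vbeta)(E) = \sum_j \cwp(a_j, \vbeta(T_j))$. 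First I would record the resulting representation
$$
\lsk(\valpha \times \vbeta) = \Big\{ \textstyle\sum_j \cwp(a_j, c_j) \,:\, c_j \in \lsk(\vbeta) \text{ for all } j\Big\},
$$
together with its column-indexed mirror image; convergence of the right-hand sums for arbitrary choices is guaranteed by $\sum_j \|\cwp(a_j, c_j)\|_1 \le M \sum_j \|a_j\|_1 = M\,|\valpha|(S) < \infty$, using $\lsk(\vbeta) \subseteq \cube(M)$.

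With this in hand, the transport proceeds in two steps. Given $z \in \lsk(\valpha \times \vbeta)$, write $z = \sum_j \cwp(a_j, c_j)$ with $c_j \in \lsk(\vbeta)$; because $\dhaus(\lsk(\vbeta), \lsk(\vbeta')) < \delta$, each $c_j$ lies within $\delta$ of some $c_j' \in \lsk(\vbeta')$, i.e. $\|c_j - c_j'\|_1 < \delta$. Setting $z'' := \sum_j \cwp(a_j, c_j') \in \lsk(\valpha \times \vbeta')$, a coordinatewise estimate gives
$$
\|z - z''\|_1 = \sum_i \Big| \sum_j a_{j,i}(c_j - c_j')_i \Big| \le \delta \sum_i \sum_j |a_{j,i}| = \delta\,|\valpha|(S) \le 2nM\delta,
$$
where I have used $|(c_j - c_j')_i| \le \|c_j - c_j'\|_1 < \delta$ and $\sum_i\sum_j |a_{j,i}| = \sum_i |\alpha_i|(S) = |\valpha|(S)$. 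Re-expressing $z''$ through its column slices over $T'$ as $z'' = \sum_{t'} \cwp(d_{t'}, b'_{t'})$ with $d_{t'} \in \lsk(\valpha)$ and $b'_{t'} := \vbeta'(\{t'\})$, and replacing each $d_{t'}$ by some $d_{t'}' \in \lsk(\valpha')$ within $\delta$, the identical estimate bounds $\|z'' - z'\|_1 \le \delta\,|\vbeta'|(T') \le 2nM\delta$ for $z' := \sum_{t'} \cwp(d_{t'}', b'_{t'}) \in \lsk(\valpha' \times \vbeta')$. The triangle inequality then yields $\|z - z'\|_1 \le 4nM\delta < \veps$ by the hypothesis $\delta < \veps/4nM$.

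Hence $\sup_{z \in \lsk(\valpha \times \vbeta)} d(z, \lsk(\valpha' \times \vbeta')) \le 4nM\delta < \veps$, and since the whole argument is symmetric under exchanging the primed and unprimed measures, the reverse supremum is bounded the same way; taking the maximum gives $\dhaus(\lsk(\valpha \times \vbeta), \lsk(\valpha' \times \vbeta')) < \veps$, as required.

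The one genuinely delicate point is the slice representation itself. I expect the main obstacle to be the bookkeeping around the countable sums---justifying both the row/column regrouping of $(\valpha \times \vbeta)(E)$ and the convergence of $\sum_j \cwp(a_j, c_j)$ for arbitrary $c_j \in \lsk(\vbeta)$---for which absolute convergence driven by the finite total variations $|\valpha|(S), |\vbeta|(T) \le 2nM$ is the key enabling fact. Everything after that reduces to two applications of the single coordinatewise inequality above.
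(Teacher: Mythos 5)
Your proof is correct, and it reaches the paper's conclusion by the same overall strategy but with genuinely different machinery. Strategically, both arguments are two-stage, one-factor-at-a-time replacements: your intermediate skeleton is $\lsk(\valpha \times \vbeta')$, the paper's is $\lsk(\valpha' \times \vbeta)$, and in both cases each stage costs $\delta$ times the total variation ($\leq 2nM$) of the untouched factor, giving $4nM\delta < \veps$ in total. The difference is in how the skeleton points are handled. The paper represents them as integrals $\int h \,\di(\valpha \times \vbeta)$ of $\{0,1\}$-valued functions, builds the replacement function slice-by-slice, and bounds the error using Fubini's theorem together with the normalization $\|\vdaa\|_1 = 1$ almost everywhere (\autoref{prp:aboutvdaa}). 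You instead derive the explicit atom-level representation $\lsk(\valpha \times \vbeta) = \big\{\sum_j \cwp(a_j, c_j) \,:\, c_j \in \lsk(\vbeta)\big\}$ (valid: both inclusions follow from countable additivity, the absolute convergence $\sum_{j,k} \|\cwp(a_j, b_k)\|_1 \leq |\valpha|(S)\,|\vbeta|(T) < \infty$, and the fact that the product of discrete $\sigma$-algebras on countable sets is again the discrete $\sigma$-algebra, so your sets $E$ are measurable) and then need only elementary $\ell_1$ estimates, avoiding Fubini and Radon--Nikodym derivatives entirely. Your route is thus more self-contained and elementary, and the slice representation is a clean statement in its own right. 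What the paper's integral formulation buys is generality: since it needs countability only to assemble a function from its slices, each stage requires just one of the two relevant spaces to be countable, which is exactly why the paper can remark after the proof that the proposition holds when only $T$ and $S'$ are countable. Your bookkeeping uses countability of $S$ and $T'$ for one direction of the Hausdorff bound and of $S'$ and $T$ for the other, so it requires all four spaces to be discrete and does not immediately yield that strengthening.
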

\begin{proof}
    Firstly note that all functions in this proof are measurable since the underlying spaces are discrete,
    and note that $|\valpha|(S)$, $|\valpha'|(S')$, $|\vbeta|(T)$, $|\vbeta|(T') \leq 2nM$.
    Let $\delta < \veps/4nM$.
    If $\dhaus\big(\lsk(\valpha), \lsk(\valpha')\big) < \delta$, then in particular for any measurable $f: S \to \{0, 1\}$
    there exists a measurable $f': S' \to \{0, 1\}$ such that \phantom{aaaaaaaaaaaa}
    $$
    \bigg\|\int_S f \,\di\valpha - \int_{S'} f' \,\di\valpha'\bigg\|_1 < \delta.
    $$
    Now for any $h: S \times T \to \{0, 1\}$ one can construct $h': S' \times T \to \{0, 1\}$ such that
    $$
    \bigg\|\int_S h_t \,\di\valpha - \int_{S'} h'_t \,\di\valpha'\bigg\|_1 < \delta.
    $$
    for all $t \in T$, where $h_t: S \to \{0, 1\}$, $h'_t: S' \to \{0, 1\}$ are defined by
    $$
    h_t(s) = h(s, t), \quad h'_t(s') = h'(s', t)
    $$
    for all $s \in S$, $s' \in S'$.
    {\allowdisplaybreaks Then
    \begin{align*}
        & \quad \ \bigg\|\int_{S \times T} h \,\di(\valpha \times \vbeta) - \int_{S' \times T} h' \,\di(\valpha' \times \vbeta)\bigg\|_1 \\
        & = \bigg\|\int_T \int_S h(s, t) \cwp(\vdaa(s), \vdbb(t)) \,|\valpha|(\di s) \,|\vbeta|(\di t) \\
        & \qquad \qquad \qquad - \int_T \int_S' h(s', t) \cwp(\vdm{\valpha'}{|\valpha'|}(s'), \vdbb(t)) \,|\valpha'|(\di s') \,|\vbeta|(\di t)\bigg\|_1 \\
        & = \bigg\|\int_T \cwp\bigg(\bigg(\int_S h_t\vdaa \,\di|\valpha| - \int_{S'} h'_t\vdm{\valpha'}{|\valpha'|} \,\di|\valpha'|\bigg),
            \vdbb(t)\bigg) \,|\vbeta|(\di t)\bigg\|_1 \\
        & \leq \int_T \bigg\|\cwp\bigg(\bigg(\int_S h_t \,\di\valpha - \int_{S'} h'_t \,\di\valpha'\bigg),
            \vdbb(t)\bigg)\bigg\|_1 \,|\vbeta|(\di t) \\
        & \leq \int_T \bigg\|\bigg(\int_S h_t \,\di\valpha - \int_{S'} h'_t \,\di\valpha'\bigg)\bigg\|_1 \,|\vbeta|(\di t) < 2nM\delta
    \end{align*}
    where the first equality follows
    by \autoref{prp:signedprodvecmeasureDmultid} and by Fubini's theorem \autoref{lem:fubini},}
    where the second equality follows by linearity of integrations,
    and where the second inequality follows by \autoref{prp:aboutvdaa}.
    This shows that for each $\vx \in \lsk(\valpha \times \vbeta)$ there exists $\vx' \in \lsk(\valpha' \times \vbeta)$
    such that $\|\vx - \vx'\|_1 < 2nM\delta$.

    A symmetric argument shows that for each $\vx' \in \lsk(\valpha' \times \vbeta)$ there exists $\vx \in \lsk(\valpha \times \vbeta)$
    such that $\|\vx - \vx'\|_1 < 2nM\delta$.
    Then $\dhaus\big(\lsk(\valpha \times \vbeta), \lsk(\valpha' \times \vbeta)\big) \leq 2nM\delta$ by definition.
    Similarly, $\dhaus\big(\lsk(\valpha' \times \vbeta), \lsk(\valpha' \times \vbeta')\big) \leq 2nM\delta$.
    It follows that $\dhaus\big(\lsk(\valpha \times \vbeta), \lsk(\valpha' \times \vbeta')\big) \leq 4nM\delta < \veps$.
\end{proof}

\noindent
In fact, \autoref{prp:discreteskeletonproduct} holds when only $T$ and $S'$ are known to be countable by the above proof.
In particular, $\dhaus\big(\lsk(\valpha \times \vbeta), \lsk(\valpha' \times \vbeta)\big) \leq 2nM\delta$ when $T$ is countable,
and $\dhaus\big(\lsk(\valpha' \times \vbeta), \lsk(\valpha' \times \vbeta')\big) \leq 2nM\delta$ when $S'$ is countable.

\begin{proposition}\label{prp:fubiniwithatoms}
    Let $\valpha$, $\vbeta$ be $n$-dimensional finite signed measures on $(S, \F)$, $(T, \G)$, respectively.
    Let $A$ be an atom of $|\valpha|$ and $B$ be an atom of $|\vbeta|$.
    Let $f: S \times T \to \rr$ be bounded and measurable.
    Then
    $$
    \int_{S \times B} f \,\di(\valpha \times \vbeta) = \cwp\bigg(\int_S f(s, t_0) \,\valpha(\di s), \vbeta(B)\bigg)
    $$
    for some $t_0 \in B$,
    $$
    \int_{A \times T} f \,\di(\valpha \times \vbeta) = \cwp\bigg(\valpha(A), \int_T f(s_0, t) \,\vbeta(\di t)\bigg)
    $$
    for some $s_0 \in A$, and
    $$
    \int_{A \times B} f \,\di(\valpha \times \vbeta) = f(s_0, t_0)\big(\cwp(\valpha(A), \vbeta(B))\big)
    $$
    for some $s_0 \in A$, $t_0 \in B$.
\end{proposition}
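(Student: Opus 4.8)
The plan is to reduce each of the three identities to property (iv) of \autoref{prp:atomsofvecmeasures}, which asserts that any finite tuple of measurable functions is constant almost everywhere on an atom. Throughout I would fix $\mu = |\valpha|$ and $\nu = |\vbeta|$, so that by \autoref{prp:signedprodvecmeasureDmultid} the product $\mu \times \nu$ dominates $\valpha \times \vbeta$ with $\vdabmunu = \cwp(\vdamu, \vdbnu)$, and so that the coordinatewise derivatives $\lm{\alpha_i}{\mu}$, $\lm{\beta_i}{\nu}$ are bounded in modulus by $1$ by \autoref{prp:aboutvdaa}. Since $f$ is bounded and $\mu$, $\nu$ are finite, every integrand encountered is bounded and integrable, so \autoref{lem:fubini} applies freely.

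For the first identity I would expand, via \eqref{eqn:intonvecmeasures}, the $i$-th coordinate of $\int_{S \times B} f \,\di(\valpha \times \vbeta)$ as $\int_{S \times B} f(s,t)\,\lm{\alpha_i}{\mu}(s)\,\lm{\beta_i}{\nu}(t)\,\di(\mu \times \nu)$ and apply Fubini to rewrite it as $\int_B \lm{\beta_i}{\nu}(t)\, g_i(t)\,\nu(\di t)$, where $g_i(t) := \int_S f(s,t)\,\lm{\alpha_i}{\mu}(s)\,\mu(\di s)$. Fubini also guarantees each $g_i$ is measurable in $t$, which is precisely what licenses the next step: applying property (iv) of \autoref{prp:atomsofvecmeasures} to the tuple $(g_1, \dots, g_n)$ on the atom $B$ of $\nu$ furnishes a single $t_0 \in B$ at which all the $g_i$ simultaneously equal their ($\nu$-a.e.\ constant) value on $B$ (such a point exists since $\nu(B) > 0$). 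Replacing $g_i(t)$ by $g_i(t_0)$ inside the integral and pulling it out leaves $g_i(t_0)\,\beta_i(B) = \big(\int_S f(s,t_0)\,\di\alpha_i\big)\beta_i(B)$, the $i$-th coordinate of the claimed right-hand side. The second identity is the mirror image, integrating first in $s$ and invoking property (iv) on the atom $A$ of $\mu$.

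For the third identity I would first record the sub-claim that $A \times B$ is an atom of the finite positive measure $\mu \times \nu$: for any $E \in \H$, Fubini gives $(\mu \times \nu)\big((A\times B)\cap E\big) = \int_B \mu(A \cap E_t)\,\nu(\di t)$ with $E_t$ the $t$-section of $E$, and since $A$ is an atom of $\mu$ the integrand equals $\mu(A)\,\indi{F}(t)$ for a measurable set $F$, whence the integral is $\mu(A)\,\nu(B \cap F) \in \{0, \mu(A)\nu(B)\}$ because $B$ is an atom of $\nu$. Granting this, property (iv) of \autoref{prp:atomsofvecmeasures} applied to the single function $f$ on the atom $A \times B$ yields a point $(s_0, t_0)$ at which $f$ equals its $(\mu\times\nu)$-a.e.\ constant value there; pulling the constant $f(s_0,t_0)$ out of $\int_{A \times B} f\,\di(\valpha\times\vbeta)$ and using $(\valpha \times \vbeta)(A \times B) = \cwp(\valpha(A), \vbeta(B))$ (the coordinatewise form of \eqref{eqn:prodofsignedmeasuresisprod}) completes the computation.

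The main obstacle is not any single hard estimate but the bookkeeping around the atom-constancy step: one must secure measurability of the inner integrals \emph{before} speaking of their being constant a.e.\ (which is exactly why Fubini is invoked at that precise moment), and must apply property (iv) to the full tuple at once so that a single evaluation point serves all $n$ coordinates simultaneously. For the third identity the delicate point is the sub-claim that a product of atoms is an atom; alternatively one could chain the first two identities with a one-dimensional atom argument for the surviving inner integral, but the direct route avoids re-deriving restricted versions of the earlier identities.
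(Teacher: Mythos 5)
Your proof is correct. For the first two identities it is essentially the paper's argument: write the coordinates of the integral using $\vdm{\valpha\times\vbeta}{|\valpha|\times|\vbeta|} = \cwp(\vdaa, \vdbb)$, use Fubini both to pass to an iterated integral and to secure measurability of the inner integrals, and then apply property (iv) of \autoref{prp:atomsofvecmeasures} to the whole tuple at once so that a single $t_0 \in B$ serves all $n$ coordinates. The only divergence there is cosmetic, and in your favor: the paper folds the factor $\lm{\beta_i}{|\vbeta|}(t)$ into its inner functions $h^S_i$ and must therefore apply property (iv) to the $2n$-tuple $(\vec{h}^S, \vdbb)$, using the a.e.\ constancy of $\vdbb$ on $B$ to turn $\vdbb(t_0)\,|\vbeta|(B)$ into $\vbeta(B)$; you keep $\lm{\beta_i}{|\vbeta|}$ inside the outer integral, so that $\int_B \lm{\beta_i}{|\vbeta|}\,\di|\vbeta| = \beta_i(B)$ holds by definition of the Radon--Nikodym derivative and no constancy of $\vdbb$ is needed. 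For the third identity you genuinely depart from the paper, which disposes of it with ``the other equations follow by similar arguments'' (i.e., iterating the same Fubini-plus-atom-constancy step, essentially the chaining alternative you mention at the end): you instead prove the sub-claim that $A \times B$ is an atom of $|\valpha| \times |\vbeta|$ and apply property (iv) once, directly on the product space. That sub-claim and its proof are sound --- measurability of $t \mapsto |\valpha|(A \cap E_t)$ comes from Fubini applied to the indicator $\indi{(A \times T) \cap E}$, the atom property of $A$ makes that function two-valued, and the atom property of $B$ finishes --- and it buys a more direct, self-contained treatment of the third identity at the price of one auxiliary lemma that the paper never states.
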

\begin{proof}
    Let $\valpha = (\alpha_1, \ldots, \alpha_n)$, $\vbeta = (\beta_1, \ldots, \beta_n)$.
    Fubini's theorem \autoref{lem:fubini} implies that the function
    $h^S_i: T \to \rr$ defined by \phantom{aaaaaaaaaaaaaaaaaaaaaaaaaaaaaaaaaaaaaaa}
    $$
    h^S_i(t) = \int_S f(s, t) \lm{\alpha_i}{|\valpha|}(s) \lm{\beta_i}{|\vbeta|}(t) \,|\valpha|(\di s)
    $$
    for all $t \in T$ is measurable for each $i$, $1 \leq i \leq n$, so that the function $(\vec{h}^S, \vdbb): T \to \rr^{2n}$
    is $|\vbeta|$-almost constant on $B$ by (iv) in \autoref{prp:atomsofvecmeasures},
    where $\vec{h}^S: T \to \rr^n$ is defined by
    \begin{align*}
        \vec{h}^S(t)
        & = \int_S f(s, t) \big(\cwp(\vdaa(s), \vdbb(t))\big) \,|\valpha|(\di s) \\
        & = \cwp\bigg(\int_S f(s, t) \,\valpha(\di s), \vdbb(t)\bigg)
    \end{align*}
    for all $t \in T$,
    where the second equality follows by linearity of integrals. This implies \phantom{aaaaaaaaaaaaaa}
    \begin{align*}
        & \quad \ \int_{S \times B} f \,\di(\valpha \times \vbeta) \\
        & = \int_{S \times B} f \big(\cwp(\vdaa(s), \vdbb(t))\big) \,(|\valpha| \times |\vbeta|)(\di(s, t)) \\
        & = \int_B \int_S f \big(\cwp(\vdaa(s), \vdbb(t))\big) \,|\valpha|(\di s) \,|\vbeta|(\di t) \\
        & = \int_B \vec{h}^S \,\di|\vbeta| = \cwp\bigg(\int_S f(s, t_0) \,\valpha(\di s), \vdbb(t_0)\bigg)|\vbeta|(B) \\
        & = \cwp\bigg(\int_S f(s, t_0) \,\valpha(\di s), \vbeta(B)\bigg)
    \end{align*}
    for some $t_0 \in B$ for which $(\vec{h}^S, \vdbb) = (\vec{h}^S, \vdbb)(t_0)$ $|\vbeta|$-almost everywhere,
    where the second equality follows by Fubini's theorem \autoref{lem:fubini} and by linearity of integrals.
    The other equations follow by similar arguments.
\end{proof}

\begin{proposition}\label{prp:lskproddiscreteapprox}
    Let $\valpha$, $\vbeta$ be $n$-dimensional finite signed measures on $(S, \F)$, $(T, \G)$, respectively.
    For any $\veps > 0$, there exist $n$-dimensional finite signed measures $\valpha'$, $\vbeta'$ 
    on discrete $\sigma$-algebras $\F'$, $\G'$ of countable sets $S'$, $T'$, respectively,
    such that $\dhaus\big(\lsk(\valpha), \lsk(\valpha')\big) < \veps$, $\dhaus\big(\lsk(\vbeta), \lsk(\vbeta')\big) < \veps$,
    $\dhaus\big(\lsk(\valpha \times \vbeta), \lsk(\valpha' \times \vbeta')\big) < \veps$.
\end{proposition}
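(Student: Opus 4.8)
The plan is to realize $\valpha'$ and $\vbeta'$ as \emph{pushforwards} of $\valpha$ and $\vbeta$ along measurable quantization maps $\Pi\colon S\to S'$, $\Theta\colon T\to T'$, which makes half of each of the three Hausdorff comparisons automatic. Writing $\valpha=\valpha_{\mathrm{na}}\oplus\valpha_{\mathrm{at}}$ and $\vbeta=\vbeta_{\mathrm{na}}\oplus\vbeta_{\mathrm{at}}$ for the decompositions into non-atomic and purely atomic parts set up after \autoref{prp:atomsofvecmeasures}, I would quantize the two parts separately. The purely atomic part is collapsed atom-by-atom — one discrete point per chosen atom $A_i$ carrying mass $\valpha(A_i)$ — and by \autoref{prp:atomsofvecmeasures}(iv) this yields $\lsk(\valpha'_{\mathrm{at}})=\lsk(\valpha_{\mathrm{at}})$ exactly. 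For the non-atomic part I would partition $S_{\mathrm{na}}$ into finitely many pieces $S_1,\dots,S_N$ on which the unit-length density $\vdaa$ (cf. \autoref{prp:aboutvdaa}) varies by less than $\eta$ in $\|\cdot\|_1$ and on which $|\valpha|$-mass is below $\delta$ (possible since $\Unit$ is compact and $|\valpha|$ is non-atomic), and set $\valpha'_{\mathrm{na}}(\{j\})=\valpha(S_j)$. Since $\valpha'=\Pi_*\valpha$, one has $\lsk(\valpha')\subseteq\lsk(\valpha)$ and, because the product of pushforwards is the pushforward of the product (checked on rectangles and extended by \autoref{lem:signedmeasuresagree}), also $\lsk(\valpha'\times\vbeta')\subseteq\lsk(\valpha\times\vbeta)$; so only the ``forward'' halves of the three comparisons remain.

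For the forward halves of the first two comparisons I would bound, for any $\valpha(E)\in\lsk(\valpha)$, its distance to a subset sum $\sum_j\lambda_j\valpha(S_j)\in\lsk(\valpha')$. Splitting $E$ across the $S_j$ and replacing $\indi{E}$ by its $|\valpha|$-average $c_j\in[0,1]$ on each piece costs at most $\eta\,|\valpha|(S)$, using the near-constancy of $\vdaa$; then an elementary rounding estimate of Steinitz type rounds the $c_j$ to $\{0,1\}$ at a cost of a dimensional constant times $\max_j\|\valpha(S_j)\|_1\le\max_j|\valpha|(S_j)<\delta$. Choosing $\eta,\delta$ small makes the total $<\veps$, the atomic contribution landing in $\lsk(\valpha'_{\mathrm{at}})$ exactly.

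For the product comparison I would expand $\valpha\times\vbeta$ by distributivity (\autoref{prp:measuredistrb}) into the four cross terms $\valpha_\sigma\times\vbeta_\tau$, $\sigma,\tau\in\{\mathrm{na},\mathrm{at}\}$, so that $\lsk(\valpha\times\vbeta)$ is the Minkowski sum of the four corresponding skeletons (the identity $\lsk(\cdot\oplus\cdot)=\lsk(\cdot)+\lsk(\cdot)$ from the proof of \autoref{prp:ldsum}), and likewise in the primed case; since Hausdorff distance is subadditive under Minkowski sums it suffices to treat each term. The $\mathrm{at}\times\mathrm{at}$ term is exact. For a mixed term such as $\valpha_{\mathrm{na}}\times\vbeta_{\mathrm{at}}$, a short computation from \autoref{prp:fubiniwithatoms} shows $\lsk(\valpha_{\mathrm{na}}\times\vbeta_{\mathrm{at}})$ is the Minkowski sum, over the atoms $B_k$, of the images of $\lsk(\valpha_{\mathrm{na}})$ under the coordinatewise scaling $\vv\mapsto\cwp(\vv,\vbeta(B_k))$; this depends on $\vbeta_{\mathrm{at}}$ only through its atom values, so replacing $\vbeta_{\mathrm{at}}$ by $\vbeta'_{\mathrm{at}}$ changes nothing, and the residual comparison of $\lsk(\valpha_{\mathrm{na}})$ with $\lsk(\valpha'_{\mathrm{na}})$, rescaled by the $\vbeta(B_k)$ (whose $\|\cdot\|_\infty$-norms sum to at most the atomic $|\vbeta|$-mass), is controlled by the previous paragraph.

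The one term not reached by \autoref{prp:discreteskeletonproduct} as stated is $\valpha_{\mathrm{na}}\times\vbeta_{\mathrm{na}}$, where neither underlying space is countable; \textbf{this is the main obstacle.} I would route through $\valpha'_{\mathrm{na}}\times\vbeta_{\mathrm{na}}$. The second step, passing from $\vbeta_{\mathrm{na}}$ to $\vbeta'_{\mathrm{na}}$ with the now-\emph{finite} first factor $\valpha'_{\mathrm{na}}$ fixed, is exactly the regime of the remark following \autoref{prp:discreteskeletonproduct} ($S'$ countable). The first step, replacing the general non-atomic $\valpha_{\mathrm{na}}$ by the finite $\valpha'_{\mathrm{na}}$ while $\vbeta_{\mathrm{na}}$ ranges over a non-countable space, is not literally covered: the construction in \autoref{prp:discreteskeletonproduct} builds the approximating $h'$ fibrewise over the second space and invokes countability only to make $h'$ jointly measurable. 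I would supply that measurability by a selection argument — the fibre map $t\mapsto\int_S h(\cdot,t)\,\di\valpha_{\mathrm{na}}$ is $\G$-measurable by \autoref{lem:fubini} and lands in $\lsk(\valpha_{\mathrm{na}})$, and for each $t$ there are only \emph{finitely many} candidate subsets $J\subseteq\{1,\dots,N\}$ with $\|\cdot-\valpha'_{\mathrm{na}}(J)\|_1<\delta$, so selecting the first admissible $J$ in a fixed enumeration gives a measurable $t\mapsto J_t$ and hence a measurable $h'$. The Fubini estimate of \autoref{prp:discreteskeletonproduct} (using $\|\vdbb\|_1=1$ and $|\vbeta_{\mathrm{na}}|(T)\le 2nM$) then yields the forward bound $2nM\delta$, the reverse being free by the pushforward remark. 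Summing the four terms and absorbing constants into $\eta,\delta$ finishes; the finiteness of $N$, which is what renders the selection step elementary, is the crux of the whole argument.
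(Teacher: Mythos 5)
Your proposal is correct, and it reaches the result by a genuinely different execution, even though the outer skeleton — decompose into non-atomic and purely atomic parts, quantize the sphere of density directions via \autoref{prp:aboutvdaa}, keep atom values exactly, and control the mixed terms with \autoref{prp:fubiniwithatoms} — coincides with the paper's. The paper's discrete approximants are \emph{not} pushforwards: each sphere cell $S_k$ is replaced by $N$ points, each carrying the quantized value $|\valpha|(S_k)\vu_k/N$, so that rounding the cell-pair average $r_{p,q}N^2$ to an integer costs at most one point's worth of mass; this $N$-fold splitting is what replaces your Steinitz-type rounding (your rounding step is correct and elementary — take a vertex of $\{x\in[0,1]^N:\sum_j x_j\valpha(S_j)=\sum_j c_j\valpha(S_j)\}$, which has at most $n$ fractional coordinates — but it is an extra lemma you would need to write out, since the paper nowhere needs it). The price the paper pays for not using pushforwards is that \emph{both} halves of every Hausdorff comparison must be proved explicitly; its backward half (from $h'$ to $h$) invokes \autoref{lem:nonatomicvmisconvex} with a square-root trick, choosing $|\valpha|(S''_{p,q})=(r'_{p,q})^{1/2}|\valpha|(S_p)$ and $|\vbeta|(T''_{p,q})=(r'_{p,q})^{1/2}|\vbeta|(T_q)$ so that the rectangle $S''_{p,q}\times T''_{p,q}$ has relative mass exactly $r'_{p,q}$; your pushforward choice erases that entire half of the argument, which is a real simplification. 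For the non-atomic-times-non-atomic term the paper builds $h'$ directly on the two-dimensional grid from the cell-pair averages (no measurability issue arises because the grid is finite), whereas you route through $\valpha'_{\mathrm{na}}\times\vbeta_{\mathrm{na}}$, reuse the remark following \autoref{prp:discreteskeletonproduct}, and patch the uncovered case with a measurable selection; that selection argument is sound — finitely many candidate subsets $J\subseteq[N]$, each cut out by a measurable condition on the fibre map $t\mapsto\int_S h(s,t)\,\valpha(\di s)$, first-admissible choice — and you correctly identify that joint measurability of the fibrewise $h'$ is precisely what countability buys in the paper's proof. In sum: your route is more modular and roughly halves the work, at the cost of two imported ingredients (the rounding lemma and the selection lemma), while the paper's route is longer but entirely self-contained.
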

\begin{proof}
    Without loss of generality we assume $|\valpha|(S)|\vbeta|(T) > 0$,
    for other cases can be trivially solved once the general construction is clear.

    We firstly discuss the case where $\valpha$ and $\vbeta$ are both non-atomic.
    Fix $\veps_0 > 0$ such that $\veps_0 < \veps$.
    Let $0 < \delta < \veps_0/4|\valpha|(S)|\vbeta|(T)$.
    One can decompose $\Unit$ into a union of $K$ disjoint measurable sets $U_1$, $\ldots$, $U_K$ where
    $$
    \|\vx - \vy\|_1 < \delta
    $$
    for all $\vx$, $\vy \in U_k$, $1 \leq k \leq K$.
    Let
    $$
    S_k = (\vdaa)^{-1}(U_k), \quad T_k = (\vdbb)^{-1}(U_k)
    $$
    for all $k \in [K]$. Then
    $$
    |\valpha|(S \backslash \bigcup_{k = 1}^K S_k) = 0, \quad |\vbeta|(T \backslash \bigcup_{k = 1}^K T_k) = 0
    $$
    by \autoref{prp:aboutvdaa}, so that we can assume
    $$
    S = \bigcup_{k = 1}^K S_k, \quad T = \bigcup_{k = 1}^K T_k
    $$
    when calculating integrals.
    Fix $\vu_k \in U_k$ for each $k \in [K]$.
    Simple calculation shows that \phantom{aaaaaaaaaaaaaaaaaa}
    \begin{equation}\label{eqn:cwpveps}
        \|\cwp(\vdaa(s), \vdbb(t)) - \cwp(\vu_p, \vu_q)\|_1 < 2\delta
    \end{equation}
    for all $s \in S_p$, $t \in T_q$, $p$, $q \in [K]$.
    We also assume without loss of generality that $|\valpha|(S_k) > 0$, $|\vbeta|(T_k) > 0$ for $k \in [K]$.

    Let $N$ be a positive integer such that $N^2 > 2n|\valpha|(S)|\vbeta|(T)/\veps_0$.
    Let both $S'$, $T'$ be the finite set $[K] \times [N]$ and let both $\F'$, $\G'$ be the discrete $\sigma$-algebra on $[K] \times [N]$.
    Let $\valpha'$, $\vbeta'$ be defined by
    \begin{align*}
        \valpha'(\{(k, m)\}) & = \va_k \coloneqq |\valpha|(S_k)\vu_k/N, \\
        \vbeta'(\{(k, m)\}) & = \vb_k \coloneqq |\vbeta|(T_k)\vu_k/N
    \end{align*}
    for all $m \in [N]$, $k \in [K]$.

    Consider an arbitrary measurable function $h: S \times T \to \{0, 1\}$.
    We show there exists $h': S' \times T' \to \{0, 1\}$ (which is measurable since $S' \times T'$ is finite)
    such that \phantom{aaaaaaaaaaaaaaaaaaaaaaa}
    $$
    \bigg\|\int_{S \times T} h \,\di(\valpha \times \vbeta) - \int_{S' \times T'} h' \,\di(\valpha' \times \vbeta')\bigg\|_1 < \veps.
    $$
    For this, let \phantom{aaaaaaaaaaaaaaaaaaaaaaaaaaaaaaaa}
    $$
    r_{p, q} = \frac{\int_{S_p \times T_q} h \,\di(|\valpha| \times |\vbeta|)}{|\valpha|(S_p) |\vbeta|(T_q)} \in [0, 1]
    $$
    and fix $h'$ to be such that
    $$
    \sum_{i, j \in [N]} h'((p, i), (q, j)) \leq r_{p, q}N^2 < \sum_{i, j \in [N]} h'((p, i), (q, j)) + 1
    $$
    for all $p$, $q \in [K]$. Then on one hand
    \begin{align*}
        & \quad \ \bigg\|r_{p, q}N^2(\cwp(\va_p, \vb_q))
            - \int_{(\{p\} \times [N]) \times (\{q\} \times [N])} h' \,\di(\valpha' \times \vbeta')\bigg\|_1 \\
        & = \bigg\|\bigg(r_{p, q}N^2 - \sum_{i, j \in [N]} h'((p, i), (q, j))\bigg)(\cwp(\va_p, \vb_q))\bigg\|_1 \\
        & \leq \|\cwp(\va_p, \vb_q)\|_1 = \frac{|\valpha|(S_p)|\vbeta|(T_q)}{N^2}\|\cwp(\vu_p, \vu_q)\|_1 \\
        & \leq \frac{n}{N^2}|\valpha|(S_p)|\vbeta|(T_q)
    \end{align*}
    for all $p$, $q \in K$. On the other hand
    \begin{align*}
        & \quad \ \bigg\|\int_{S_p \times T_q} h \,\di(\valpha \times \vbeta) - r_{p, q}N^2(\cwp(\va_p, \vb_q))\bigg\|_1 \\
        & = \bigg\|\int_{S_p \times T_q} h \,\di(\valpha \times \vbeta)
            - \bigg(\int_{S_p \times T_q} h \,\di(|\valpha| \times |\vbeta|)\bigg) (\cwp(\vu_p, \vu_q))\bigg\|_1 \\
        & = \bigg\|\int_{S_p \times T_q} h (\cwp(\vdaa(s), \vdbb(t)) - \cwp(\vu_p, \vu_q)) \,(|\valpha| \times |\vbeta|)(\di(s, t))\bigg\|_1 \\
        & \leq \int_{S_p \times T_q} |h| \|\cwp(\vdaa(s), \vdbb(t)) - \cwp(\vu_p, \vu_q)\|_1 \,(|\valpha| \times |\vbeta|)(\di(s, t)) \\
        & \leq 2\delta |\valpha|(S_p)|\vbeta|(T_q)
    \end{align*}
    for all $p$, $q \in K$,
    where the second equality follows by \autoref{prp:signedprodvecmeasureDmultid} and linearity of integrals,
    and where the second inequality follows by \eqref{eqn:cwpveps}.
    It follows that
    \begin{align*}
        & \bigg\|\int_{S \times T} h \,\di(\valpha \times \vbeta) - \int_{S' \times T'} h' \,\di(\valpha' \times \vbeta')\bigg\|_1 \\
        & \qquad \leq \sum_{p, q \in [K]}\Big(\frac{n}{N^2} + 2\delta\Big)|\valpha|(S_p)|\vbeta|(T_q)
            = \Big(\frac{n}{N^2} + 2\delta\Big)|\valpha|(S)|\vbeta|(T) < \veps_0.
    \end{align*}
    This means for any $\vx \in \lsk(\valpha \times \vbeta)$ there exists
    $\vx' \in \lsk(\valpha' \times \vbeta')$ such that $\|\vx - \vx'\|_1 < \veps_0$.

    To show the other direction, consider an arbitrary function $h': S' \times T' \to \{0, 1\}$,
    and let \phantom{aaaaaaaaaaaaaaaaaaaa}
    $$
    r'_{p, q} = \frac{1}{N^2}\sum_{i, j \in [N]} h'((p, i), (q, j)) \in [0, 1]
    $$
    for all $p$, $q \in [K]$.
    Since $\valpha$, $\vbeta$ are non-atomic, $|\valpha|$, $|\vbeta|$ are non-atomic by (ii) in \autoref{prp:atomsofvecmeasures},
    and one can fix subsets $S''_{p, q} \in \F$, $T''_{p, q} \in \G$ of $S_p$, $T_q$, respectively,
    such that
    $$
    |\valpha|(S''_{p, q}) = (r'_{p, q})^{\frac{1}{2}}|\valpha|(S_p), \quad |\vbeta|(T''_{p, q}) = (r'_{p, q})^{\frac{1}{2}}|\vbeta|(T_q)
    $$
    for each $p$, $q \in [K]$ by \autoref{lem:nonatomicvmisconvex}.
    Let $h: S \times T \to \{0, 1\}$ be defined by
    $$
    h(s, t) = \indi{S''_{p, q} \times T''_{p, q}}(s, t)
    $$
    for all $(s, t) \in S_p \times T_q$, $p$, $q \in [K]$.
    Then
    \begin{align*}
        & \quad \ \bigg\|\int_{S_p \times T_q} h \,\di(\valpha \times \vbeta)
            - \int_{(\{p\} \times [N]) \times (\{q\} \times [N])} h' \,\di(\valpha' \times \vbeta')\bigg\|_1 \\
        & = \bigg\|\int_{S_p \times T_q} \indi{S''_{p, q} \times T''_{p, q}} \,\di(\valpha \times \vbeta)
            - r'_{p, q}N^2(\cwp(\va_p, \vb_q))\bigg\|_1 \\
        & = \bigg\|\int_{S''_{p, q} \times T''_{p, q}} \,\di(\valpha \times \vbeta)
            - |\valpha|(S''_{p, q})|\vbeta|(T''_{p, q})(\cwp(\vu_p, \vu_q))\bigg\|_1 \\
        & = \bigg\|\int_{S''_{p, q} \times T''_{p, q}} (\cwp(\vdaa(s), \vdbb(t))
            - \cwp(\vu_p, \vu_q)) \,(|\valpha| \times |\vbeta|)(\di(s, t))\bigg\|_1\\
        & \leq \int_{S''_{p, q} \times T''_{p, q}} \|\cwp(\vdaa(s), \vdbb(t)) - \cwp(\vu_p, \vu_q)\|_1 \,(|\valpha| \times |\vbeta|)(\di(s, t)) \\
        & \leq 2\delta |\valpha|(S''_{p, q})|\vbeta|(T''_{p, q}) \leq 2\delta |\valpha|(S_p)|\vbeta|(T_q)
    \end{align*}
    for all $p$, $q \in K$,
    where the third equality follows by \autoref{prp:signedprodvecmeasureDmultid} and linearity of integrals,
    and where the second inequality follows by \eqref{eqn:cwpveps}.
    It follows that
    \begin{align*}
        & \bigg\|\int_{S \times T} h \,\di(\valpha \times \vbeta) - \int_{S' \times T'} h' \,\di(\valpha' \times \vbeta')\bigg\|_1 \\
        & \qquad \qquad \qquad \qquad \leq \sum_{p, q \in [K]} 2\delta|\valpha|(S_p)|\vbeta|(T_q)
            = 2\delta|\valpha|(S)|\vbeta|(T) < \veps_0.
    \end{align*}
    This means for any $\vx' \in \lsk(\valpha' \times \vbeta')$ there exists
    $\vx \in \lsk(\valpha \times \vbeta)$ such that $\|\vx - \vx'\|_1 < \veps_0$.
    It finally follows that
    $$
    \dhaus\big(\lsk(\valpha \times \vbeta), \lsk(\valpha' \times \vbeta')\big) \leq \veps_0 < \veps
    $$
    for our construction of $\valpha'$ and $\vbeta'$.
    The arguments for showing
    $$
    \dhaus\big(\lsk(\valpha), \lsk(\valpha')\big) < \veps, \quad \dhaus\big(\lsk(\vbeta), \lsk(\vbeta')\big) < \veps
    $$
    are similar and simpler, in which we need to let
    $$
    0 < \delta < \min\{\veps_0/4|\valpha|(S)|\vbeta|(T), \veps_0/4|\valpha|(S), \veps_0/4\vbeta|(T)\}
    $$
    and \phantom{aaaaaaaaaaaaaaaaaaaaaaaaaaa}
    $$
    N > \max\{(2n|\valpha|(S)|\vbeta|(T)/\veps_0)^{\frac{1}{2}}, 2n|\valpha|(S)/\veps_0, 2n|\vbeta|(T)/\veps_0\}
    $$
    replace the existing requirement of $\delta$ and $N$.

    Calling the above the first part of our proof,
    we now start to consider the most general case.
    By the discussion below \autoref{prp:atomsofvecmeasures},
    $S$ can be decomposed as union of disjoint subsets $\Scon$, $\Satm$ where the restriction of $\valpha$ to $\Scon$ is non-atomic
    and where $\Satm$ is union of at most countably many mutually disjoint atoms $A_1, A_2, \ldots$ of $|\valpha|$.
    Similarly $T$ can be decomposed as union of disjoint subsets $\Tcon$, $\Tatm$ where the restriction of $\vbeta$ to $\Tcon$ is non-atomic
    and where $\Tatm$ is union of at most countably many mutually disjoint atoms $B_1, B_2, \ldots$ of $|\vbeta|$.
    Let $A_a = \emp$ for $a > N_1$ if $\Satm$ contains $N_1 < \infty$ atoms of $|\valpha|$
    and let $B_b = \emp$ for $b > N_2$ if $\Tatm$ contains $N_2 < \infty$ atoms of $|\vbeta|$.
    Let $\valpha_1$, $\valpha_2$ be the respective restrictions of $\valpha$ to $\Scon$, $\Satm$,
    and let $\vbeta_1$, $\vbeta_2$ be the respective restrictions of $\vbeta$ to $\Tcon$, $\Tatm$.

    Let $\valpha'_1$, $\vbeta'_1$ on the discrete $\sigma$-algebra on $\Sconp = \Tconp = [K] \times [N]$
    be defined respectively for $\valpha_1$, $\vbeta_1$
    as $\valpha'$, $\vbeta'$ be defined respectively for $\valpha$, $\vbeta$ in the first part of our proof
    with additional requirements that $\veps_0 < \veps/3$, $N > 2n|\valpha|(S)|\vbeta|(T)/\veps_0$.
    Let $\valpha'_2$, $\vbeta'_2$ on the discrete $\sigma$-algebra on $\Satmp = \Tatmp = \nn$ be defined by
    $$
    \valpha'_2(\{a\}) = \valpha(A_a), \quad \vbeta'_2(\{b\}) = \vbeta(B_b)
    $$
    for $a$, $b \in \nn$.
    Finally, let $\valpha' = \valpha'_1 \oplus \valpha'_2$, $\vbeta' = \vbeta'_1 \oplus \vbeta'_2$.

    As the usual, firstly consider an arbitrary measurable function $h: S \times T \to \{0, 1\}$,
    and we claim to find $h': S' \times T' \to \{0, 1\}$ such that
    \begin{equation}\label{eqn:hpforh0}
        \bigg\|\int_{S \times T} h \,\di(\valpha \times \vbeta) - \int_{S' \times T'} h' \,\di(\valpha' \times \vbeta')\bigg\|_1 < 3\veps_0,
    \end{equation}
    where $h'$ is automatically measurable since $S' \times T'$ is at most countable.
    The first part of our proof has already shown that one can make the restriction of $h'$ to $\Sconp \times \Tconp$ to be such that
    \begin{equation}\label{eqn:hpforh1}
        \bigg\|\int_{\Scon \times \Tcon} h \,\di(\valpha \times \vbeta) - \int_{\Sconp \times \Tconp} h' \,\di(\valpha' \times \vbeta')\bigg\|_1 < \veps_0.
    \end{equation}
    By \autoref{prp:fubiniwithatoms}, there exists $t_{p, b} \in B_b$ such that
    $$
    \int_{S_p \times B_b} h \,\di(\valpha \times \vbeta) = \cwp\bigg(\int_{S_p} h(s, t_{p, b}) \,\valpha(\di s), \vbeta(B_b)\bigg)
    $$
    for all $p \in [K]$, $b \in \nn$ such that $B_b \neq \emp$. Let
    $$
    \ell_{p, b} = \frac{\int_{S_p} h(s, t_{p, b}) \,|\valpha|(\di s)}{|\valpha|(S_p)} \in [0, 1]
    $$
    and let the restriction of $h'$ on $\Sconp \times \Tatmp$ be such that
    $$
    \sum_{i \in [N]} h'((p, i), b) \leq \ell_{p, b}N < \sum_{i \in [N]} h'((p, i), b) + 1
    $$
    for all $p \in [K]$, $b \in \nn$ such that $B_b \neq \emp$,
    and such that $h'((p, i), b) = 0$ for all $(p, i) \in [K] \times [N]$, $b \in \nn$ such that $B_b = \emp$. Then
    \begin{align*}
        & \quad \ \bigg\|\int_{S_p \times B_b} h \,\di(\valpha \times \vbeta)
            - \int_{(\{p\} \times [N]) \times \{b\}} h' \,\di(\valpha' \times \vbeta')\bigg\|_1 \\
        & \leq \bigg\|\cwp\bigg(\int_{S_p} h(s, t_{p, b}) \,\valpha(\di s), \vbeta(B_b)\bigg) - \ell_{p, b}N\cwp(\va_p, \vbeta(B_b))\bigg\|_1 \\
        & \qquad \qquad + \bigg\|\ell_{p, b}N\cwp(\va_p, \vbeta(B_b)) - \sum_{i \in [N]} h'((p, i), b) \cwp(\va_p, \vbeta'(\{b\}))\bigg\|_1 \\
        & \leq \Big(\delta + \frac{n}{N}\Big)|\valpha|(S_p)|\vbeta|(B_b)
    \end{align*}
    for all $p \in [K]$, $b \in \nn$ such that $B_b \neq \emp$ by calculations similar as in the first part of our proof,
    and
    \begin{align}
        & \bigg\|\int_{\Scon \times \Tatm} h \,\di(\valpha \times \vbeta)
            - \int_{\Sconp \times \Tatmp} h' \,\di(\valpha' \times \vbeta')\bigg\|_1 \nonumber \\
        & \qquad \qquad \qquad \qquad \leq \sum_{b = 1}^{\infty}\sum_{p \in [K]} \Big(\delta + \frac{n}{N}\Big)|\valpha|(S_p)|\vbeta|(B_b) \nonumber \\
        & \qquad \qquad \qquad \qquad = \Big(\delta + \frac{n}{N}\Big)|\valpha|(\Scon)|\vbeta|(\Tatm) < \veps_0. \label{eqn:hpforh2}
    \end{align}
    A similar process shows that on can set the restriction of $h'$ on $\Satmp \times \Tconp$ to be such that \phantom{aaaaaaaaaaaa}
    \begin{equation}\label{eqn:hpforh3}
        \bigg\|\int_{\Satm \times \Tcon} h \,\di(\valpha \times \vbeta)
            - \int_{\Satmp \times \Tconp} h' \,\di(\valpha' \times \vbeta')\bigg\|_1 < \veps_0.
    \end{equation}
    At last, for each $a$, $b \in \nn$ such that $A_a$, $B_b \neq \emp$,
    their exist $s^{a, b} \in A_a$, $t^{a, b} \in B_b$ such that \phantom{aaaaaaaaaaaaaa}
    $$
    \int_{A_a \times B_b} h \,\di(\valpha \times \vbeta) = h(s^{a, b}, t^{a, b})\big(\cwp(\valpha(A_a), \vbeta(B_b))\big)
    $$
    by \autoref{prp:fubiniwithatoms}.
    Let the restriction of $h'$ on $\Satmp \times \Tatmp$ be such that $h'(a, b) = h(s^{a, b}, t^{a, b})$. Then
    \begin{align*}
        & \int_{A_a \times B_b} h \,\di(\valpha \times \vbeta) - \int_{\{a\} \times \{b\}} h' \,\di(\valpha' \times \vbeta') \\
        & \qquad \qquad \qquad = h(s^{a, b}, t^{a, b}) \big(\cwp(\valpha(A_a), \vbeta(B_b))\big) \\
        & \qquad \qquad \qquad \qquad \qquad \qquad - h'(a, b) \big(\cwp(\valpha'(\{a\}), \vbeta'(\{b\}))\big) \\
        & \qquad \qquad \qquad = 0
    \end{align*}
    for all $a$, $b \in \nn$ such that $A_a$, $B_b \neq \emp$, so that
    \begin{equation}\label{eqn:hpforh4}
        \bigg\|\int_{\Satm \times \Tatm} h \,\di(\valpha \times \vbeta)
            - \int_{\Satmp \times \Tatmp} h' \,\di(\valpha' \times \vbeta')\bigg\|_1 = 0.
    \end{equation}
    The inequality \eqref{eqn:hpforh0} then follows by \eqref{eqn:hpforh1}, \eqref{eqn:hpforh2}, \eqref{eqn:hpforh3} and \eqref{eqn:hpforh4}.

    The other direction is again easier. Consider an arbitrary $h': S' \times T' \to \{0, 1\}$.
    We claim to construct a measurable $h: S \times T \to \{0, 1\}$ such that
    \begin{equation}\label{eqn:hforhp0}
        \bigg\|\int_{S \times T} h \,\di(\valpha \times \vbeta) - \int_{S' \times T'} h' \,\di(\valpha' \times \vbeta')\bigg\|_1 < 3\veps_0.
    \end{equation}
    Again, the first part of our proof has shown that one can make the restriction of $h$ to $\Scon \times \Tcon$ to be such that
    \begin{equation}\label{eqn:hforhp1}
        \bigg\|\int_{\Scon \times \Tcon} h \,\di(\valpha \times \vbeta) - \int_{\Sconp \times \Tconp} h' \,\di(\valpha' \times \vbeta')\bigg\|_1 < \veps_0.
    \end{equation}
    Let \phantom{aaaaaaaaaaaaaaaaaaaa}
    $$
    \ell'_{p, b} = \frac{1}{N}\sum_{i \in [N]} h'((p, i), b) \in [0, 1]
    $$
    for all $p \in [K]$, $b \in \nn$ such that $B_b \neq \emp$.
    Since $\valpha$ is non-atomic, one can fix a subset $S'''_{p, b} \in \F$ of $S_p$
    such that
    $$
    |\valpha|(S'''_{p, b}) = \ell'_{p, b}|\valpha|(S_p)
    $$
    for each $p \in [K]$, $b \in \nn$ such that $B_b \neq \emp$ by \autoref{lem:nonatomicvmisconvex}.
    Let $h: \Scon \times \Tatm \to \{0, 1\}$ be defined by \phantom{aaaaaaaaaaaaaaaaaaaaaa}
    $$
    h(s, t) = \indi{S'''_{p, b}}(s)
    $$
    for all $(s, t) \in S_p \times B_b$, $p \in [K]$, $b \in \nn$ such that $B_b \neq \emp$.
    Then
    \begin{align*}
        & \quad \ \bigg\|\int_{S_p \times B_b} h \,\di(\valpha \times \vbeta)
            - \int_{(\{p\} \times [N]) \times \{b\}} h' \,\di(\valpha' \times \vbeta')\bigg\|_1 \\
        & = \bigg\|\int_{S_p \times B_b} \indi{S'''_{p, b} \times B_b} \,\di(\valpha \times \vbeta)
            - \ell'_{p, b}N\big(\cwp(\va_p, \vbeta'(\{b\}))\big)\bigg\|_1 \\
        & = \bigg\|\cwp(\valpha(S'''_{p, b}), \vbeta(B_b))
            - |\valpha|(S'''_{p, b})\big(\cwp(\vu_p, \vbeta(B_b))\big)\bigg\|_1 \\
        & = |\vbeta|(B_b)\bigg\|\cwp\bigg(\int_{S'''_{p, b}} (\vdaa - \vu_p) \,\di|\valpha|, \vv_b\bigg)\bigg\|_1 \\
        & \leq |\vbeta|(B_b)\int_{S'''_{p, b}} \|\vdaa - \vu_p\|_1 \,\di|\valpha| \\
        & \leq \delta|\valpha|(S_p)|\vbeta|(B_b)
    \end{align*}
    for all $p \in K$, $b \in \nn$ such that $B_b \neq \emp$,
    where $\vv_b \coloneqq \vbeta(B_b)/|\vbeta|(B_b) \in \Unit$.
    It follows that \phantom{aaaaa}
    \begin{align}
        & \bigg\|\int_{\Scon \times \Tatm} h \,\di(\valpha \times \vbeta)
            - \int_{\Sconp \times \Tatmp} h' \,\di(\valpha' \times \vbeta')\bigg\|_1 \nonumber \\
        & \qquad \qquad \qquad \qquad \qquad \leq \sum_{b = 1}^{\infty}\sum_{p \in [K]} \delta|\valpha|(S_p)|\vbeta|(B_b) \nonumber \\
        & \qquad \qquad \qquad \qquad \qquad = \delta|\valpha|(\Scon)|\vbeta|(\Tatm) < \veps_0. \label{eqn:hforhp2}
    \end{align}
    A similar process shows that on can set the restriction of $h$ on $\Satm \times \Tcon$ to be such that \phantom{aaaaaaaaaaaa}
    \begin{equation}\label{eqn:hforhp3}
        \bigg\|\int_{\Satm \times \Tcon} h \,\di(\valpha \times \vbeta)
            - \int_{\Satmp \times \Tconp} h' \,\di(\valpha' \times \vbeta')\bigg\|_1 < \veps_0.
    \end{equation}
    Lastly, for each $a$, $b \in \nn$ such that $A_a$, $B_b \neq \emp$,
    let the restriction of $h$ on $\Satm \times \Tatm$ be such that \phantom{aaaaaaaaaaaaaa}
    $$
    h(s, t) = h'(a, b)
    $$
    for all $(s, t) \in A_a \times B_b$, $a$, $b \in \nn$, $A_a$, $B_b \neq \emp$.
    Then it is easy to check that
    \begin{equation}\label{eqn:hforhp4}
        \bigg\|\int_{\Satm \times \Tatm} h \,\di(\valpha \times \vbeta)
            - \int_{\Satmp \times \Tatmp} h' \,\di(\valpha' \times \vbeta')\bigg\|_1 = 0.
    \end{equation}
    The inequality \eqref{eqn:hforhp0} then follows by \eqref{eqn:hforhp1}, \eqref{eqn:hforhp2}, \eqref{eqn:hforhp3} and \eqref{eqn:hforhp4}.

    Finally, by \eqref{eqn:hpforh0} and \eqref{eqn:hforhp0},
    $$
    \dhaus\big(\lsk(\valpha \times \vbeta), \lsk(\valpha' \times \vbeta')\big) \leq 3\veps_0 < \veps
    $$
    for our construction of $\valpha'$ and $\vbeta'$.
    The arguments for showing
    $$
    \dhaus\big(\lsk(\valpha), \lsk(\valpha')\big) < \veps, \quad \dhaus\big(\lsk(\vbeta), \lsk(\vbeta')\big) < \veps
    $$
    are again similar and simpler.
\end{proof}

\begin{proposition}\label{prp:continuityofskeletonprod}
    Let $\valpha$, $\vbeta$ be $n$-dimensional finite signed measures on $(S, \F)$, $(T, \G)$, respectively.
    Then for any $\veps > 0$ there exists $\delta > 0$ such that
    $$
    \dhaus\big(\lsk(\valpha \times \vbeta), \lsk(\valpha' \times \vbeta')\big) < \veps
    $$
    for any $n$-dimensional finite signed measures $\valpha'$, $\vbeta'$ for which
    $$
    \dhaus\big(\lsk(\valpha), \lsk(\valpha')\big) < \delta, \quad \dhaus\big(\lsk(\vbeta), \lsk(\vbeta')\big) < \delta.
    $$
\end{proposition}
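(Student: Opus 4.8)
The plan is to deduce the general continuity statement from the two special results already established: the fully discrete continuity bound of \autoref{prp:discreteskeletonproduct} and the discrete approximation of \autoref{prp:lskproddiscreteapprox}. Since every Lorenz skeleton is compact, $\dhaus$ is a genuine metric on the sets in question and in particular obeys the triangle inequality, which will be the glue throughout. First I would fix $\valpha$, $\vbeta$ and $\veps > 0$, pick $M_0$ with $\lsk(\valpha), \lsk(\vbeta) \subseteq \cube(M_0)$, and agree once and for all to consider only $\valpha'$, $\vbeta'$ with $\delta \le 1$. Then every skeleton that arises below—including those of the discrete approximants, each of which lies within Hausdorff distance at most $1$ of an original—is contained in $\cube(M')$ with $M' := M_0 + 2$. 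The point is that $M'$ is pinned down \emph{before} any fineness parameter is chosen.

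Next I would insert discrete approximants on both sides. Applying \autoref{prp:lskproddiscreteapprox} to the pair $(\valpha, \vbeta)$ yields countable discrete $\valpha_0$, $\vbeta_0$ with $\dhaus(\lsk(\valpha),\lsk(\valpha_0))$, $\dhaus(\lsk(\vbeta),\lsk(\vbeta_0))$ and the product distance $\dhaus(\lsk(\valpha\times\vbeta),\lsk(\valpha_0\times\vbeta_0))$ all below a fineness $\eta \le 1$ to be fixed below; applying it to the arbitrary perturbed pair $(\valpha',\vbeta')$ yields countable discrete $\valpha_0'$, $\vbeta_0'$ with the same three distances below $\eta$. Triangle inequalities then do the work: on the input side $\dhaus(\lsk(\valpha_0),\lsk(\valpha_0')) < \eta + \delta + \eta$ and likewise for $\vbeta$, while on the product side
\begin{align*}
\dhaus\big(\lsk(\valpha\times\vbeta),\lsk(\valpha'\times\vbeta')\big)
&\le \dhaus\big(\lsk(\valpha\times\vbeta),\lsk(\valpha_0\times\vbeta_0)\big) \\
&\quad + \dhaus\big(\lsk(\valpha_0\times\vbeta_0),\lsk(\valpha_0'\times\vbeta_0')\big) \\
&\quad + \dhaus\big(\lsk(\valpha_0'\times\vbeta_0'),\lsk(\valpha'\times\vbeta')\big).
\end{align*}
The outer two terms are below $\eta$ by construction, and the middle term is bounded, via \autoref{prp:discreteskeletonproduct} applied with cube radius $M'$ to the four countable discrete measures $\valpha_0,\valpha_0',\vbeta_0,\vbeta_0'$, by $4nM'$ times the larger of the two input distances, hence by $4nM'(2\eta + \delta)$. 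The total is at most $(2 + 8nM')\eta + 4nM'\delta$, so I would fix the parameters in the forced order: first $\eta \le 1$ so small that $(2 + 8nM')\eta < \veps/2$, and only afterwards $\delta \le 1$ so small that $4nM'\delta < \veps/2$, giving the claim.

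The main obstacle—and essentially the only delicate point—is exactly this amplification of the discretization error by the factor $4nM'$ coming from \autoref{prp:discreteskeletonproduct}: the approximation error $\eta$ enters the final estimate not only additively (through the two product-approximation terms) but also multiplied by $4nM'$ (through the amplified input distances $2\eta + \delta$). The argument closes only because $M'$ can be fixed independently of $\eta$ and $\delta$ once those are capped at $1$, so that $\eta$ may be chosen small \emph{relative to} the already-fixed $M'$; reversing this order would be circular. A secondary routine check is that the hypotheses of \autoref{prp:discreteskeletonproduct} are met—all four approximants are on countable discrete spaces with skeletons in the common cube $\cube(M')$—and that the bound its proof delivers is indeed $4nM'$ times the input Hausdorff distance, as used above.
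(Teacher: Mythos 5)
Your proposal is correct and follows essentially the same route as the paper's own proof: both discretize the two pairs via \autoref{prp:lskproddiscreteapprox}, compare the resulting discrete products via the quantitative bound of \autoref{prp:discreteskeletonproduct} over a cube fixed in advance, and chain everything together with triangle inequalities for the Hausdorff distance. The only difference is bookkeeping---you carry separate parameters $\eta$ (discretization fineness) and $\delta$ (perturbation size) and fix the cube as $\cube(M_0+2)$, whereas the paper reuses a single $\delta < \veps/36nM$ for both roles---which if anything makes your containment-in-$\cube(M')$ check slightly more careful than the paper's.
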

\begin{proof}
    Let $M > 1$ be large enough so that $\cube(M)$ includes all sets $A$, $B \subseteq \rr^n$
    such that \phantom{aaaaaaaaaaaaaaaaaaaaaaaa}
    $$
    \dhaus\big(\lsk(\valpha), A\big) < 1, \quad \dhaus\big(\lsk(\vbeta), B\big) < 1.
    $$
    Choose $\delta < \veps/36nM < \veps/3$ for $\delta \in (0, 1)$ and let $\valpha'$, $\vbeta'$ be arbitrary
    $n$-dimensional finite signed measures such that
    $$
    \dhaus\big(\lsk(\valpha), \lsk(\valpha')\big) < \delta, \quad \dhaus\big(\lsk(\vbeta), \lsk(\vbeta')\big) < \delta.
    $$
    By \autoref{prp:lskproddiscreteapprox} there exist
    $n$-dimensional finite signed measures $\vlambda$, $\vomega$, $\vlambda'$, $\vomega'$
    on discrete $\sigma$-algebras on countable sets such that
    $$
    \dhaus\big(\lsk(\valpha \times \vbeta), \lsk(\vlambda \times \vomega)\big) < \delta,
    $$
    $$
    \dhaus\big(\lsk(\valpha), \lsk(\vlambda)\big) < \delta, \quad \dhaus\big(\lsk(\vbeta), \lsk(\vomega)\big) < \delta
    $$
    and such that \phantom{aaaaaaaaaaaaaaaaaaaaaaaa}
    $$
    \dhaus\big(\lsk(\valpha' \times \vbeta'), \lsk(\vlambda' \times \vomega')\big) < \delta,
    $$
    $$
    \dhaus\big(\lsk(\valpha'), \lsk(\vlambda')\big) < \delta, \quad \dhaus\big(\lsk(\vbeta'), \lsk(\vomega')\big) < \delta.
    $$
    In particular,
    $$
    \dhaus\big(\lsk(\vlambda), \lsk(\vlambda')\big) < 3\delta, \quad \dhaus\big(\lsk(\vomega), \lsk(\vomega')\big) < 3\delta
    $$
    so that \phantom{aaaaaaaaaaaaaaaaaaaaaaaaaaaaaaaa}
    $$
    \dhaus\big(\lsk(\vlambda \times \vomega), \lsk(\vlambda' \times \vomega')\big) \leq 12nM\delta < \veps/3
    $$
    by \autoref{prp:discreteskeletonproduct}
    since $\lsk(\vlambda)$, $\lsk(\vomega)$, $\lsk(\vlambda')$, $\lsk(\vomega') \subseteq \cube(M)$.
    It then follows that \phantom{aaaaaaaaaaaaaaaaaaaaa}
    $$
    \dhaus\big(\lsk(\valpha \times \vbeta), \lsk(\valpha' \times \vbeta')\big) < \veps/3 + \delta + \delta < \veps
    $$
    and this concludes the proof.
\end{proof}

\noindent
Taking $\veps \to 0$ in \autoref{prp:continuityofskeletonprod},
one has $\lsk(\valpha \times \vbeta) = \lsk(\valpha' \times \vbeta')$
if $\lsk(\valpha) = \lsk(\valpha')$, $\lsk(\vbeta) = \lsk(\vbeta')$.
This implies that we can define \emph{the Lorenz product of Lorenz skeletons}
in the same way as we define the Lorenz product of Lorenz hulls.
I.e., the Lorenz product $K_1K_2$ of Lorenz skeletons $K_1$, $K_2$ is the Lorenz skeleton $\lsk(\valpha \times \vbeta)$
where $\lsk(\valpha) = K_1$, $\lsk(\vbeta) = K_2$.
Moreover, \autoref{prp:continuityofskeletonprod} also implies that the Lorenz product of Lorenz skeletons
is continuous with respect to the Hausdorff distance.

\section{Appendix: An Equivalent Definition}\label{sec:equivalences}


As a generalization of the first statement of \autoref{prp:threecases},
we have the next proposition:

\begin{proposition}\label{prp:xstarDmuintfcomparingzerosigned}
    Let $\valpha$ be an $n$-dimensional finite signed measure on $(S, \F)$,
    let $\mu$ be a $\sigma$-finite positive measure dominating $\valpha$ on $(S, \F)$,
    let $f: S \to [0, \infty)$ be a measurable function such that $\int_S f \,\di|\valpha| < \infty$, and let $A \in \F$.
    Then $\dpr{\vxs}{\int_A  f \,\di\valpha} \geq 0$, $\dpr{\vxs}{\int_A  f \,\di\valpha} \leq 0$
    if $A \subseteq \Diamu{\geq}$, $A \subseteq \Diamu{\leq}$, respectively.
\end{proposition}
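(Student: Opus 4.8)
The plan is to reduce the inner product $\dpr{\vxs}{\int_A f \,\di\valpha}$ to a single scalar Lebesgue integral whose integrand carries a definite sign on $A$, and then to invoke monotonicity of the integral. This is the natural generalization of the first statement of \autoref{prp:threecases}, which amounts to the special case $f \equiv 1$; the only new ingredient is carrying the nonnegative weight $f$ through the same sign analysis.

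First I would expand the vector-valued integral via its definition \eqref{eqn:intonvecmeasures}, writing $\int_A f \,\di\valpha = \int_A f \cdot \vdamu \,\di\mu$, where $\vdamu = \lm{\valpha}{\mu}$ and the restriction to $A$ is interpreted as integration of $\indi{A} f$ over $S$. The hypothesis $\int_S f \,\di|\valpha| < \infty$ together with $f \geq 0$ (so that $|f| = f$) ensures that this integral is well-defined in the sense of that definition. Next, since $\vxs$ is a fixed vector and the vector integral is computed coordinate-by-coordinate, linearity of the ordinary scalar Lebesgue integral gives
$$
\dpr{\vxs}{\int_A f \,\di\valpha} = \sum_{i=1}^n x_i \int_A f \cdot \lm{\alpha_i}{\mu} \,\di\mu = \int_A f(s)\,\dpr{\vxs}{\vdamu(s)} \,\mu(\di s).
$$

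Finally I would read off the sign of the integrand on $A$. If $A \subseteq \Diamu{\geq}$, then $\dpr{\vxs}{\vdamu(s)} \geq 0$ for every $s \in A$ by definition of $\Diamu{\geq}$, and since $f \geq 0$ the integrand $f(s)\,\dpr{\vxs}{\vdamu(s)}$ is nonnegative on $A$; monotonicity of the integral then yields $\dpr{\vxs}{\int_A f \,\di\valpha} \geq 0$. The case $A \subseteq \Diamu{\leq}$ is symmetric, producing a nonpositive integrand and hence $\dpr{\vxs}{\int_A f \,\di\valpha} \leq 0$.

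I do not anticipate a genuine obstacle here, as the statement is essentially a sign computation. The one point demanding a little care is the interchange of the finite sum (the inner product with the constant vector $\vxs$) and the integral in the displayed chain; this is justified by linearity of the Lebesgue integral once the integrability $\int_S f \,\di|\valpha| < \infty$ has been recorded, which also guarantees that each scalar integral appearing in the sum is finite. Beyond that, the argument is routine.
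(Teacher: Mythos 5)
Your proof is correct and follows essentially the same route as the paper's: reduce $\dpr{\vxs}{\int_A f \,\di\valpha}$ to the scalar integral $\int_A f(s)\,\dpr{\vxs}{\vdamu(s)}\,\mu(\di s)$ via the definition of integration against a signed vector measure and linearity, then observe that the integrand $f \cdot \dpr{\vxs}{\vdamu}$ has the asserted sign on $\Diamu{\geq}$ (resp.\ $\Diamu{\leq}$). The paper states this more tersely, but the ideas and their order are identical, including your care about the integrability condition justifying the linearity step.
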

\begin{proof}
    Use the fact that
    $$
    f(s) \cdot \dpr{\vxs}{\vdamu(s)} \geq 0, \quad f(s) \cdot \dpr{\vxs}{\vdamu(s)} \leq 0
    $$
    for $s \in \Diamu{\geq}$, $s \in \Diamu{\leq}$, respectively.
    The rest of the proof follows by definition of $\int_A f \,\di\valpha$ and
    by linearity of the ordinary Lebesgue integral.
\end{proof}

\begin{proposition}\label{prp:ldavhull}
    Let $\valpha$ be an $n$-dimensional finite signed measure on $(S, \F)$. Then \phantom{aaaaaaaaaaaaaaaaaaaa}
    $$
    \ld(\valpha) = \Big\{\int_S f \,\di\valpha \,:\, f: S \to [0, 1] \text{ is measurable}\Big\}.
    $$
\end{proposition}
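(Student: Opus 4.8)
The plan is to prove the set equality by establishing the two inclusions separately, writing $R := \{\int_S f \,\di\valpha : f : S \to [0,1] \text{ measurable}\}$ for the right-hand side. First I would record that every such integral is well-defined: since $0 \leq f \leq 1$ and the total variation $|\valpha|(S)$ is finite (by \autoref{lem:tvisintegralofab} applied coordinatewise, so that $|\valpha| = \sum_i |\alpha_i|$ is a finite positive measure), we have $\int_S |f| \,\di|\valpha| \leq |\valpha|(S) < \infty$, which is exactly the condition under which \eqref{eqn:intonvecmeasures} makes sense.

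For the inclusion $\ld(\valpha) \subseteq R$, I would show that $R$ is a convex set containing $\lsk(\valpha)$, whence it contains the convex hull $\ld(\valpha) = \convhull(\lsk(\valpha))$ as the smallest such set. Convexity is immediate from linearity: a convex combination $\lam \int_S f \,\di\valpha + (1-\lam)\int_S g \,\di\valpha$ equals $\int_S (\lam f + (1 - \lam)g) \,\di\valpha$, and $\lam f + (1-\lam) g$ again maps $S$ into $[0,1]$. That $\lsk(\valpha) \subseteq R$ follows by taking $f = \indi{A}$, for which $\int_S \indi{A} \,\di\valpha = \valpha(A)$.

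For the reverse inclusion $R \subseteq \ld(\valpha)$, I would invoke the reach-function machinery: since $\ld(\valpha)$ is closed and convex, by \autoref{prp:suppconju} it suffices to show, for each fixed measurable $f : S \to [0,1]$, that the singleton $\{\int_S f \,\di\valpha\}$ has reach bounded by that of $\ld(\valpha)$, i.e.\ that $\dpr{\vxs}{\int_S f \,\di\valpha} \leq \dual{\ld(\valpha)}{\vxs}$ for every $\vxs \in \rr^n$. Fixing $\vxs$ and a dominating measure $\mu$ (e.g.\ $\mu = |\valpha|$, which is finite hence $\sigma$-finite) and expanding by linearity, the left side becomes $\int_S f \cdot \dpr{\vxs}{\vdamu} \,\di\mu$. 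The key is a ``bang-bang'' estimate: splitting $S$ into $\Diamu{\geq}$ and $\Diamu{<}$, on the former $f \leq 1$ gives $f \cdot \dpr{\vxs}{\vdamu} \leq \dpr{\vxs}{\vdamu}$, while on the latter $f \geq 0$ gives $f \cdot \dpr{\vxs}{\vdamu} \leq 0$; hence $\int_S f \cdot \dpr{\vxs}{\vdamu} \,\di\mu \leq \int_{\Diamu{\geq}} \dpr{\vxs}{\vdamu} \,\di\mu = \dpr{\vxs}{\valpha(\Diamu{\geq})}$, which equals $\dual{\ld(\valpha)}{\vxs}$ by \autoref{prp:reachofld}. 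This gives the required reach inequality, so $\int_S f \,\di\valpha \in \ld(\valpha)$, and combining the two inclusions yields the claim.

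The argument is essentially routine given the earlier results; the only genuine content is recognizing that the extremal $f$ maximizing $\int_S f \cdot \dpr{\vxs}{\vdamu} \,\di\mu$ is the indicator $\indi{\Diamu{\geq}}$, which ties the supremum over $f$ back to the reach formula of \autoref{prp:reachofld}. The one point I would keep straight is the bookkeeping between the signed integral $\int_S f \,\di\valpha$ and its representation $\int_S f \cdot \vdamu \,\di\mu$ against the fixed positive dominating measure $\mu$, since the pointwise comparison of integrands is carried out in the ordinary (positive-measure) Lebesgue sense.
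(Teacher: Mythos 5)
Your proof is correct and follows essentially the same route as the paper's: the inclusion $\ld(\valpha) \subseteq R$ via convexity of $R$ and containment of the skeleton, and the reverse inclusion via the reach-function comparison using \autoref{prp:suppconju} and \autoref{prp:reachofld}. The only cosmetic difference is that you establish the key inequality $\dpr{\vxs}{\int_S f \,\di\valpha} \leq \dpr{\vxs}{\valpha(\Diamu{\geq})}$ directly by the pointwise ``bang-bang'' estimate, whereas the paper cites \autoref{prp:xstarDmuintfcomparingzerosigned}, whose proof is exactly that estimate.
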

\begin{proof}
    Let \phantom{aaaaaaaaaaaaaaaaaaaaaaaaaaaaaaaaa}
    $$
    V = \Big\{\int_S f \,\di\valpha \,:\, f: S \to [0, 1] \text{ is measurable}\Big\}
    $$
    for short.
    It is easy to see that $V$ is convex and that $\lsk(\valpha) \subseteq V$, so that $\ld(\valpha) \subseteq V$.
    For the reverse containment, it suffices to show that
    $$
    \dual{\ld(\valpha)}{\vxs} \geq \dual{V}{\vxs}
    $$
    for all $\vxs \in \Sph$,
    so that $\cls{V} \subseteq \cls{\ld(\valpha)}$ by \autoref{prp:suppconju},
    from which
    $V \subseteq \cls{V} \subseteq \ld(\valpha)$ since $\ld(\alpha)$ is closed.
    In turn, to establish the inequality, by \autoref{prp:reachofld} it suffices to show that
    $$
    \dpr{\vxs}{\valpha(\Diaa{\geq})} \geq \dpr{\vxs}{\int_S f \,\di\valpha}
    $$
    for all $\vxs \in \Sph$ and all measurable $f: S \to [0, 1]$.
    However, this is easily implied by \autoref{prp:xstarDmuintfcomparingzerosigned}
    since $f \geq 0$, $1 - f \geq 0$ on $S$.
\end{proof}

\begin{corollary}\label{col:discretediagram}
    Let $\valpha$ be an $n$-dimensional finite signed measure on $(S, \F)$
    where $\F = \sigma(\A)$ for $\A = \{A_1, \ldots, A_m\}$ a disjoint partition of $S$.
    Then
    $$
    \ld(\valpha) = \Big\{\sum_{i=1}^{m} \lam_i\valpha(A_i) \,:\, \lam_i \in [0, 1], 1 \leq i \leq m\Big\}.
    $$
\end{corollary}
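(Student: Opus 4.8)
The plan is to derive this immediately from \autoref{prp:ldavhull}, which already expresses $\ld(\valpha)$ as the set of integrals $\int_S f \,\di\valpha$ ranging over measurable $f: S \to [0, 1]$. The only additional ingredients are a description of which functions are measurable with respect to the finite $\sigma$-algebra $\F = \sigma(\A)$, together with an evaluation of the corresponding integral; both are elementary, so the corollary really is a corollary.

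First I would record the structural fact that, since $\A = \{A_1, \ldots, A_m\}$ is a disjoint partition of $S$, the generated $\sigma$-algebra $\F = \sigma(\A)$ consists precisely of the unions of blocks $A_i$ (every element of $\F$ either contains a given block entirely or is disjoint from it). Consequently a function $f: S \to [0, 1]$ is $\F$-measurable if and only if it is constant on each block $A_i$. Indeed, if $f$ took two distinct values $a \ne b$ on some $A_i$, then the preimage $f^{-1}(\{a\})$ of the Borel singleton $\{a\}$ would meet $A_i$ without containing it, hence could not be a union of blocks and would fail to lie in $\F$; conversely, any block-constant $f$ has every preimage $f^{-1}(B)$ equal to a union of blocks. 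Writing $\lam_i \in [0, 1]$ for the value of $f$ on $A_i$, the assignment $f \mapsto (\lam_1, \ldots, \lam_m)$ is therefore a bijection between the $\F$-measurable functions $f: S \to [0, 1]$ and the cube $[0, 1]^m$.

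Next I would evaluate the integral. For a block-constant $f$ with values $\lam_i$, additivity of the integral over the disjoint blocks $A_1, \ldots, A_m$ covering $S$, together with the elementary identity $\int_{A_i} \lam_i \,\di\valpha = \lam_i \valpha(A_i)$ that is immediate from the definition of the integral against a vector measure in \eqref{eqn:intonvecmeasures} applied to the integrand $\lam_i \indi{A_i}$, gives
$$
\int_S f \,\di\valpha = \sum_{i=1}^m \int_{A_i} \lam_i \,\di\valpha = \sum_{i=1}^m \lam_i \valpha(A_i).
$$
Combining this computation with the bijection of the previous paragraph and with \autoref{prp:ldavhull} then yields exactly the claimed equality, since as $f$ ranges over all $\F$-measurable $[0, 1]$-valued functions the tuple $(\lam_1, \ldots, \lam_m)$ ranges over all of $[0, 1]^m$.

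There is no genuine obstacle here: the entire analytic content sits in \autoref{prp:ldavhull}, and what remains is the routine measurability characterization above. The only point deserving a word of care is that empty or $\valpha$-null blocks cause no trouble, as each simply contributes a term $\lam_i \valpha(A_i) = 0$ for every choice of $\lam_i$, so such blocks neither add nor remove points from the right-hand set.
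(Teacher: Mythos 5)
Your proposal is correct and follows exactly the route the paper intends: the paper states this corollary without proof, treating it as an immediate consequence of \autoref{prp:ldavhull}, and your argument supplies precisely the routine details (block-constancy of $\F$-measurable functions and the evaluation $\int_S f \,\di\valpha = \sum_i \lam_i \valpha(A_i)$) that make that deduction explicit. No gaps; the remark about null or empty blocks is a sensible precaution.
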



\noindent
A \emph{zonoid} is the range of a non-atomic $n$-dimensional finite signed measure,
and as discussed at the beginning of \autoref{sec:lorenzhulls}, it is also a Lorenz hull.
For completeness, we show that the reverse statement also holds.\footnote{A very brief proof
can be found in Bolker \cite{Bolker1969Class} (Theorem 1.6). We hereby present one with details enough to our own satisfaction.}

%

\begin{proposition}\label{prp:ldiszonoid}
    For any $n$-dimensional finite signed measure $\valpha$ on $(S, \F)$,
    there exists $(S', \F')$ and non-atomic $n$-dimensional finite signed measure
    $\valpha'$ on $(S', \F')$ such that $\lsk(\valpha') = \ld(\valpha)$.
\end{proposition}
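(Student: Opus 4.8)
The plan is to realize $\ld(\valpha)$ as the range of a non-atomic measure obtained by ``spreading out'' $\valpha$ over an auxiliary continuous coordinate. Concretely, I would set $(S', \F') = (S, \F) \times ([0,1], \B)$, write $\mu = |\valpha|$, let $\lambda$ be Lebesgue measure on $[0,1]$, and define the $n$-dimensional finite signed measure $\valpha'$ on $(S', \F')$ by
$$
\valpha'(E) = \int_E \vdaa(s) \,(\mu \times \lambda)(\di(s,x)), \qquad E \in \F',
$$
i.e. $\valpha'$ has Radon--Nikodym derivative $\vdaa(s)$ (depending on $s$ only) with respect to $\mu \times \lambda$. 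Since $\big|\lm{\alpha_i}{\mu}\big| \le 1$ $\mu$-a.e. by \autoref{prp:aboutvdaa} and $\mu \times \lambda$ is finite, each component $\alpha_i'$ has finite total mass and is thus a well-defined finite signed measure; moreover $\valpha'(A \times [0,1]) = \valpha(A)$ for $A \in \F$, recording that $\valpha'$ genuinely extends $\valpha$.

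The crux is to check that $\valpha'$ is non-atomic; everything else is bookkeeping. By property (i) of \autoref{prp:atomsofvecmeasures} it suffices to show each $|\alpha_i'|$ has no atoms, and by \autoref{lem:tvisintegralofab} one has $|\alpha_i'|(E) = \int_E \big|\lm{\alpha_i}{\mu}(s)\big| \,(\mu \times \lambda)(\di(s,x))$. Suppose, for contradiction, that $E$ is an atom of $|\alpha_i'|$ with $|\alpha_i'|(E) = v > 0$, and consider the slicing $c \mapsto \phi(c) := |\alpha_i'|\big(E \cap (S \times [0, c))\big)$ for $c \in [0,1]$. Rewriting $\phi$ by Fubini (\autoref{lem:fubini}) as the integral over $S$ of $\big|\lm{\alpha_i}{\mu}(s)\big|$ against $\lambda$ of the $s$-slice of $E$ truncated at $c$, the function $\phi$ is nondecreasing with $\phi(0) = 0$, $\phi(1) = v$, and is continuous by the dominated convergence theorem (\autoref{lem:domconverge}), the dominating function being the integrable $\big|\lm{\alpha_i}{\mu}\big|$. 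The intermediate value theorem then yields a $c$ with $0 < \phi(c) < v$, contradicting the defining property of an atom. Hence each $|\alpha_i'|$, and so each $\alpha_i'$, is non-atomic, i.e. $\valpha'$ is non-atomic.

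It remains to identify the range. Since $\valpha'$ is non-atomic, $\lsk(\valpha') = \ld(\valpha')$ by the Lyapunov/Halmos facts recalled at the start of \autoref{sec:lorenzhulls}, so it is enough to prove $\ld(\valpha') = \ld(\valpha)$. I would apply \autoref{prp:ldavhull} to $\valpha'$ and compute, for measurable $g : S' \to [0,1]$,
$$
\int_{S'} g \,\di\valpha' = \int_{S \times [0,1]} g(s,x)\,\vdaa(s)\,(\mu \times \lambda)(\di(s,x)) = \int_S \tilde g \,\di\valpha, \qquad \tilde g(s) := \int_0^1 g(s,x)\,\di x,
$$
the middle step being Fubini (\autoref{lem:fubini}) together with the definition of integration against $\valpha$, and $\tilde g$ being a measurable $[0,1]$-valued function. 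As $g$ ranges over measurable $[0,1]$-valued functions on $S'$, the average $\tilde g$ ranges over all measurable $[0,1]$-valued functions on $S$ (surjectivity is witnessed by $g(s,x) = f(s)$, for which $\tilde g = f$), so \autoref{prp:ldavhull} gives $\ld(\valpha') = \{\int_S \tilde g \,\di\valpha\} = \ld(\valpha)$. Combining, $\lsk(\valpha') = \ld(\valpha') = \ld(\valpha)$, as required. The only genuinely delicate point is the non-atomicity argument; the remainder is a routine Fubini computation.
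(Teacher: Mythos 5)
Your proposal is correct, but it takes a genuinely different route from the paper. The paper splits $\valpha$ into its purely atomic and non-atomic parts, smears each atom $A_i$ onto the interval $(i-1,i]$ via Lebesgue measure scaled by $\valpha(A_i)$, and reassembles with the direct sum: $\valpha' = \vbeta \oplus \valpha_{|B}$, invoking \autoref{prp:ldsum}, \autoref{prp:ldavhull}, and Lyapunov; non-atomicity there is immediate by construction, and the work goes into computing $\lsk(\vbeta) = \ld(\valpha_{|A})$ and handling countably many atoms. You instead avoid any decomposition by forming the product of $\valpha$ with Lebesgue measure on $[0,1]$ --- note that your $\valpha'$ is exactly $\valpha \times \vlambda$ with $\vlambda = (\lambda, \dots, \lambda)$, i.e.\ the Lorenz product of $\valpha$ with the multiplicative identity hull $\convhull(\{\mathbf{0},\mathbf{1}\})$, so the hull identification could even have been cited from \autoref{col:proddeterndsigned} and the identity-element remark in the introduction, though your direct Fubini computation via \autoref{prp:ldavhull} keeps the argument self-contained. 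The trade-off is that the delicate step migrates: in your version one must \emph{prove} non-atomicity of the product, and your slicing argument ($\phi(c) = |\alpha_i'|(E \cap (S \times [0,c)))$ is continuous by dominated convergence, forcing by the intermediate value theorem a value strictly between $0$ and $|\alpha_i'|(E)$, contradicting the atom property) does this correctly, using \autoref{lem:tvisintegralofab} and property (i) of \autoref{prp:atomsofvecmeasures} to reduce to the total variations. Your construction is arguably more uniform --- no case analysis, no direct-sum bookkeeping, no countable-atom enumeration --- while the paper's has the virtue that non-atomicity costs nothing and the original non-atomic part of the measure is left untouched.
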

\begin{proof}
    Recalling the discussion following \autoref{prp:atomsofvecmeasures},
    let $\{A_i\}_{i \in \nn}$ be a chosen collection of atoms
    with respect to the purely atomic part $\valpha_{|A}$ of $\valpha$,
    and let $\valpha_{|B}$ denote the non-atomic part of $\valpha$.

    Consider $A' = (0, \infty)$ and $\B$ the Borel $\sigma$-algebra on $A'$.
    Let $\vbeta$ be the non-atomic finite vector measure on $(A', \B)$ such that
    $\vbeta(E) = \mu(E)\valpha(A_i)$ for all Borel subsets $E \subseteq (i-1, i]$,
    where $\mu$ is the Lebesgue measure on Borel subsets of $\rr$, for all $i \in \nn$.
    Then
    \begin{align*}
        \lsk(\vbeta)
        & = \{\vbeta(E) \,:\, E \in \B\} \\
        & = \Big\{\sum_{i=1}^{\infty} \vbeta(E \cap (i - 1, i]) \,:\, E \in \B\Big\} \\
        & = \Big\{\sum_{i=1}^{\infty} \lam_i\valpha(A_i) \,:\, \lam_i \in [0, 1], i \in \nn\Big\}.
    \end{align*}
    On the other hand,
    \begin{align*}
        \ld(\valpha_{|A})
        & = \Big\{\int_A f \,\di\valpha \,:\, f: S \to [0, 1] \text{ is measurable}\Big\} \\
        & = \Big\{\sum_{i=1}^{\infty} \int_{A_i} f \,\di\valpha \,:\, f: S \to [0, 1] \text{ is measurable}\Big\} \\
        & = \Big\{\sum_{i=1}^{\infty} \lam_i\valpha(A_i) \,:\, \lam_i \in [0, 1], i \in \nn\Big\},
    \end{align*}
    where the first equality follows by \autoref{prp:ldavhull},
    where the second equality follows since $|\valpha|(\bigcup_{i=n}^\infty A_i) \to 0$ as $n \to \infty$,
    and where the third equality follows since $f$ is $|\valpha|$-almost constant on each atom $A_i$.
    Thus $\lsk(\vbeta) = \ld(\valpha_{|A})$.

    Let $\valpha' = \vbeta \oplus \valpha_{|B}$. Then $\valpha'$ is non-atomic and
    $$
    \lsk(\valpha') = \lsk(\vbeta) + \lsk(\valpha_{|B}) = \ld(\valpha_{|A}) + \ld(\valpha_{|B}) = \ld(\valpha),
    $$
    where the first and last equality follow by \autoref{prp:ldsum}.
\end{proof}

\bibliographystyle{plain}

\end{spacing}
\end{document}